\newcounter{intro}[section]
\newtheorem{theorem}[equation]{Theorem}
\newtheorem{lemma}[equation]{Lemma}
\newtheorem{proposition}[equation]{Proposition}
\theoremstyle{definition}
\newtheorem{example}[equation]{Example}
\newtheorem{remark}[equation]{Remark}
\newtheorem*{Result}{Main Result}
\numberwithin{equation}{section}
\def\moverlay{\mathpalette\mov@rlay}
\def\mov@rlay#1#2{\leavevmode\vtop{%
   \baselineskip\z@skip \lineskiplimit-\maxdimen
   \ialign{\hfil$\m@th#1##$\hfil\cr#2\crcr}}}
\newcommand{\charfusion}[3][\mathord]{
    #1{\ifx#1\mathop\vphantom{#2}\fi
        \mathpalette\mov@rlay{#2\cr#3}
      }
    \ifx#1\mathop\expandafter\displaylimits\fi}
\newcommand*\bigcdot{\mathpalette\bigcdot@{.5}}
\newcommand*\bigcdot@[2]{\mathbin{\vcenter{\hbox{\scalebox{#2}{$\m@th#1\bullet$}}}}}
\newcommand*{\medcap}{\mathbin{\raisebox{1.8bp}{\resizebox{12bp}{8bp}{\ensuremath{\bigcap}}}}}
\newcommand{\dcap}{\charfusion[\mathbin]{\medcap}{*}}
\newcommand{\R}{\mathbb{R}}
\newcommand{\Z}{\mathbb{Z}}
\newcommand{\cF}{\mathcal{F}}
\newcommand{\Vol}{\mathrm{Vol}}
\newcommand{\vol}{\mathrm{vol}}
\newcommand{\cone}{\mathrm{cone}}
\newcommand{\rk}{\mathrm{rk}}
\newcommand{\I}{\mathcal{I}}
\newcommand{\J}{\mathcal{J}}
\newcommand{\M}{\mathcal{M}}
\newcommand{\cN}{\mathcal{N}}
\newcommand{\Inn}{\mathrm{Inn}}
\newcommand{\conv}{\mathrm{conv}}
\renewcommand{\phi}{\varphi}
\newcommand{\sM}{\mathsf{M}}
\newcommand{\cL}{\mathcal{L}}
\newcommand{\cl}{\mathrm{cl}}
\newcommand{\cG}{\mathcal{G}}
\newcommand{\cC}{\mathrm{Cub}}
\newcommand{\X}{\mathcal{X}}
\begin{document}
\DeclareRobustCommand{\subtitle}[1]{\\#1}
\title{{Tropical fans and normal complexes} \subtitle{\vspace{.25cm}\small{\normalfont{\emph{(Putting the ``volume'' back in ``volume polynomials'')}}}}}
\author[A.~Nathanson]{Anastasia Nathanson}
\address{Department of Mathematics, University of Minnesota}
\email{natha129@umn.edu}
\author[D.~Ross]{Dustin Ross}
\address{Department of Mathematics, San Francisco State University}
\email{rossd@sfsu.edu}

\begin{abstract}
Associated to any divisor in the Chow ring of a simplicial tropical fan, we construct a family of polytopal complexes, called normal complexes, which we propose as an analogue of the well-studied notion of normal polytopes from the setting of complete fans. We describe certain closed convex polyhedral cones of divisors for which the ``volume'' of each divisor in the cone---that is, the degree of its top power---is equal to the volume of the associated normal complexes. For the Bergman fan of any matroid with building set, we prove that there exists an open family of such cones of divisors with nonempty interiors. We view the theory of normal complexes developed in this paper as a polytopal model underlying the combinatorial Hodge theory pioneered by Adiprasito, Huh, and Katz.
\end{abstract}

\maketitle
\renewcommand{\subtitle}[1]{}

\vspace{-.5cm}
\section{Introduction}

In recent years, a compelling story has been unfolding wherein the main characters are special classes of noncomplete toric varieties masquerading as if they were smooth projective varieties. A notable plot point in this story is the work of Adiprasito, Huh, and Katz \cite{AHK}, who showed that Chow rings of noncomplete Bergman fans of matroids satisfy an analogue of the K\"ahler package. Their result has had significant impacts in combinatorics, solving decades-old log-concavity conjectures of Heron, Rota, and Welsch \cite{Rota,Heron,Welsh}, and it has led to a flurry of activity in ``combinatorial Hodge theory'' (see \cite{BES, BHMPW1, BHMPW2, ADH, AP1, AP2}, for example). 

Another combinatorial setting in which an analogue of the K\"ahler package arises is the polytope algebra of McMullen \cite{McMullen1}. For simple polytopes, McMullen's polytope algebra is isomorphic to the Chow ring of the corresponding projective toric variety \cite{McMullen2}, so one can view the polytope algebra as a type of polytopal model that underlies the algebro-geometric Hodge theory of projective toric varieties. Adiprasito, Huh, and Katz remark in \cite{AHK} that their proof of the K\"ahler package for general matroids was ``inspired by'' McMullen's proof of the analogous facts for polytope algebras, and this raises the question: \emph{Does there exist a polytopal model associated to Bergman fans of matroids that underlies the combinatorial Hodge theory developed by Adiprasito, Huh, and Katz?}

This paper introduces a new character to this story that we propose as the natural building block of a polytopal model for studying Chow rings of \emph{simplicial tropical fans}---a class of fans satisfying a weighted balancing condition and containing all Bergman fans of matroids. The new character that we introduce is the \emph{normal complex}, a polytopal complex associated to a noncomplete fan that generalizes the concept of normal polytopes associated to complete fans. \emph{The main result of this paper is that the degree of the top power of certain divisors in the Chow ring of a simplicial tropical fan is equal to the volume of the associated normal complex}, which is an analogue of a fundamental result in toric geometry regarding normal polytopes of complete fans. 

We view our result as a means by which one can import volume-theoretic tools and insights from polytopal geometry into the study of Chow rings of tropical fans. As an extension and application of these ideas, we mention that a recent paper of Lauren Nowak, Patrick O'Melveny, and the second author \cite{MNR} develops the theory of mixed volumes of normal complexes and proves an analogue of the Alexandrov--Fenchel inequalities in the normal complex setting; it turns out that the celebrated log-concavity of characteristic polynomials of matroids is then just a special case of these inequalities. 

The rest of the introduction gives an overview of the developments of this paper; we refer the reader to Section~\ref{sec:normalcomplexes} for precise definitions and a comprehensive discussion of these ideas. 

\subsection{Summary of results}

Let $\Sigma\subseteq N_\R$ be a simplicial tropical fan of dimension $d$ with associated degree function $\deg_\Sigma:A^d(\Sigma)\rightarrow\R$, and let $*\in\Inn(N_\R)$ be an inner product. Associated to $(\Sigma,*)$ and any value $z\in\R^{\Sigma(1)}$, we introduce a polytopal complex $C_{\Sigma,*}(z)$, called the \emph{normal complex}, which is obtained by truncating the cones of $\Sigma$ with hyperplanes that are normal to each ray---where ``normality'' is determined by $*$---and located a distance from the origin determined by $z$. The \emph{volume} of a normal complex, denoted $\Vol(C_{\Sigma,*}(z))$, is the sum of the volumes of its constituent $d$-dimensional polytopes.\footnote{We note that here and throughout the entire paper, volume will always be computed as simplicial volume, which is normalized so that a unit simplex has volume one.} There is a closed convex polyhedral cone $\overline \cC(\Sigma,*)\subseteq \R^{\Sigma(1)}$ comprised of $z$-values for which the truncating hyperplanes associated to the rays of each cone in $\Sigma$ intersect within that cone; we call these values \emph{pseudo-cubical}. Our main result can be stated as follows.

\begin{Result}
For each pseudo-cubical value $z\in\overline\cC(\Sigma,*)$, we have
\[
\deg_\Sigma(D^d)=\Vol(C_{\Sigma,*}(z)),
\] 
where $D\in A^1(\Sigma)$ is the divisor associated to $z$ under the quotient map $\R^{\Sigma(1)}\rightarrow A^1(\Sigma)$.
\end{Result}

We note that functions on divisors of the form $D\mapsto \deg(D^d)$ arise often in algebraic geometry, and they are generally called \emph{volume polynomials}. The terminology ``volume'' is motivated by the classical fact that, when $D$ is an ample divisor on a complex projective variety $\X$ of dimension $d$, the quantity $\deg(D^d)$ is the volume of $\X$ with respect to the K\"ahler metric associated to $D$. The term ``volume'' has also been proven apt in other ways; for example, in the setting of smooth complete toric varieties, the volume polynomial measures volumes of normal polytopes associated to nef divisors, and more generally, for smooth complete varieties that are not necessarily toric, the volume polynomial measures volumes of Newton--Okounkov bodies. For tropical fans, the (nontrivial) existence of a degree map allows us to define volume polynomials in an analogous way, but given that the most interesting tropical fans are not complete, none of the previously-studied volume-theoretic interpretations for volume polynomials are valid. Thus, the main result above may be viewed as a way of putting the ``volume'' back in ``volume polynomials'' of tropical fans.

\subsubsection{The construction of normal complexes}

We now outline the construction of normal complexes, which is closely related to and inspired by the construction of normal polytopes of complete fans. Let $\Sigma\subseteq N_\R$ be a simplicial fan of dimension $d$ and for each ray $\rho\in\Sigma(1)$, let $u_\rho\in N_\R$ be a distinguished ray generator. When $\Sigma$ is rational with respect to a lattice $N\subseteq N_\R$, we take $u_\rho\in N$ to be the primitive integral generator of $\rho$, but we do not generally assume that $\Sigma$ is rational. Given a divisor $D\in A^1(\Sigma)$, we can write $D$ (nonuniquely) as
\[
D=\sum_{\rho\in\Sigma(1)}z_\rho X_\rho
\]
where the sum is over the rays of $\Sigma$, each $X_\rho\in A^1(\Sigma)$ denotes the generator of the Chow ring associated to $\rho$, and the coefficients $z_\rho$ are real numbers. Each ray $\rho$ and coefficient $z_\rho$ corresponds to a half-space in the dual vector space $M_\R=N_\R^\vee$, defined by
\[
\{v\in M_\R\;|\;\langle v, u_\rho\rangle\leq z_\rho\}\subseteq M_\R,
\]
and the \emph{normal polyhedron} of $\Sigma$ associated to a choice of $z$-coefficients, denoted $P_\Sigma(z)\subseteq M_\R$, is the intersection of these half-spaces. Different choices of $z$-coefficients for the same divisor $D$ correspond to different translations of $P_\Sigma(z)$.

If $\Sigma$ is rational and complete, then $P_\Sigma(z)$ is the well-studied normal polytope of $D$, defined up to translation, and a fundamental result in toric geometry asserts that, when $D$ is nef, the volume of $P_\Sigma(z)$ is equal to $\deg_\Sigma(D^d)$. If $\Sigma$ is not complete, then there are two problems with this approach of simply computing the volume of $P_\Sigma(z)$:
\begin{enumerate}
\item the polyhedron $P_\Sigma(z)$ may be unbounded, so its volume may be infinite; and
\item even when $P_\Sigma(z)$ is bounded, its dimension is generally larger than $d$, so its volume---as a polynomial in $z$---will have degree larger than the volume polynomial.
\end{enumerate}
The construction of normal complexes, which requires the additional choice of an inner product $*\in\Inn(N_\R)$, remedies both of these issues. 

Given an inner product $*\in\Inn(N_\R)$, the \emph{normal complex} of $(\Sigma,*)$ with respect to a choice of coefficients $z\in\R^{\Sigma(1)}$ can be defined as
\[
C_{\Sigma,*}(z)=\Sigma\dcap P_\Sigma(z)\subseteq N_\R,
\]
where the notation $\dcap$ means that we take the intersection in $N_\R$ after using the inner product to identify $M_\R$ with $N_\R$.\footnote{Our definition of normal complexes in Section~\ref{sec:normalcomplexes} is slightly more technical than the one here, but this definition captures the intuitive idea and coincides with the proper definition for many values of $z$ and $*$.} As a polytopal complex, $C_{\Sigma,*}(z)$ can be thought of intuitively as a truncation of the cones of $\Sigma$ by normal hyperplanes; normality is determined by $*$ and the location of the hyperplanes is determined by $z$. While the shape of the normal complex and the volume of each of its constituent polytopes depend heavily on $*$ and the choice of $z$-coefficients for a given divisor $D$, a truly remarkable consequence of the main result above is that, so long as $\Sigma$ is tropical and $z$ is psuedo-cubical with respect to $(\Sigma,*)$, the total volume $\Vol(C_{\Sigma,*}(z))$ is independent of these choices and equal to $\deg_\Sigma(D^d)$. 

\subsubsection{Matroids and the pseudo-cubical hypothesis}

As was mentioned above, the psuedo-cubical hypothesis is the condition that the truncating hyperplanes associated to the rays of each cone in $\Sigma$ intersect within that cone. This condition is rather restrictive, and it is not clear from the outset whether the hypothesis of the main result above is nonvacuous for any interesting classes of tropical fans. To address this issue, we prove in Section~\ref{sec:matroids} that, if $\Sigma_{\sM,\cG}$ is the Bergman fan of a matroid $\sM$ with respect to any building set $\cG$, there is a nonempty open set in $\Inn(N_\R)$ for which the pseudo-cubical cone $\overline\cC(\Sigma_{\sM,\cG},*)$ has nonempty interior. This provides a large class of fans---fans that are of interest to both combinatorialists and algebraic geometers---for which the volume polynomial and the Chow ring can be studied using volume-theoretic tools from polytopal geometry. In particular, this class of Chow rings includes all Chow rings of wonderful compactifications associated to hyperplane arrangements \cite{DP1,DP2}, and the main result above sheds new light on the intersection theory of fundamental varieties in algebraic geometry, such as the moduli spaces $\overline\M_{0,n}$ of rational stable curves.

\subsection{Acknowledgements}

The authors are grateful to Federico Ardila, Matthias Beck, Emily Clader, Chris Eur, Serkan Ho\c{s}ten, and Leonid Monin for enlightening conversations related to this project. The second author would also like to thank Bernd Sturmfels who, while visiting San Francisco State University several years ago, posed the rather open-ended question: ``Can one describe the volume polynomial of $\overline\M_{0,n}$?" While there are many possible ways to answer this question, this work represents our favorite description (so far).

The second author was supported by a San Francisco State University Presidential Award during Fall 2020, and this research has also been supported by a grant from the National Science Foundation (RUI DMS-2001439).

\section{Normal complexes and their volumes}\label{sec:normalcomplexes}

In this section, we discuss the precise definitions, notations, motivations, and examples required for the development of normal complexes and their volumes. We view this section as an extended introduction that includes precise statements for all of the main results.

\subsection{Pure simplicial fans}

Let $M_\R$ and $N_\R$ be a dual pair of real vector spaces of dimension $n$, and denote the bilinear pairing by $\langle -, -\rangle$. Given a polyhedral fan $\Sigma\subseteq N_\R$, we denote the $k$-dimensional cones of $\Sigma$ by $\Sigma{(k)}$. Let $\preceq$ denote the face containment relation among the cones of $\Sigma$, and for each cone $\sigma\in\Sigma$, let $\sigma(k)\subseteq\Sigma(k)$ denote the $k$-dimensional faces of $\sigma$. For any cone $\sigma$ (or more generally, for any polyhedron $P$), let $\sigma^\circ$ (or $P^\circ$) denote the relative interior.

Henceforth, we adopt the convention that a \textbf{fan} $\Sigma\subseteq N_\R$ is a marked polyhedral fan, meaning that, in addition to specifying the polyhedral cones that comprise $\Sigma$, we have also chosen a distinguished generating vector $u_\rho\in \rho^\circ$ for each ray $\rho\in\Sigma(1)$. If $N\subseteq N_\R$ is a lattice---that is, a free abelian group such that $N_\R=N\otimes_\Z\R$---then we say that $\Sigma$ is \textbf{rational with respect to $N$} if each ray intersects the lattice at a nonzero vector. Given a fan $\Sigma\subseteq N_\R$ that is rational with respect to $N$, we always take $u_\rho$ to be the primitive integral generator of $\rho$---that is, $u_\rho$ is the first nonzero element of $N$ that lies on $\rho$. 

We say that a cone $\sigma$ is \textbf{simplicial} if $\dim(\sigma)=|\sigma(1)|$. Alternatively, simplicial cones are characterized by the property that their ray generators are linearly independent. Note that the faces of a simplicial cone $\sigma$ are in bijective correspondence with the subsets of $\sigma(1)$. For every face containment $\tau\preceq\sigma$ in a simplicial cone $\sigma$, let $\sigma\setminus\tau$ denote the face of $\sigma$ with rays $\sigma(1)\setminus\tau(1)$. If $\sigma$ is rational, then we say that $\sigma$ is \textbf{unimodular} if the primitive integral generators of any cone can be extended to a basis of $N$. Note that unimodular cones are simplicial. We say that a fan $\Sigma$ is \textbf{simplicial} or \textbf{unimodular} if every cone of $\Sigma$ is simplicial or unimodular. Every rational polyhedral fan $\Sigma$ determines a normal toric variety $\X_\Sigma$, and this variety is smooth if and only if $\Sigma$ is unimodular and has at worst finite quotient singularities if and only if $\Sigma$ is simplicial.

We say that a fan $\Sigma$ is \textbf{pure} if all of the maximal cones in $\Sigma$ have the same dimension. Henceforth, we assume that all fans are pure and we use the term \textbf{$d$-fan} to refer to a pure fan of dimension $d$.

\subsection{Chow rings}

Given a simplicial fan $\Sigma\subseteq N_\R$, the \textbf{Chow ring of $\Sigma$} is defined by
\[
A^\bullet(\Sigma)=\frac{\R\big[x_\rho\;|\;\rho\in\Sigma{(1)}\big]}{\I+\J}
\]
where
\[
\I=\big\langle x_{\rho_1}\cdots x_{\rho_k}\;|\;\cone(\rho_1,\dots,\rho_k)\notin\Sigma\big\rangle\;\;\;\text{ and }\;\;\;\J=\bigg\langle \sum_{\rho\in\Sigma{(1)}}\langle v,u_\rho\rangle x_\rho\;\bigg|\;v\in M_\R\bigg\rangle.
\]
If $\Sigma$ is unimodular, we note that $A^\bullet(\Sigma)$ is the Chow ring (in the usual intersection-theoretic sense) of the toric variety $\X_\Sigma$ (\cite{Danilov,BDP,Brion}). As both $\I$ and $\J$ are homogeneous, the Chow ring $A^\bullet(\Sigma)$ is a graded ring, and we denote by $A^k(\Sigma)$ the subgroup of homogeneous elements of degree $k$. We denote the generators of $A^\bullet(\Sigma)$ by $X_\rho=[x_\rho]\in A^1(\Sigma)$, and for any $\sigma\in\Sigma(k)$, we define
\[
X_\sigma=\prod_{\rho\in\sigma(1)}X_\rho\in A^k(\Sigma).
\]

\subsection{A guiding light: complete unimodular fans and normal polytopes}

Assume that $\Sigma$ is a unimodular fan that is also complete, meaning that every element of $N_\R$ is in some cone of $\Sigma$. This latter condition is equivalent to the condition that the corresponding toric variety $\X_\Sigma$ is complete in the algebro-geometric sense. In this setting, the algebro-geometric degree map is a linear isomorphism
\[
\deg_{\Sigma}:A^n(\Sigma)\rightarrow \R
\]
that is uniquely determined by linearity and the property that $\deg_\Sigma(X_\sigma)=1$ for all $\sigma\in\Sigma{(n)}$. Using the degree map, the \textbf{volume polynomial of $\Sigma$} is defined as the polynomial function
\begin{align*}
\Vol_\Sigma: A^1(\Sigma)&\rightarrow\R\\
D&\mapsto \deg_\Sigma(D^n).
\end{align*}
By definition, each divisor can be written (nonuniquely) as 
\begin{equation}\label{eq:divisorexpression}
D=\sum_{\rho\in\Sigma(1)}z_\rho X_\rho,
\end{equation}
and we often use these linear generators to view $\Vol_\Sigma$ as a homogeneous polynomial of degree $n$ in the variables $\{z_\rho\;|\;\rho\in\Sigma(1)\}$. Although the definition of the volume polynomial given above is purely algebraic, it also has a geometric interpretation, as we now describe.

Given a divisor $D\in A^1(\Sigma)$, presented as in \eqref{eq:divisorexpression}, define the \textbf{normal polytope of $\Sigma$ with respect to $z$} by
\[
P_\Sigma(z)=\big\{v\in M_\R\;|\;\langle v,u_\rho\rangle\leq z_\rho\text{ for all }\rho\in\Sigma{(1)}\big\}\subseteq M_\R.
\]
It follows from the definition of $\J$ that different choices of $z$ for the same divisor $D$ correspond to different translates of the same polytope. Let
\[
\Vol:\{\text{polytopes in }M_\R\}\rightarrow \R_{\geq 0}
\]
be the volume function that is normalized so that any fundamental simplex associated to the lattice $M=N^\vee\subseteq M_\R$ has unit volume. The guiding light for our work stems from a fundamental result in toric geometry \cite[Theorem~13.4.3]{Toric}, which asserts that, given any divisor $D=\sum z_\rho X_\rho$ for which the normal fan of $P_{\Sigma}(z)$ is refined by $\Sigma$---these correspond to nef divisors---we have
\begin{equation}\label{eq:toricvolume}
\Vol_\Sigma(D)=\Vol(P_\Sigma(z)).
\end{equation}
This beautiful result for complete fans is the primary motivation for our developments in the noncomplete setting. As such, we find it instructive to work out \eqref{eq:toricvolume} in a concrete example.

\begin{example}\label{ex:normalpolytope}
Let $N=\Z^2$ and let $\Sigma$ be the complete fan in $N_\R=\R^2$ depicted below.
\begin{center}
\begin{tikzpicture}[scale=2]
\draw[draw=blue!10,fill=blue!10, fill opacity=.5]    (.17,.07) -- (.9,.07) -- (.9,.8);
\draw[draw=blue!10,fill=blue!10, fill opacity=.5]    (.07,-.07) -- (.9,-.07) -- (.9,-.9) -- (.07,-.9);
\draw[draw=blue!10,fill=blue!10, fill opacity=.5]    (-.07,-.17) -- (-.07,-.9) -- (-.8,-.9);
\draw[draw=blue!10,fill=blue!10, fill opacity=.5 ]    (-.17,-.07) -- (-.9,-.07) -- (-.9,-.8);
\draw[draw=blue!10,fill=blue!10, fill opacity=.5]    (-.07,.07) -- (-.9,.07) -- (-.9,.9) -- (-.07,.9);
\draw[draw=blue!10,fill=blue!10, fill opacity=.5]    (.07,.2) -- (.07,.9) -- (.8,.9);
\draw[->] (0,0) -- (1,0); 
\draw[->] (0,0) -- (1,1); 
\draw[->] (0,0) -- (0,1); 
\draw[->] (0,0) -- (-1,0); 
\draw[->] (0,0) -- (-1,-1); 
\draw[->] (0,0) -- (0,-1); 
\node[right] at (1,0) {$\rho_1$};
\node[right] at (1,1) {$\rho_{12}$};
\node[right] at (0,1) {$\rho_2$};
\node[left] at (-1,0) {$\rho_{02}$};
\node[right] at (-1,-1) {$\rho_0$};
\node[right] at (0,-1) {$\rho_{01}$};
\end{tikzpicture}
\end{center}
The Chow ring of $\Sigma$ is
\[
A^\bullet(\Sigma)=\frac{\R[x_0,x_1,x_2,x_{01},x_{02},x_{12}]}{\I+\J}
\]
where $\I$ and $\J$ are described above. It can be checked from the definitions that
\begin{itemize}
\item $\deg_\Sigma(X_iX_{jk})=1$ if $i\in\{j,k\}$;
\item $\deg_\Sigma(X_i^2)=\deg_\Sigma(X_{ij}^2)=-1$;
\item the degree of any other quadratic monomial in the generators is zero.
\end{itemize}
Therefore, the volume polynomial is given by the following formula:
\begin{align*}
\Vol_\Sigma(z)&=2(z_0z_{01}+z_0z_{02}+z_1z_{01}+z_1z_{12}+z_2z_{02}+z_2z_{12})-(z_0^2+z_1^2+z_2^2+z_{01}^2+z_{02}^2+z_{12}^2).
\end{align*}

Using the dot product to identify $M_\R=N_\R$, we can draw the normal polytope associated to any specific $z$-value. If we choose the $z$-value carefully, then the original fan is the normal fan of the polytope $P_\Sigma(z)$; such as in the example depicted below.
\begin{center}
\begin{tikzpicture}[scale=1]
\draw[thick,fill=green!20, fill opacity=.5] (1,2) -- (2.5,.5) -- (2.5,-2) -- (-1,-2) -- (-2,-1) -- (-2,2) -- (1,2);
\draw[->] (0,0) -- (1,0); 
\draw[->] (0,0) -- (1,1); 
\draw[->] (0,0) -- (0,1); 
\draw[->] (0,0) -- (-1,0); 
\draw[->] (0,0) -- (-1,-1); 
\draw[->] (0,0) -- (0,-1); 
\node[right] at (1,0) {$\rho_1$};
\node[right] at (1,1) {$\rho_{12}$};
\node[right] at (0,1) {$\rho_2$};
\node[left] at (-1,0) {$\rho_{02}$};
\node[right] at (-1,-1) {$\rho_0$};
\node[right] at (0,-1) {$\rho_{01}$};
\node[rotate=270] at (2.8,-.8) {$x=z_1$};
\node[rotate=-45] at (2.1,1.5) {$x+y=z_{12}$};
\node[] at (-.5,2.3) {$y=z_{2}$};
\node[rotate=90] at (-2.3,.5) {$-x=z_{02}$};
\node[rotate=-45] at (-1.7,-1.7) {$-x-y=z_{0}$};
\node[] at (1,-2.3) {$-y=z_{01}$};
\end{tikzpicture}
\end{center}
Computing the simplicial area of this hexagon in terms of the $z$-coefficients, the reader should readily recover the formula for $\Vol_\Sigma(z)$, given above, verifying Formula \eqref{eq:toricvolume} in this example.
\end{example}

In the previous example, we made a specific choice of inner product on $N_\R$ (the standard dot product) in order to identify the vector spaces $M_\R$ and $N_\R$, which allowed us to draw the fan $\Sigma$ and the normal polytope $P_\Sigma(z)$ in the same vector space. While it was helpful to choose this inner product in order to draw a picture of $P_\Sigma(z)$, we note that this choice was not necessary in order to define $P_\Sigma(z)\subseteq M_\R$ or $\Vol(P_\Sigma(z))$.  As we will see in the next subsection, the situation is quite different in the noncomplete setting. In particular, when $\Sigma$ is not complete, the choice of an inner product is an essential ingredient in both the construction of normal complexes---which are analogues of normal polytopes in the noncomplete setting---and in the definition of their volume. In order to discuss these ideas in more detail, we now turn toward a discussion of noncomplete fans and their associated normal complexes.

\subsection{Noncomplete fans and normal complexes}

In this subsection, we introduce an analogue of normal polytopes---which we refer to as normal complexes---in the setting of noncomplete simplicial fans. Assume that $\Sigma$ is a (not-necessarily complete) simplicial $d$-fan in $N_\R$, and choose an inner product $*\in\Inn(N_\R)$. Normal complexes will be defined as polytopal complexes in $N_\R$ that depend on $(\Sigma,*)$, as well as on a value $z\in\R^{\Sigma(1)}$. Before defining normal complexes, we must describe the individual polytopes that comprise these polytopal complexes. 

Given a cone $\sigma\in\Sigma$, consider the polyhedron
\[
P_\sigma(z)=\big\{v\in M_\R\;|\;\langle v,u_\rho\rangle\leq z_\rho\text{ for all }\rho\in\sigma{(1)}\big\}\subseteq M_\R.
\]
The choice of inner product allows us to identify $N_\R$ with $M_\R$ via the natural isomorphism
\begin{align*}
N_\R&\rightarrow M_\R\\
u&\mapsto (u'\in N_\R\mapsto u* u'\in\R),
\end{align*}
and using this identification, we define polytopes
\[
P_{\sigma,*}(z)=\sigma\dcap P_\sigma(z).
\]
where the notation $\dcap$ means that we are intersecting $\sigma\subseteq N_\R$ with $P_\sigma(z)\subseteq M_\R$ after identifying the vector spaces $M_\R$ and $N_\R$ via the inner product $*$, as above. More explicitly, we have
\[
P_{\sigma,*}(z)=\sigma\cap\big\{v\in N_\R\;|\;v* u_\rho\leq z_\rho\text{ for all }\rho\in\sigma{(1)}\big\}\subseteq N_\R.
\]
The next example depicts these polytopes for the case of the complete fan of Example~\ref{ex:normalpolytope}.

\begin{example}\label{ex:normalpolytope2}
Consider the fan in Example~\ref{ex:normalpolytope} and let $*$ be the standard dot product. If we choose the $z$-values carefully---for example, if we use the same $z$-values that were chosen to draw the image in Example~\ref{ex:normalpolytope}---then the polytopes $P_{\sigma,*}(z)$ (and their faces) form a polytopal complex, depicted below, consisting of six quadrilaterals and their faces. Furthermore, the support of this polytopal complex is nothing more than the normal polytope $P_\Sigma(z)$, viewed as a subset of $N_\R$. 

\begin{center}
\begin{tikzpicture}[scale=1]
\draw[thick,fill=green!20, fill opacity=.5] (1,2) -- (2.5,.5) -- (2.5,-2) -- (-1,-2) -- (-2,-1) -- (-2,2) -- (1,2);
\draw[thick] (0,0) -- (2.5,0); 
\draw[thick] (0,0) -- (1.5,1.5); 
\draw[thick] (0,0) -- (0,2); 
\draw[thick] (0,0) -- (-2,0); 
\draw[thick] (0,0) -- (-1.5,-1.5); 
\draw[thick] (0,0) -- (0,-2); 
\end{tikzpicture}
\end{center}
If we're not so careful in how we choose the $z$-values---for example, if we decrease the value of $z_1$---then the polytopes $P_{\sigma,*}(z)$ no longer meet along faces, as we've depicted below, and their union is no longer equal to the normal polytope. 

\begin{center}
\begin{tikzpicture}[scale=1]
\draw[thick,fill=green!20, fill opacity=.5] (1,2) -- (1.5,1.5) -- (1,1) -- (1,-2) -- (-1,-2) -- (-2,-1) -- (-2,2) -- (1,2);
\draw[thick] (0,0) -- (1,0); 
\draw[thick] (0,0) -- (1.5,1.5); 
\draw[thick] (0,0) -- (0,2); 
\draw[thick] (0,0) -- (-2,0); 
\draw[thick] (0,0) -- (-1.5,-1.5); 
\draw[thick] (0,0) -- (0,-2); 
\end{tikzpicture}
\end{center}

\end{example}

As the previous example illustrates, if we want to define a polytopal complex using the polytopes $P_{\sigma,*}(z)$, then we require an extra compatibility between the inner product and the $z$-values in order to ensure that the polytopes $P_{\sigma,*}(z)$ meet along faces; we now introduce such a condition. We say that the value $z\in\R^{\Sigma(1)}$ is \textbf{cubical with respect to $(\Sigma,*)$} if, for all $\sigma\in\Sigma$, we have
\[
\sigma^\circ\cap\big\{v\in N_\R\;|\;v* u_{\rho}= z_{\rho}\text{ for all }\rho\in\sigma(1)\big\}\neq\emptyset,
\]
and we say that $z\in\R^{\Sigma(1)}$ is \textbf{pseudo-cubical with respect to $(\Sigma,*)$} if, for all $\sigma\in\Sigma$, we have
\[
\sigma\cap\big\{v\in N_\R\;|\;v* u_{\rho}= z_{\rho}\text{ for all }\rho\in\sigma(1)\big\}\neq\emptyset.
\]
Note that, because $\Sigma$ is simplicial, the intersections in these definitions contain at most one vector. Below, we have depicted what it means for the intersecting hyperplanes  to be cubical, pseudo-cubical, and neither in the case of a two-dimensional cone.
\begin{center}
\begin{tikzpicture}[scale=3]
\draw[draw=blue!10,fill=blue!10, fill opacity=.5]    (.1,.04) -- (.95,.04) -- (.95,.89);
\draw[thick,fill=green!20, fill opacity=.5] (0,0) -- (0.8,0) -- (0.8,0.4) -- (0.6,0.6);
\draw[->] (0,0) -- (1,0); 
\draw[->] (0,0) -- (1,1); 
\node[right] at (1,0) {$\rho_1$};
\node[right] at (1,1) {$\rho_{2}$};
\draw[thick] (0.8,-0.1) -- (0.8,0.55);
\draw[thick] (0.4,0.8) -- (0.9,0.3);
\node[] at (0.8,0.4) {$\bullet$};
\node[] at (0.5,-.2) {cubical};
\draw[draw=blue!10,fill=blue!10, fill opacity=.5]    (2.1,.04) -- (2.95,.04) -- (2.95,.89);
\draw[thick,fill=green!20, fill opacity=.5] (2,0) -- (2.6,0) -- (2.6,0.6);
\draw[->] (2,0) -- (3,0); 
\draw[->] (2,0) -- (3,1); 
\node[right] at (3,0) {$\rho_1$};
\node[right] at (3,1) {$\rho_{2}$};
\draw[thick] (2.6,-0.1) -- (2.6,0.8);
\draw[thick] (2.4,0.8) -- (2.9,0.3);
\node[] at (2.6,0.6) {$\bullet$};
\node[] at (2.5,-.2) {pseudo-cubical};
\draw[draw=blue!10,fill=blue!10, fill opacity=.5]    (4.1,.04) -- (4.95,.04) -- (4.95,.89);
\draw[thick,fill=green!20, fill opacity=.5] (4,0) -- (4.5,0) -- (4.5,0.5);
\draw[->] (4,0) -- (5,0); 
\draw[->] (4,0) -- (5,1); 
\node[right] at (5,0) {$\rho_1$};
\node[right] at (5,1) {$\rho_{2}$};
\draw[thick] (4.5,-0.1) -- (4.5,0.9);
\draw[thick] (4.35,0.85) -- (4.9,0.3);
\node[] at (4.5,0.7) {$\bullet$};
\node[] at (4.5,-.2) {not pseudo-cubical};
\end{tikzpicture}
\end{center}
In the cubical case of the two-dimensional setting depicted above, notice that the polytope $P_{\sigma,*}(z)$ is combinatorially equivalent to a square. In higher dimensions, we will see in Proposition~\ref{prop:cube} that $P_{\sigma,*}(z)$ is always combinatorially equivalent to a cube when $z$ is cubical, justifying the terminology.

As we will see in Proposition~\ref{prop:cubecone}, the set of cubical values forms an open convex polyhedral cone $\cC(\Sigma,*)\subseteq\R^{\Sigma(1)}$ and the set of pseudo-cubical values forms a closed convex polyhedral cone $\overline\cC(\Sigma,*)\subseteq\R^{\Sigma(1)}$ whose interior is $\cC(\Sigma,*)$. In Section~\ref{sec:cubical}, we also prove that, when $z\in\overline\cC(\Sigma,*)$ is pseudo-cubical, the polytopes $P_{\sigma,*}(z)$ do, in fact, meet along faces, implying that the collection of these polytopes and their faces forms a polytopal complex (Proposition~\ref{prop:polycomplex}). For a polyhedron $P$, let $\widehat P$ denote the polyhedral complex comprising all faces of $P$. For any pseudo-cubical $z\in\overline\cC(\Sigma,*)$, define the \textbf{normal complex of $\Sigma$ with respect to $z$ and $*$} as the polytopal complex
\begin{equation}\label{eq:normalcomplex}
C_{\Sigma,*}(z)=\bigcup_{\sigma\in\Sigma}\widehat P_{\sigma,*}(z).
\end{equation}

The next example depicts a normal complex in the noncomplete setting.

\begin{example}\label{ex:balancedfan}
Let $N_\R=\R^3$ and let $u_1,u_2,u_3$ be the standard basis vectors of $\R^3$. Set $u_0=-(u_1+u_2+u_3)$ and, for any subset $S\subseteq\{0,1,2,3\}$, define $u_S=\sum_{i\in S}u_i$. Let $\rho_S$ denote the ray spanned by $u_S$ and let $\Sigma$ be the two-dimensional fan depicted in the image below (for notational simplicity, we omit set brackets and commas for subsets $S\subseteq\{0,1,2,3\}$).

\begin{center}
\tdplotsetmaincoords{68}{55}
\begin{tikzpicture}[scale=2,tdplot_main_coords]
\draw[draw=blue!20,fill=blue!20,fill opacity=0.5]  (0,0, 0)-- (1, 0, 0) -- (1, 1, 1) -- cycle;
\draw[draw=blue!20,fill=blue!20,fill opacity=0.5]  (0,0, 0)-- (0, 1, 0) -- (1, 1, 1) -- cycle;
\draw[draw=blue!20,fill=blue!20,fill opacity=0.5]  (0,0, 0)-- (0, 0, 1) -- (1, 1, 1) -- cycle;
\draw[draw=blue!20,fill=blue!20,fill opacity=0.5] (0,0, 0)-- (1, 0, 0) -- (0, -1, -1) -- cycle;
\draw[draw=blue!20,fill=blue!20,fill opacity=0.5] (0,0, 0)-- (0, 1, 0) -- (-1, 0, -1) -- cycle;
\draw[draw=blue!20,fill=blue!20,fill opacity=0.5] (0,0, 0)-- (0, 0, 1) -- (-1, -1, 0) -- cycle;
\draw[draw=blue!20,fill=blue!20,fill opacity=0.5] (0,0, 0)-- (-1, -1, 0) -- (-1, -1, -1) -- cycle;
\draw[draw=blue!20,fill=blue!20,fill opacity=0.5] (0,0, 0)-- (-1, 0, -1) -- (-1, -1, -1) -- cycle;
\draw[draw=blue!20,fill=blue!20,fill opacity=0.5] (0,0, 0)-- (0, -1, -1) -- (-1, -1, -1) -- cycle;
\draw[->] (0,0,0) -- (1,0,0);
\node[right] at (1,0,0) {$\rho_1$};
\draw[->,gray] (0,0,0) -- (0,1,0); 
\node[above,gray] at (0,0.97,0) {$\rho_2$};
\draw[->] (0,0,0) -- (0,0,1);
\node[above] at (0,0,1) {$\rho_3$};
\draw[->] (0,0,0) -- (1,1,1);
\node[right] at (1,1,1) {$\rho_{123}$};
\draw[->] (0,0,0) -- (-1,-1,-1);
\node[left] at (-1,-1,-1) {$\rho_0$};
\draw[->] (0,0,0) -- (-1,-1,0);
\node[left] at (-1,-1,0) {$\rho_{03}$};
\draw[->,gray] (0,0,0) -- (-1,0,-1); 
\node[left,gray] at (-1,0,-1) {$\rho_{02}$};
\draw[->] (0,0,0) -- (0,-1,-1); 
\node[below] at (0,-1,-1) {$\rho_{01}$};
\end{tikzpicture}
\end{center}
In order to construct normal complexes, we require an inner product---let $*$ be the standard dot product on $\R^3$. The image below gives one example of a normal complex $C_{\Sigma,*}(z)$ with respect to one particular cubical value $z\in\cC(\Sigma,*)$---it is comprised of nine quadrilaterals and their faces. 

\begin{center}
    \tdplotsetmaincoords{68}{55}
\begin{tikzpicture}[scale=1.3,tdplot_main_coords]

\draw[draw=green!20, fill=green!20, fill opacity=.5] (0,0,0) -- (0, 0, 1.6) -- (1, 1, 1.6) -- (1.2, 1.2, 1.2) -- cycle; 
\draw[draw=green!20, fill=green!20, fill opacity=.5] (0,0,0) -- (0, 1.6, 0) -- (1, 1.6, 1) -- (1.2, 1.2, 1.2) -- cycle; 
\draw[draw=green!20, fill=green!20, fill opacity=.5] (0,0,0) -- (1.6, 0, 0) -- (1.6, 1, 1) -- (1.2, 1.2, 1.2) -- cycle; 
\draw[draw=green!20, fill=green!20, fill opacity=.5] (0,0,0) -- (0, 0, 1.6) -- (-1.6, -1.6, 1.6) -- (-1.6, -1.6, 0) -- cycle; 
\draw[draw=green!20, fill=green!20, fill opacity=.5] (0,0,0) -- (0, 1.6, 0) -- (-1.6, 1.6, -1.6) -- (-1.6, 0, -1.6) -- cycle; 
\draw[draw=green!20, fill=green!20, fill opacity=.5] (0,0,0) -- (1.6, 0, 0) -- (1.6, -1.6, -1.6) -- (0, -1.6, -1.6) -- cycle; 
\draw[draw=green!20, fill=green!20, fill opacity=.5] (0,0,0) -- (-1.6, -1.6, 0) -- (-1.6, -1.6, -.4) -- (-1.2, -1.2, -1.2) -- cycle; 
\draw[draw=green!20, fill=green!20, fill opacity=.5] (0,0,0) -- (-1.6, 0, -1.6) -- (-1.6, -.4, -1.6) -- (-1.2, -1.2, -1.2) -- cycle; 
\draw[draw=green!20, fill=green!20, fill opacity=.5] (0,0,0) -- (0, -1.6, -1.6) -- (-.4, -1.6, -1.6) -- (-1.2, -1.2, -1.2) -- cycle; 

\draw[thick] (0, 0, 1.6) -- (1, 1, 1.6) -- (1.2, 1.2, 1.2);
\draw[dashed] (0, 1.6, 0) -- (1, 1.6, 1) -- (1.2, 1.2, 1.2);
\draw[thick] (.7, 1.6, .7) -- (1, 1.6, 1) -- (1.2, 1.2, 1.2);
\draw[thick] (1.6, 0, 0) -- (1.6, 1, 1) -- (1.2, 1.2, 1.2);
\draw[thick] (0, 0, 1.6)  -- (-1.6,  -1.6, 1.6)  -- (-1.6, -1.6, 0);
\draw[dashed] (0, 1.6, 0) -- (-1.6,  1.6, -1.6) -- (-1.6, 0, -1.6);
\draw[thick] (1.6, 0, 0) -- (1.6, -1.6, -1.6) -- (0, -1.6, -1.6);
\draw[thick] (-1.6, -1.6, 0) -- (-1.6, -1.6, -.4) -- (-1.2, -1.2, -1.2);
\draw[dashed] (-1.6, 0, -1.6) -- (-1.6, -.4, -1.6) -- (-1.2, -1.2, -1.2);
\draw[thick] (0, -1.6, -1.6) -- (-.4, -1.6, -1.6) -- (-1.2, -1.2, -1.2);

\draw[thick] (0, 0, 0) -- (1.2, 1.2, 1.2);
\draw[thick] (0, 0, 0) -- (0, 0, 1.6);
\draw[dashed] (0, 0, 0) -- (0, 1.6, 0);
\draw[thick] (0, 0, 0) -- (1.6, 0, 0);
\draw[thick] (0, 0, 0) -- (0, -1.6, -1.6);
\draw[dashed] (0, 0, 0) -- (-1.6, 0, -1.6);
\draw[thick] (0, 0, 0) -- (-1.6, -1.6, 0);
\draw[thick] (0, 0, 0) -- (-1.2, -1.2, -1.2);
\end{tikzpicture}
\end{center}
Changing the $z$-values corresponds to sliding the boundary components of the normal complex along the corresponding rays of $\Sigma$, and the cubical $z$-values correspond to those deformations for which the combinatorial structure of the polytopal complex is constant. 
\end{example}

\begin{remark}
As mentioned in the introduction, one could alternatively define the notion of a normal complex of $(\Sigma,*)$ with respect to $z$  as
\begin{equation}\label{eq:altnormalcomplex}
\Sigma\dcap P_\Sigma(z)
\end{equation}
where $P_\Sigma(z)$ is the polyhedron
\[
P_\Sigma(z)=\big\{v\in M_\R\;|\;\langle v,u_\rho\rangle\leq z_\rho\text{ for all }\rho\in\Sigma{(1)}\big\}\subseteq M_\R.
\]
This alternative definition certainly has advantages; for example, this approach does not require the pseudo-cubical condition as part of the definition and yields a polytopal complex for \emph{any} $z$-value. Moreover, in the setting of complete fans, the support of this polytopal complex can always be identified with the normal polytope, so \eqref{eq:altnormalcomplex} is a true generalization of normal polytopes to the noncomplete setting.

To justify why we have opted not to use this alternative approach, first observe that definitions \eqref{eq:normalcomplex} and \eqref{eq:altnormalcomplex} agree whenever $z\in\overline\cC(\Sigma,*)$ \emph{and} 
\begin{equation}\label{eq:altcond}
P_{\sigma,*}(z)\subseteq \big\{v\in N_\R\;|\;v* u_\rho\leq z_\rho\text{ for all }\rho\notin\sigma{(1)}\big\}.
\end{equation}
While the pseudo-cubical condition can be checked locally cone-by-cone, the extra condition \eqref{eq:altcond} is rather cumbersome to work with, requiring an understanding of the global geometry of $\Sigma$. The reason we have chosen to work with the slightly more technical definition \eqref{eq:normalcomplex} instead of the more straightforward definition \eqref{eq:altnormalcomplex} is essentially so we do not require the extra condition \eqref{eq:altcond} as a hypothesis for our results. If we include this hypothesis, then our results apply to both definitions, but using the approach in \eqref{eq:normalcomplex} allows us to prove these results for a more general set of $z$-values.
\end{remark}

\begin{remark}
For a given fan $\Sigma\subseteq N_\R$ with inner product $*\in\Inn(N_\R)$, it can be shown that every pseudo-cubical value gives rise to a \emph{convex} piecewise linear map on $\Sigma$, where convexity is in the sense of \cite[Definition~4.1]{AHK}. In particular, if $\Sigma$ is complete and unimodular, pseudo-cubical values give rise to nef divisors on the associated toric variety. On the other hand, it is not hard to find examples of complete, unimodular fans with a fixed inner product that admit nef divisors that cannot be represented by pseudo-cubical values. In other words, in the complete, unimodular setting, not every normal polytope can be represented as the support of a normal complex, so our results do not strictly generalize \eqref{eq:toricvolume}. However, the methods in this paper imply that our volume-theoretic interpretation of the volume polynomial can be extended to all $z$-values as long as one is willing to work with signed volumes of simplices, and it then follows from a recent result of Schneider \cite[Proposition~1]{Schneider2} that this more general interpretation does, indeed, generalize \eqref{eq:toricvolume} for all convex values.
\end{remark}

\subsection{Volumes of normal complexes}

We now discuss how to define volumes of normal complexes. As in the case of complete fans, we should normalize volumes of polytopes using dual lattices. However, since each polytope $P_{\sigma,*}(z)$ lies in a subspace of $N_\R$, some additional care must be taken in order to define the appropriate normalization. 

For each cone $\sigma\in\Sigma$, define the subgroup
\[
N_\sigma=\mathrm{span}_\Z(u_\rho\;|\;\rho\in\sigma(1))\subseteq N_\R,
\]
and let $M_\sigma$ denote the dual of $N_\sigma$. Using the inner product $*$, we can identify $M_{\sigma,\R}=M_\sigma\otimes\R$ with $N_{\sigma,\R}=N_\sigma\otimes\R$ and thus, we can view $M_\sigma$ as a lattice in $N_{\sigma,\R}$. For each $\sigma\in\Sigma$, let 
\[
\Vol_\sigma:\big\{\text{polytopes in }N_{\sigma,\R}\big\}\rightarrow\R_{\geq 0}
\]
be the volume function determined by the property that a fundamental simplex of the lattice $M_\sigma\subseteq N_{\sigma,\R}$ has unit volume, and define the \textbf{volume of the normal complex $C_{\Sigma,*}(z)$} as the sum of the volumes of the constituent $d$-dimensional polytopes:
\[
\Vol( C_{\Sigma,*}(z))=\sum_{\sigma\in\Sigma(d)}\Vol_\sigma(P_{\sigma,*}(z)).
\]
In slightly more generality, suppose that $\omega:\Sigma(d)\rightarrow \R_{>0}$ is a weight function on the maximal cones of $\Sigma$. The \textbf{volume of the normal complex $C_{\Sigma,*}(z)$ weighted by $\omega$} is defined by
\begin{equation}\label{eq:weightedvolume}
\Vol(C_{\Sigma,*}(z);\omega)=\sum_{\sigma\in\Sigma(d)}\omega(\sigma)\Vol_\sigma(P_{\sigma,*}(z)).
\end{equation}

One of our main results regarding normal complexes of general simplicial fans is an explicit computation of their volume. In Theorem~\ref{thm:volumecomputation}, we prove that, for every $z\in\overline\cC(\Sigma,*)$ and $\sigma\in\Sigma(k)$, we have
\begin{equation}\label{eq:geomvolumeformula}
\Vol_\sigma(P_{\sigma,*}(z))=\det(G_\sigma)\sum_{f\in L(\sigma)}\prod_{j=1}^k\frac{\det(G_{\sigma(f,j),f(j)}(z))}{\det(G_{\sigma(f,j)})}
\end{equation}
where the notation is defined as follows:
\begin{itemize}
\item for $\sigma\in\Sigma(k)$, the set $L(\sigma)$ is the set of bijections $f:\{1,\dots,k\}\rightarrow\sigma(1)$;
\item for $f\in L(\sigma)$ and $1\leq j\leq k$, the cone $\sigma(f,j)\preceq\sigma$ has rays indexed by $\{{f(i)}\;|\;i\leq j\}$;
\item the matrix $G_\sigma$ is defined by $G_\sigma=(u_\rho*u_\eta)_{\rho,\eta\in\sigma(1)}$;
\item the matrix $G_{\sigma,\rho}(z)$ is obtained by replacing the $\rho$th column of $G_\sigma$with $z_\sigma=(z_\eta)_{\eta\in\sigma(1)}$.
\end{itemize}
As we will see in Section~\ref{sec:computingvolumes}, this formula for $\Vol_\sigma(P_{\sigma,*}(z))$ follows from a specific triangulation of $P_{\sigma,*}(z)$ that we describe explicitly in Proposition~\ref{prop:triangulate}.

If $\Sigma$ happens to be a complete unimodular fan, then it is not hard to see from the definitions that volumes of normal complexes reduces to volumes of normal polytopes:
\[
\Vol(C_{\Sigma,*}(z))=\Vol(P_\Sigma(z)) 
\]
for all $z\in\overline\cC(\Sigma,*)$. In particular, $\Vol(C_{\Sigma,*}(z))$ is independent of the choice of inner product when $\Sigma$ is complete. When $\Sigma$ is not complete, however, one should not expect volumes of normal complexes to be independent of this choice. The next example illustrates how the choice of the inner product $*$ influences the shape of normal complexes as well as the computation of their volumes.

\begin{example}\label{ex:depends}
Let $\Sigma$ be the fan associated to the first quadrant in $N_\R=\R^2$:
\begin{center}
\begin{tikzpicture}[scale=3]
\draw[draw=blue!10,fill=blue!10, fill opacity=.5]    (.05,.05) -- (.95,.05) -- (.95,.95) -- (.05,.95);
\draw[->] (0,0) -- (1,0); 
\draw[->] (0,0) -- (0,1); 
\node[right] at (1,0) {$\rho_1$};
\node[left] at (0,1) {$\rho_2$};
\node[] at (0.5,0.5) {$\sigma$};
\node[] at (-.2,.5) {$\Sigma\;=$};
\end{tikzpicture}
\end{center}
Let $z=(z_1,z_2)=(2,2)$ and let $*=\bigcdot$ be the standard dot product. Then the polytope $P_{\sigma,\bigcdot}(2,2)$ is the $2\times 2$ square depicted in the image below.
\begin{center}
\begin{tikzpicture}[scale=1.5]
\draw[thick,fill=green!20, fill opacity=.5] (0,0) -- (2,0) -- (2,2) -- (0,2) -- (0,0);
\draw[dashed] (0,0) -- (1,0) -- (0,1) -- (0,0);
\node at (0,0) {$\bullet$};
\node at (1,0) {$\bullet$};
\node at (2,0) {$\bullet$};
\node at (0,1) {$\bullet$};
\node at (1,1) {$\bullet$};
\node at (2,1) {$\bullet$};
\node at (0,2) {$\bullet$};
\node at (1,2) {$\bullet$};
\node at (2,2) {$\bullet$};
\node[below] at (1,0) {$u_1$};
\node[left] at (0,1) {$u_2$};
\node[] at (-.12,-.12) {$0$};
\end{tikzpicture}
\end{center}
In this image, we have also included a part of the lattice $M_\sigma$, along with a fundamental simplex. From this picture, we see that $\Vol_{\sigma}(P_{\sigma,\bigcdot}(2,2))=8$. 

We could just as well choose a different inner product; for example, let us consider the inner product $*=\star$ defined by
\[
(a,b)\star(c,d)=4ac+ad+bc+2bd.
\]
Using the same choice $z=(z_1,z_2)=(2,2)$, we have depicted the polytope $P_{\sigma,\star}(2,2)$ below, along with a part of the lattice $M_\sigma$ and a fundamental simplex.
\begin{center}
\begin{tikzpicture}[scale=3]
\draw[thick,fill=green!20, fill opacity=.5] (0,0) -- (1/2,0) -- (2/7,6/7) -- (0,1) -- (0,0);
\draw[dashed] (0,0) -- (2/7,-1/7)  -- (1/7,3/7)  -- (0,0);
\node[below] at (1,0) {$u_1$};
\node[left] at (0,1) {$u_2$};
\node[] at (-.06,-.06) {$0$};
\node at (0,0) {$\bullet$};
\node at (0,1) {$\bullet$};
\node at (1,0) {$\bullet$};
\node at (-1/7,4/7) {$\bullet$};
\node at (1/7,3/7) {$\bullet$};
\node at (3/7,2/7) {$\bullet$};
\node at (5/7,1/7) {$\bullet$};
\node at (2/7,6/7) {$\bullet$};
\node at (4/7,5/7) {$\bullet$};
\node at (6/7,4/7) {$\bullet$};
\node at (-2/7,1/7) {$\bullet$};
\node at (2/7,-1/7) {$\bullet$};
\end{tikzpicture}
\end{center}
By chopping up the fundamental simplex and filling the polytope, we can see that 
\[
\Vol_{\sigma}(P_{\sigma,\star}(2,2))=5\neq 8= \Vol_{\sigma}(P_{\sigma,\bigcdot}(2,2)).
\]
Since $\Sigma$ contains just a single $2$-dimensional cone, we have $\Vol(C_{\Sigma,*}(z))=\Vol_{\sigma}(P_{\sigma,*}(z))$ for any $z$ and $*$, from which we see that the volumes of the normal complexes associated to this noncomplete fan $\Sigma$ depend in a nontrivial way on the choice of inner product.
\end{example}

Example~\ref{ex:depends} illustrates that $\Vol(C_{\Sigma,*}(z))$ depends nontrivially on the choice of $*$; however, one might be so optimistic as to hope that there is a nice family of noncomplete fans that shares a particular type of symmetry for which weighted volumes of normal complexes are independent of the choice of inner product. As we will see below, independence of $*$ will naturally and directly lead us to the concept of tropical fans.

\subsection{Square-free expressions}

The expression in the right-hand side of Equation~\eqref{eq:geomvolumeformula} also arises in a natural way when computing products of divisors in $A^\bullet(\Sigma)$. Suppose that $\Sigma\subseteq N_\R$ is a simplicial fan and, for all $z\in\R^{\Sigma(1)}$, denote
\[
D(z)=\sum_{\rho\in\Sigma(1)}z_\rho X_\rho.
\]
Given any inner product $*\in\Inn(N_\R)$, we prove in Theorem~\ref{thm:squarefree} that
\begin{equation}\label{eq:squarefree}
D(z_1)\cdots D(z_k)=\sum_{\sigma\in\Sigma(k)}\Big(\det(G_\sigma)\sum_{f\in L(\sigma)}\prod_{j=1}^k\frac{\det(G_{\sigma(f,j),f(j)}(z_j))}{\det(G_{\sigma(f,j)})}\Big)X_\sigma,
\end{equation}
where all notation is as in \eqref{eq:geomvolumeformula}. Equation~\eqref{eq:squarefree} provides a way of expressing any product of divisors in $A^\bullet(\Sigma)$ as a linear combination of square-free products of divisors. Moreover, we also prove that the coefficients in the right-hand side of \eqref{eq:squarefree} are positive if $z_1,\dots,z_k\in\cC(\Sigma,*)$ and nonnegative if $z_1,\dots,z_k\in\overline\cC(\Sigma,*)$, so this result provides a way of computing \emph{effective} square-free expressions of pseudo-cubical divisors.

\subsection{Degree maps, tropical fans, and volume polynomials}

From \eqref{eq:weightedvolume}, \eqref{eq:geomvolumeformula}, and \eqref{eq:squarefree}, we see that the weighted volume $\Vol(C_{\Sigma,*}(z);\omega)$ is the weighted sum of the coefficients of $D(z)^d\in A^d(\Sigma)$, as long as we express $D(z)^d$ using the square-free formula in \eqref{eq:squarefree}, which depends on $*\in\Inn(N_\R)$. Therefore, in order to determine whether $\Vol(C_{\Sigma,*}(z);\omega)$ is independent of $*$, it suffices to know whether there exists a well-defined linear degree map
\begin{equation}\label{eq:tropicaldegreemap}
\deg_{\Sigma,\omega}:A^d(\Sigma)\rightarrow\R
\end{equation}
such that $\deg_{\Sigma,\omega}(X_\sigma)=\omega(\sigma)$ for every $\sigma\in\Sigma(d)$. In fact, an elementary computation \cite[Proposition~5.6]{AHK} shows that such a degree map exists if and one if
\begin{equation}\label{eq:weighted}
\sum_{\sigma\in\Sigma(k)\atop \tau\preceq\sigma}\omega(\sigma)u_{\sigma\setminus\tau}\in N_{\tau,\R}\;\;\;\text{ for every }\;\;\;\tau\in\Sigma(d-1).
\end{equation}

The weighted balancing condition in \eqref{eq:weighted} is the defining property of a tropical fan. More precisely, a \textbf{tropical $d$-fan} $(\Sigma,\omega)$ is a $d$-fan $\Sigma$ along with a weight function $\omega:\Sigma(d)\rightarrow\R_{>0}$ satisfying \eqref{eq:weighted}. When $\omega(\sigma)=1$ for all $\sigma$, we say that $\Sigma$ is \textbf{balanced}, and we omit $\omega$ from the notation. Given a simplicial tropical fan $(\Sigma,\omega)$, we define the \textbf{volume polynomial} by
\begin{align*}
\Vol_{\Sigma,\omega}:A^1(\Sigma)&\rightarrow\R\\
D&\mapsto\deg_{\Sigma,\omega}(D^d),
\end{align*}
where the tropical degree map $\deg_{\Sigma,\omega}$ is determined by the property that $\deg_{\Sigma,\omega}(X_\sigma)=\omega(\sigma)$ for every $\sigma\in\Sigma(d)$.

Our main result (Theorem~\ref{thm:main}) can now be stated precisely. Let $(\Sigma,\omega)$ be a simplicial tropical fan in $N_\R$ and choose an inner product $*\in\Inn(N_\R)$. For any $D\in A^1(\Sigma)$ and $z\in\overline\cC(\Sigma,*)$ with $D=\sum z_\rho X_\rho$, we have
\begin{equation}\label{eq:alg=geom}
\Vol_{\Sigma,\omega}(D)=\Vol(C_{\Sigma,*}(z);\omega).
\end{equation}
Remarkably, even though the shape and volume of each polytope in $C_{\Sigma,*}(z)$ depends nontrivially on the choice of inner product and $z$-coordinates used to represent $D$, the weighted sum of volumes of these polytopes is independent of these choices.

\section{The cubical hypothesis}\label{sec:cubical}

In this section, we develop a number of preparatory results regarding normal complexes. These results will be especially important when it comes time to compute volumes of normal complexes in the next section. Throughout this section, let $\Sigma\subseteq N_\R$ denote a simplicial $d$-fan, and let $*\in\Inn(N_\R)$ be an inner product.

\subsection{Recharacterizing the cubical hypothesis}

In this subsection, we introduce a useful characterization of the (pseudo-)cubical hypothesis. Let $z\in\R^{\Sigma(1)}$ and for each cone $\sigma\in\Sigma$, define $w_\sigma$ to be the unique vector in the following intersection
\[
N_{\sigma,\R}\cap\big\{v\in N_\R\;|\;v* u_{\rho}= z_{\rho}\text{ for all }\rho\in\sigma(1)\big\}=\{w_\sigma\}.
\]
The fact that the intersection contains a single vector follows from the assumption that $\Sigma$ is simplicial. Given a cone $\sigma\in\Sigma$, we say that the value $z\in\R^{\Sigma(1)}$ is \textbf{cubical (pseudo-cubical) with respect to $(\sigma,*)$} if
\[
w_\tau\in\tau^\circ\;\;\;(w_\tau\in\tau)\;\;\;\text{ for all faces }\;\;\;\tau\preceq\sigma.
\]
Notice that $z\in\R^{\Sigma(1)}$ is (pseudo-)cubical with respect to $(\Sigma,*)$ (as defined in the previous section) if and only if it is (pseudo-)cubical  with respect to $(\sigma,*)$ for each $\sigma\in\Sigma(d)$. The next results provides an alternative characterization of the (pseudo-)cubical hypothesis.

\begin{proposition}\label{prop:charcub} 
A value $z\in\R^{\Sigma{(1)}}$ is cubical (pseudo-cubical) with respect to $(\sigma,*)$ if and only if
\[
u_\rho* w_\tau<z_\rho\;\;\;(u_\rho* w_\tau\leq z_\rho)\;\;\;\text{ for all faces }\;\;\;\tau \preceq \sigma\;\;\;\text{ and rays }\;\;\;\rho\notin \tau(1).
\]
\end{proposition}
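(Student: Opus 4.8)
The plan is to pin down the coordinates of each vector $w_\tau$ with respect to the ray generators of $\tau$ and to read off the (pseudo-)cubical condition as the positivity (resp.\ nonnegativity) of these coordinates. Since $\tau$ is simplicial, $\{u_\rho\mid\rho\in\tau(1)\}$ is a basis of $N_{\tau,\R}$, so we may write $w_\tau=\sum_{\rho\in\tau(1)}c^\tau_\rho u_\rho$, and by definition $w_\tau\in\tau^\circ$ (resp.\ $w_\tau\in\tau$) if and only if every $c^\tau_\rho>0$ (resp.\ $\geq 0$). Thus the entire statement reduces to characterizing the sign of each coordinate $c^\tau_\rho$ in terms of the inner products $u_\rho* w_{\tau'}$.

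The key computation is a one-step comparison of $w_\tau$ with $w_{\tau\setminus\rho}$ for a fixed ray $\rho\in\tau(1)$. Writing $u_\rho=q+p_\rho$ for the $*$-orthogonal decomposition with $q\in N_{\tau\setminus\rho,\R}$ and $p_\rho$ orthogonal to $N_{\tau\setminus\rho,\R}$, I first observe that $w_\tau-w_{\tau\setminus\rho}$ lies in $N_{\tau,\R}$ and is $*$-orthogonal to $N_{\tau\setminus\rho,\R}$ (since both vectors pair to $z_\eta$ against every $u_\eta$ with $\eta\in(\tau\setminus\rho)(1)$), hence equals $s\,p_\rho$ for a scalar $s$. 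Solving the one remaining equation $w_\tau* u_\rho=z_\rho$ and using $p_\rho* u_\rho=p_\rho* p_\rho>0$ — positive because $p_\rho\neq 0$ (as $u_\rho\notin N_{\tau\setminus\rho,\R}$ by simpliciality) and $*$ is positive definite — gives the closed form
\[
s=\frac{z_\rho-u_\rho* w_{\tau\setminus\rho}}{p_\rho* p_\rho}.
\]
Substituting back, $w_\tau=(w_{\tau\setminus\rho}-s\,q)+s\,u_\rho$ with $w_{\tau\setminus\rho}-s\,q\in N_{\tau\setminus\rho,\R}$, which identifies $s$ as exactly the coordinate $c^\tau_\rho$ of $w_\tau$ along $u_\rho$. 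In particular $c^\tau_\rho>0$ (resp.\ $\geq 0$) if and only if $u_\rho* w_{\tau\setminus\rho}<z_\rho$ (resp.\ $\leq z_\rho$).

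With this in hand both implications follow directly. For the backward direction, each coordinate $c^\tau_\rho$ of $w_\tau$ is governed by the single inequality attached to the pair $(\tau\setminus\rho,\rho)$ — note $\rho\notin(\tau\setminus\rho)(1)$ — so the hypothesis forces every coordinate to be positive (resp.\ nonnegative), i.e.\ $w_\tau\in\tau^\circ$ (resp.\ $w_\tau\in\tau$) for every $\tau\preceq\sigma$. For the forward direction, given $\tau\preceq\sigma$ and a ray $\rho\in\sigma(1)\setminus\tau(1)$ (which is the reading of ``$\rho\notin\tau(1)$'' appropriate to the local setting of $\sigma$), the face $\tau'\preceq\sigma$ with $\tau'(1)=\tau(1)\cup\{\rho\}$ exists by simpliciality, and the cubical hypothesis $w_{\tau'}\in\tau'^\circ$ says in particular that its $u_\rho$-coordinate $s$ is positive, which by the displayed formula is precisely $u_\rho* w_\tau<z_\rho$; the pseudo-cubical case is identical with the strict inequalities relaxed. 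The step I would flag as the main obstacle is a temptation to prove the backward direction by inducting on $\dim\tau$ to propagate $w_\tau\in\tau^\circ$: this fails, because the update $w_\tau=w_{\tau\setminus\rho}+s\,p_\rho$ perturbs the previously-established coordinates through the $-s\,q$ term. The resolution is exactly the observation above that each coordinate $c^\tau_\rho$ has a closed form depending only on $w_{\tau\setminus\rho}$, which decouples the coordinates and removes the need for any induction.
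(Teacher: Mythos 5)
Your proof is correct and follows essentially the same route as the paper's: both arguments hinge on the observation that $w_\tau-w_{\tau\setminus\rho}$ is $*$-orthogonal to $N_{\tau\setminus\rho,\R}$ and that $u_\rho*(w_\tau-w_{\tau\setminus\rho})=z_\rho-u_\rho*w_{\tau\setminus\rho}$, which determines the sign of the $u_\rho$-coordinate of $w_\tau$. The paper phrases the conclusion via the half-space presentation $\tau=N_{\tau,\R}\cap\bigcap_\rho H_{\tau,\rho}$ rather than via the barycentric coordinates $c^\tau_\rho$, but these are the same criterion, and your explicit formula for $s$ is just a slightly more quantitative version of the paper's sign computation.
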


\begin{proof}
Given a face $\tau\preceq \sigma$ and a ray $\rho\in\tau(1)$, the subspace $N_{\tau\setminus\rho,\R}$ divides $N_{\tau,\R}$ into two half-spaces; let $H_{\tau,\rho}$ denote the closed half-space that contains $u_{\rho}\notin N_{\tau\setminus\rho,\R}$. The cone $\tau$ has a half-space presentation
\[
\tau=N_{\tau,\R}\cap\bigcap_{\rho\in\tau(1)} H_{\tau,\rho}\;\;\;\Longrightarrow\;\;\;\tau^\circ=N_{\tau,\R}\cap\bigcap_{\rho\in\tau(1)} H_{\tau,\rho}^\circ.
\]

We claim that $w_\tau\in H_{\tau,\rho}^\circ$ if and only if $u_{\rho}* w_{\tau\setminus\rho}<z_{\rho}$. This follows from the following three observations.
\begin{enumerate}
\item Since $w_{\tau\setminus\rho}\in N_{\tau\setminus\rho,\R}$, then $w_\tau\in H_{\tau,\rho}^\circ$ if and only if $w_\tau-w_{\tau\setminus\rho}\in H_{\tau,\rho}^\circ$. 
\item For all $\rho'\in \tau(1)\setminus\{\rho\}$, the definition of $w_\tau$ and $w_{\tau\setminus\rho}$ implies that
\[
u_{\rho'}* (w_\tau-w_{\tau\setminus\rho})=u_{\rho'}* w_\tau-u_{\rho'}* w_{\tau\setminus\rho}=z_{\rho'}-z_{\rho'}=0,
\]
so $w_\tau-w_{\tau\setminus\rho}$ is normal to $N_{\tau\setminus\rho,\R}$. This implies that $w_\tau-w_{\tau\setminus\rho}\in H_{\tau,\rho}^\circ$ if and only if $u_{\rho}*(w_\tau-w_{\tau\setminus\rho})>0$.
\item The definition of $w_\tau$ implies that
\[
u_{\rho}*(w_\tau-w_{\tau\setminus\rho})=u_{\rho}*w_\tau-u_{\rho}*w_{\tau\setminus\rho}=z_{\rho}-u_{\rho}*w_{\tau\setminus\rho}.
\]
\end{enumerate}

Now to prove the statement in the proposition regarding the cubical hypothesis, notice that $z$ is cubical with respect to $(\sigma,*)$ if and only if $w_\tau\in \tau^\circ$ for all $\tau\preceq\sigma$ (by definition), which holds if and only if $w_\tau\in H_{\tau,\rho}^\circ$ for all $\tau\preceq\sigma$ and $\rho\in\tau(1)$ (by the above half-space presentation), which holds if and only if $u_{\rho}* w_{\tau\setminus\rho}<z_\rho$ for all $\tau\preceq\sigma$ and $\rho\in\tau(1)$ (by the above argument). This last condition is equivalent to the one given in the proposition. To prove the statement in the proposition regarding the pseudo-cubical hypothesis, simply remove each $\circ$ and replace each $<$ and $>$ with $\leq$ and $\geq$ in the above arguments.
\end{proof}

As a consequence of the previous proposition, we obtain the following structural result concerning (pseudo-)cubical values.

\begin{proposition}\label{prop:cubecone}
The set of cubical values $\cC(\Sigma,*)\subseteq\R^{\Sigma(1)}$ is an open convex polyhedral cone, the set of pseudo-cubical values $\overline\cC(\Sigma,*)\subseteq\R^{\Sigma(1)}$ is a closed convex polyhedral cone, and $\cC(\Sigma,*)=\overline\cC(\Sigma,*)^\circ$.
\end{proposition}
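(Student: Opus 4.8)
The plan is to use Proposition~\ref{prop:charcub} to present both $\cC(\Sigma,*)$ and $\overline\cC(\Sigma,*)$ as the solution sets of one common finite system of homogeneous linear inequalities in $z$---strict inequalities for $\cC(\Sigma,*)$ and the corresponding non-strict inequalities for $\overline\cC(\Sigma,*)$---after which all three assertions follow from elementary properties of such systems.

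The first step is to record that, for each cone $\tau\in\Sigma$, the vector $w_\tau$ depends linearly and homogeneously on $z$. Writing $w_\tau=\sum_{\eta\in\tau(1)}c_\eta u_\eta\in N_{\tau,\R}$, the defining relations $w_\tau*u_\rho=z_\rho$ for $\rho\in\tau(1)$ become $G_\tau\,c=(z_\rho)_{\rho\in\tau(1)}$, where $G_\tau=(u_\eta*u_\rho)_{\eta,\rho\in\tau(1)}$ is the Gram matrix of the ray generators of $\tau$. Because $\Sigma$ is simplicial these generators are linearly independent, so $G_\tau$ is positive definite, hence invertible, and $c=G_\tau^{-1}(z_\rho)_{\rho\in\tau(1)}$. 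Thus $w_\tau$ is a linear function of $z$ that involves only the coordinates $z_\rho$ with $\rho\in\tau(1)$.

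Next, for each face $\tau\preceq\sigma$ of a maximal cone $\sigma\in\Sigma(d)$ and each ray $\rho\in\sigma(1)\setminus\tau(1)$, I would set $g_{\tau,\rho}(z)=z_\rho-u_\rho*w_\tau$. By the previous step this is a homogeneous linear functional, and---this is the crucial observation---since $w_\tau$ involves only coordinates indexed by $\tau(1)$ while $\rho\notin\tau(1)$, the coefficient of $z_\rho$ in $g_{\tau,\rho}$ equals $1$; in particular $g_{\tau,\rho}$ is not the zero functional. Since $z$ is (pseudo-)cubical with respect to $(\Sigma,*)$ precisely when it is (pseudo-)cubical with respect to every $(\sigma,*)$ with $\sigma\in\Sigma(d)$, Proposition~\ref{prop:charcub} gives
\[
\cC(\Sigma,*)=\bigcap_{(\tau,\rho)}\{z\mid g_{\tau,\rho}(z)>0\},\qquad \overline\cC(\Sigma,*)=\bigcap_{(\tau,\rho)}\{z\mid g_{\tau,\rho}(z)\geq 0\},
\]
the intersections running over the finitely many pairs $(\tau,\rho)$ above.

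From this the conclusions are formal. Each $\{g_{\tau,\rho}\geq 0\}$ is a closed half-space through the origin, so the finite intersection $\overline\cC(\Sigma,*)$ is a closed convex polyhedral cone, and likewise $\cC(\Sigma,*)$ is an open convex cone cut out by the same functionals with strict inequalities. For $\cC(\Sigma,*)=\overline\cC(\Sigma,*)^\circ$, the inclusion $\subseteq$ holds because $\cC(\Sigma,*)$ is an open subset of $\overline\cC(\Sigma,*)$. For the reverse inclusion, given $z\in\overline\cC(\Sigma,*)^\circ$ with a ball about $z$ contained in $\overline\cC(\Sigma,*)$, if some $g_{\tau,\rho}(z)=0$ then, $g_{\tau,\rho}$ being a nonzero linear functional, a small perturbation of $z$ inside that ball makes $g_{\tau,\rho}$ negative---a contradiction; hence every $g_{\tau,\rho}(z)>0$ and $z\in\cC(\Sigma,*)$. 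I expect the only genuine content to be the verification that each $g_{\tau,\rho}$ is nonzero: this is exactly what guarantees that strictifying the inequalities produces the topological interior (and not some smaller set), while the remaining steps are routine manipulations with finitely many homogeneous linear inequalities.
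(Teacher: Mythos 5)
Your proof is correct and follows essentially the same route as the paper's: both invoke Proposition~\ref{prop:charcub} together with the observation that $w_\tau$ depends linearly and homogeneously on $z$ to present $\cC(\Sigma,*)$ and $\overline\cC(\Sigma,*)$ as the strict and non-strict solution sets of one finite homogeneous linear system. Your explicit verification that each functional $g_{\tau,\rho}$ is nonzero (via the unit coefficient of $z_\rho$) is a welcome detail that the paper subsumes under ``standard results in polyhedral geometry,'' since it is exactly what guarantees $\overline\cC(\Sigma,*)^\circ\subseteq\cC(\Sigma,*)$.
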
 

\begin{proof}
By elementary linear algebra considerations, it follows from the definition of the $w$-vectors that the coordinates of $w_\tau$ are homogeneous and linear in $z$. Thus, by Proposition~\ref{prop:charcub}, the set of cubical values are characterized by a finite set of strict inequalities that are homogeneous and linear in $z$, and the set of pseudo-cubical values are characterized by weakening the strict inequalities to allow for equality. The result then follows from standard results in polyhedral geometry.
\end{proof}

\subsection{Structure of normal complexes}

In this subsection, we prove various structural properties of normal complexes, including that the normal complex $C_{\Sigma,*}(z)$ is, in fact, a polytopal complex, and that the constituent polytopes are combinatorially equivalent to cubes when $z$ is cubical. We begin with the following description of the combinatorial structure of the polytopes $P_{\sigma,*}(z)$.

\begin{proposition}\label{prop:vertices}
Suppose that $z\in\overline\cC(\Sigma,*)$ and $\sigma\in\Sigma$.
\begin{enumerate}
\item The vertices of $P_{\sigma,*}(z)$ are 
\[
W=\{w_\tau\;|\;\tau\preceq\sigma\}. 
\]
\item For any pair of disjoint subsets $S_0,S_1\subseteq\sigma(1)$, there is a face $F_{S_0,S_1}\preceq P_{\sigma,*}(z)$ such that
\[
F_{S_0,S_1}\cap W=\{w_\tau\;|\;S_0\subseteq\sigma(1)\setminus\tau(1),\;S_1\subseteq\tau(1)\},
\]
and the faces of $P_{\sigma,*}(z)$ are 
\[
\{F_{S_0,S_1}\;|\;S_0,S_1\subseteq\sigma(1),\;S_0\cap S_1=\emptyset\}.
\]
\end{enumerate}
\end{proposition}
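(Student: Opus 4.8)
The plan is to pass to ray coordinates, where $P_{\sigma,*}(z)$ becomes a box-like polytope cut out by $2k$ inequalities in complementary pairs, and then to run two parallel inductions on $k=\dim\sigma$. Using the linearly independent generators $\{u_\rho\mid\rho\in\sigma(1)\}$ as a basis of $N_{\sigma,\R}$, every $v$ has coordinates $x_\rho=x_\rho(v)$, with $v\in\sigma$ iff all $x_\rho\geq 0$ and $v*u_\rho=(G_\sigma x)_\rho$. Thus $P_{\sigma,*}(z)\cong\{x\mid x_\rho\geq 0\text{ and }(G_\sigma x)_\rho\leq z_\rho\text{ for all }\rho\in\sigma(1)\}$, a bounded polytope since $G_\sigma$ is positive definite. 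For each $\rho$ there is a \emph{cone inequality} $x_\rho\geq 0$ and a \emph{truncation inequality} $(G_\sigma x)_\rho\leq z_\rho$, and $w_\tau$ is exactly the point at which the truncation inequalities for $\rho\in\tau(1)$ and the cone inequalities for $\rho\notin\tau(1)$ are simultaneously tight. Two bookkeeping facts organize everything: writing $A(p)=\{\rho\mid x_\rho(p)>0\}$ and $B(p)=\{\rho\mid p*u_\rho=z_\rho\}$, one checks directly that $p\in F_{S_0,S_1}$ iff $S_0\cap A(p)=\emptyset$ and $S_1\subseteq B(p)$, and that for $p\in W$ the set of indexing faces is the interval $\{\tau\mid A(p)\subseteq\tau(1)\subseteq B(p)\}$, so coincidences $w_\tau=w_{\tau'}$ are completely described.

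For part (1), the forward direction is that each $w_\tau$ is a vertex. Feasibility is where pseudo-cubicality enters: $w_\tau\in\tau\subseteq\sigma$ by definition of the pseudo-cubical condition, the truncation inequalities for $\rho\in\tau(1)$ are tight by construction, and the remaining inequalities $u_\rho*w_\tau\leq z_\rho$ (for $\rho\notin\tau(1)$) hold by Proposition~\ref{prop:charcub}. Since the $k$ tight inequalities at $w_\tau$ are linearly independent—their unique common solution is $w_\tau$, because $\sigma$ is simplicial and $G_\sigma$ is nondegenerate—the point $w_\tau$ is a vertex. The converse, that every vertex lies in $W$, I would prove by induction on $k$. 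If a vertex $v$ has some cone inequality tight, say $x_\rho(v)=0$, then $v$ lies in the face $P_{\sigma,*}(z)\cap\{x_\rho=0\}$; the key point is that this face equals the lower-dimensional normal complex $P_{\sigma\setminus\rho,*}(z)$, because Proposition~\ref{prop:charcub} forces the ``foreign'' truncation inequality for $\rho$ to be redundant on it. Hence $v$ is a vertex of $P_{\sigma\setminus\rho,*}(z)$, and the inductive hypothesis gives $v=w_\tau$ with $\tau\preceq\sigma\setminus\rho$. If instead no cone inequality is tight, then $v\in\sigma^\circ$, so all tight inequalities are truncations; as there are only $k$ of these and they are linearly independent, being a vertex forces all of them to be tight, whence $v=w_\sigma$.

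For part (2), that $F_{S_0,S_1}$ is a face is immediate, since it is the intersection of $P_{\sigma,*}(z)$ with the supporting hyperplanes of the valid inequalities indexed by $S_0$ and $S_1$. The vertex incidence is a short combinatorial computation from the two bookkeeping facts: $w_\tau\in F_{S_0,S_1}$ iff some coincident index $\tau'$ (concretely $\tau'(1)=A(w_\tau)\cup S_1$) satisfies $S_1\subseteq\tau'(1)$ and $S_0\cap\tau'(1)=\emptyset$, which uses $S_0\cap S_1=\emptyset$ in an essential way. To see that every face arises this way I would again induct on $k$. A proper face $F$ with relative-interior point $q$ either lies in a cone facet—i.e.\ $x_\rho\equiv 0$ on $F$ for some $\rho$, in which case $F$ is a face of $P_{\sigma\setminus\rho,*}(z)$ and, by the inductive hypothesis, $F=F_{S_0',S_1'}$ for disjoint $S_0',S_1'\subseteq(\sigma\setminus\rho)(1)$, so that $F=F_{S_0'\cup\{\rho\},S_1'}$ is presented by a disjoint pair—or else $q\in\sigma^\circ$, in which case no cone inequality is tight on $F$ and $F=F_{\emptyset,S_1}$ with $S_1=B(q)$.

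The main obstacle is the genuine degeneracy of the pseudo-cubical (as opposed to strictly cubical) case: on the boundary of $\overline\cC(\Sigma,*)$ several $w_\tau$ collapse to a single point, so $W$ can have strictly fewer than $2^k$ elements and $P_{\sigma,*}(z)$ need not be full-dimensional or combinatorially a cube. The danger is twofold—spurious vertices not of the form $w_\tau$ might appear, and a face's naive ``all tight constraints'' description could use an overlapping pair with $S_0\cap S_1\neq\emptyset$—and both are precisely what Proposition~\ref{prop:charcub} controls. The device that dissolves them is the redundancy statement used in both inductions: tightening a cone inequality $x_\rho=0$ makes the foreign truncation for $\rho$ drop out, so peeling off cone facets removes exactly the constraints that would otherwise create overlap, and the recursion bottoms out in the case $q\in\sigma^\circ$, where $S_0=\emptyset$ and overlap is impossible. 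I expect the careful verification of the identification $P_{\sigma,*}(z)\cap\{x_\rho=0\}=P_{\sigma\setminus\rho,*}(z)$ to be the linchpin on which both inductions rest.
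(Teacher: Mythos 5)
Your proof is correct and follows essentially the same route as the paper's: the same $2k$ half-space presentation with the faces $F_\rho^0$ and $F_\rho^1$, the same pivotal identification $F_\rho^0=P_{\sigma\setminus\rho,*}(z)$ obtained from Proposition~\ref{prop:charcub}, and the same induction on $\dim\sigma$ for both the vertex and face descriptions. The explicit ray-coordinate formulation and the interval description $\{\tau\mid A(p)\subseteq\tau(1)\subseteq B(p)\}$ of coincident $w$-vectors are a slightly more systematic packaging of the bookkeeping the paper does in \eqref{eq:vertex1} and \eqref{eq:vertex2}, but not a different argument.
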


We note that, in general, both the vertex and face descriptions in Proposition~\ref{prop:vertices} are redundant. For example, if $w_\tau$ lies in a proper face $\tau'\prec\tau$, which can happen if $z$ is pseudo-cubical but not cubical, it then follows from the definition of the $w$-vectors that $w_{\tau'}=w_\tau$. As we'll see in Proposition~\ref{prop:cube}, there is no redundancy in these descriptions when $z$ is cubical.

\begin{proof}[Proof of Proposition~\ref{prop:vertices}]
Recall that
\[
P_{\sigma,*}(z)=\sigma\cap\big\{v\in N_\R\;|\;v* u_\rho\leq z_\rho\text{ for all }\rho\in\sigma{(1)}\big\}\subseteq N_{\sigma,\R}.
\]
It follows from this description that $P_{\sigma,*}(z)$ is an intersection of closed half-spaces in $N_{\sigma,\R}$---two half-spaces indexed by each $\rho\in\sigma(1)$---where the bounding hyperplanes are
\[
N_{\sigma\setminus\rho,\R}\;\;\;\text{ and }\;\;\;\{v\in N_{\sigma,\R}\;|\;v* u_\rho=z_\rho\}.
\]
Therefore, for each $\rho\in\sigma(1)$, we obtain two faces 
\[
F_{\rho}^0=P_{\sigma,*}(z)\cap N_{\sigma\setminus\rho,\R}\;\;\;\text{ and }\;\;\;F_{\rho}^1=P_{\sigma,*}(z)\cap\{v\in N_{\sigma,\R}\;|\;v* u_\rho=z_\rho\}.
\]
These faces may not be facets of $P_{\sigma,*}(z)$, but since they are obtained by intersecting with the hyperplanes associated to a half-space presentation of $P_{\sigma,*}(z)$, it follows that this set of faces contains all facets. In particular, this implies that every face of $P_{\sigma,*}(z)$ can be obtained as an intersection of some subset of the faces of the form $F_{\rho}^0$ and $F_{\rho}^1$.

We first prove the vertex description by induction on $\dim(\sigma)$. If $\dim(\sigma)=0$, then 
\[
P_{\sigma,*}(z)=\{0\}=\{w_\sigma\}, 
\]
proving the base case. Now suppose $\dim(\sigma)>0$. For each $\rho\in\sigma(1)$, we have
\begin{equation}\label{eq:vertices1}
F_{\rho}^0=P_{\sigma,*}(z)\cap N_{\sigma\setminus\rho,\R}=P_{\sigma\setminus\rho,*}(z)\cap\{v\in N_{\sigma\setminus\rho,\R}\;|\;v\cdot u_\rho\leq z_\rho\},
\end{equation}
where the second equality follows from the fact that $P_{\sigma,*}(z)$ has one more defining inequality than $P_{\sigma\setminus\rho,*}(z)$. By the induction hypothesis, the vertices of $P_{\sigma\setminus\rho,*}(z)$ are $\{w_\tau\;|\;\tau\preceq \sigma\setminus\rho\}$, and using the pseudo-cubical hypothesis and Proposition~\ref{prop:charcub}, we see that
\[
w_\tau*u_\rho\leq z_\rho\;\;\;\text{ for all }\;\;\;\tau\preceq\sigma\setminus\rho.
\]
Thus, $P_{\sigma\setminus\rho,*}(z)\subseteq\{v\in N_{\sigma\setminus\rho,\R}\;|\;v\cdot u_\rho\leq z_\rho\}$, and it follows from \eqref{eq:vertices1} that
\[
F_{\rho}^0=P_{\sigma\setminus\rho,*}(z).
\]
Therefore, the vertices of $P_{\sigma,*}(z)$ that lie in the face $F_{\rho}^0$ are equal to $\{w_\tau\;|\;\tau\preceq \sigma\setminus\rho\}$. Applying this same reasoning to all $\rho\in\sigma(1)$, it follows that the vertices of $P_{\sigma,*}(z)$ that are contained in at least one face of the form $F_{\rho}^0$ are equal to $\{w_\tau\;|\;\tau\prec\sigma\}$. It now remains to consider the vertices that do not lie in any of the faces of the form $F_{\rho}^0$. Noting that 
\[
\bigcap_{\rho\in\sigma(1)}F_{\rho}^1=\{w_\sigma\},
\]
it follows that there is at most one such vertex, and it is $w_\sigma$. Thus, we conclude that the vertices of $P_{\sigma,*}(z)$ are $W=\{w_\tau\;|\;\tau\preceq\sigma\}$.

Next, we justify the face description. Intersecting each face $F_\rho^0$ with the vertices, we claim that
\begin{equation}\label{eq:vertex1}
F_\rho^0\cap W=\{w_\tau\;|\;\rho\notin \tau(1)\}. 
\end{equation}
To prove this, first note that $w_\tau\in N_{\sigma\setminus\rho,\R}$ if $\rho\notin \tau(1)$ (by definition of $w_\tau$), which proves that $F_\rho^0\cap W\supseteq\{w_\tau\;|\;\rho\notin \tau(1)\}$. For the other inclusion, suppose that $w_\tau\in F_\rho^0\cap W$ and consider the cone $\tau'=\tau\cap(\sigma\setminus\rho)$. Since $w_\tau\in N_{\tau,\R}$ and $w_\tau\in F_\rho^0\subseteq N_{\sigma\setminus\rho,\R}$, it follows that $w_\tau\in N_{\tau',\R}\subseteq N_{\tau,\R}$. By definition of the $w$-vectors, this implies that
\[
w_\tau=w_{\tau'}\in\{w_\tau\;|\;\rho\notin \tau(1)\}.
\]
Similarly, intersecting each face $F_\rho^1$ with the vertices, we claim that
\begin{equation}\label{eq:vertex2}
F_\rho^1\cap W=\{w_\tau\;|\;\rho\in \tau(1)\}. 
\end{equation}
To prove this, first notice that $w_\tau*u_\rho=z_\rho$ if $\rho\in \tau(1)$ (by definition of $w_\tau$), which proves that $F_\rho^1\cap W\supseteq\{w_\tau\;|\;\rho\in \tau(1)\}$. To prove the other inclusion, suppose that $w_\tau\in F_\rho^1\cap W$ and consider the cone $\tau'\preceq\sigma$ with rays $\tau(1)\cup\{\rho\}$. Since $w_\tau*u_\eta=z_\eta$ for every $\eta\in\tau'(1)$, it follows from the definition of the $w$-vectors that
\[
w_\tau=w_{\tau'}\in\{w_\tau\;|\;\rho\in \tau(1)\}. 
\]

Now, for each pair of disjoint subsets $S_0,S_1\subseteq\sigma(1)$, define
\[
F_{S_0,S_1}=\bigcap_{\rho\in S_0}F_\rho^0\cap\bigcap_{\rho\in S_1}F_\rho^1.
\]
From \eqref{eq:vertex1} and \eqref{eq:vertex2}, we see that
\[
F_{S_0,S_1}\cap W=\{w_\tau\;|\;S_0\subseteq\sigma(1)\setminus\tau(1),\;S_1\subseteq\tau(1)\}.
\]
It remains to prove that every face of $P_{\sigma,*}(z)$ is of the form $F_{S_0,S_1}$ for some disjoint pair $S_0,S_1\subseteq\sigma(1)$, and we accomplish this by induction on $\dim(\sigma)$. If $\dim(\sigma)=0$, then the only face of $P_{\sigma,*}(z)$ is $F_{\emptyset,\emptyset}=P_{\sigma,*}(z)=\{0\}$. Suppose $\dim(\sigma)>0$ and let $F\preceq P_{\sigma,*}(z)$ be a face. Then $F$ is an intersection of faces of the form $F_{\rho}^0$ and $F_\rho^1$. If the intersection does not involve $F_{\rho}^0$ for any $\rho$, then there is nothing to prove. If the intersection involves $F_\rho^0$, then we can view $F$ as a face of $F_\rho^0=P_{\sigma\setminus\rho,*}(z)$. By induction, we have that
\[
F=\bigcap_{\eta\in S_0}F_\eta^0\cap\bigcap_{\eta\in S_1}F_\eta^1\preceq P_{\sigma\setminus\rho,*}(z)
\]
for some pair of disjoint subsets $S_0,S_1\subseteq\sigma(1)\setminus\{\rho\}$. As a face of $P_{\sigma,*}(z)$, we can then write
\[
F=\bigcap_{\eta\in S_0\cup\{\rho\}}F_\eta^0\cap\bigcap_{\eta\in S_1}F_\eta^1,
\]
which, upon observing that $S_0\cup\{\rho\}$ and $S_1$ are disjoint, completes the induction step, finishing the proof.
\end{proof}

As a first consequence of the combinatorial description of the polytopes $P_{\sigma,*}(z)$ given in Proposition~\ref{prop:vertices}, we have the following important result.

\begin{proposition}\label{prop:polycomplex}
If $z\in\overline\cC(\Sigma,*)$, then $C_{\Sigma,*}(z)$ is a polytopal complex.
\end{proposition}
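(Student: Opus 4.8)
The plan is to verify the two defining properties of a polytopal complex: that every face of a member of $C_{\Sigma,*}(z)$ is again a member, and that the intersection of any two members is a common face of both. The first property is immediate from the definition $C_{\Sigma,*}(z)=\bigcup_{\sigma\in\Sigma}\widehat P_{\sigma,*}(z)$, since each $\widehat P_{\sigma,*}(z)$ already contains all faces of $P_{\sigma,*}(z)$ and faces of faces are faces. For the second property, I would first reduce to the maximal members: if I can show that for any two cones $\sigma,\sigma'\in\Sigma$ the intersection $G:=P_{\sigma,*}(z)\cap P_{\sigma',*}(z)$ is a common face of $P_{\sigma,*}(z)$ and $P_{\sigma',*}(z)$, then for arbitrary faces $F\preceq P_{\sigma,*}(z)$ and $F'\preceq P_{\sigma',*}(z)$ one has $F\cap F'\subseteq G$, and writing $F\cap F'=(F\cap G)\cap(F'\cap G)$ exhibits it as an intersection of two faces of the polytope $G$, hence a face of $G$; since $G$ is a common face, its faces are faces of both $P_{\sigma,*}(z)$ and $P_{\sigma',*}(z)$, and by the standard fact that a face of a polytope contained in one of its faces is a face of that face, $F\cap F'$ is a face of each of $F$ and $F'$.

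The heart of the argument is the identity
\[
P_{\sigma,*}(z)\cap P_{\sigma',*}(z)=P_{\tau,*}(z),\qquad \tau:=\sigma\cap\sigma'.
\]
Since $\Sigma$ is a fan, $\tau$ is a common face of $\sigma$ and $\sigma'$, and since $\Sigma$ is simplicial this forces $\tau(1)=\sigma(1)\cap\sigma'(1)$. Intersecting the half-space presentations gives
\[
P_{\sigma,*}(z)\cap P_{\sigma',*}(z)=\tau\cap\big\{v\in N_\R\;|\;v*u_\rho\leq z_\rho\text{ for all }\rho\in\sigma(1)\cup\sigma'(1)\big\},
\]
and since $\tau(1)\subseteq\sigma(1)\cup\sigma'(1)$, the inclusion ``$\subseteq P_{\tau,*}(z)$'' is clear. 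The reverse inclusion is the crux. Here I would use that, by Proposition~\ref{prop:vertices}, $P_{\tau,*}(z)$ is the convex hull of the finite vertex set $\{w_{\tau'}\;|\;\tau'\preceq\tau\}$; because each constraint $v*u_\rho\leq z_\rho$ is linear, it suffices to verify it at these vertices. For a ray $\rho\in\sigma(1)\setminus\tau(1)$ and a face $\tau'\preceq\tau\preceq\sigma$ one has $\rho\notin\tau'(1)$, so the pseudo-cubical hypothesis together with Proposition~\ref{prop:charcub} (applied to $(\sigma,*)$) gives $w_{\tau'}*u_\rho\leq z_\rho$; the symmetric argument applied to $(\sigma',*)$ handles the rays in $\sigma'(1)\setminus\tau(1)$. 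This yields $P_{\tau,*}(z)\subseteq P_{\sigma,*}(z)\cap P_{\sigma',*}(z)$ and hence the identity.

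Finally, I would identify $P_{\tau,*}(z)$ as a face of each of $P_{\sigma,*}(z)$ and $P_{\sigma',*}(z)$. Taking $S_0=\sigma(1)\setminus\tau(1)$ and $S_1=\emptyset$ in Proposition~\ref{prop:vertices}(2) produces the face $F_{S_0,\emptyset}=\bigcap_{\rho\in\sigma(1)\setminus\tau(1)}F_\rho^0$ of $P_{\sigma,*}(z)$, whose vertex set is exactly $\{w_{\tau'}\;|\;\tau'\preceq\tau\}$; since this equals the vertex set of $P_{\tau,*}(z)$ and both are convex hulls of their vertices, we get $F_{S_0,\emptyset}=P_{\tau,*}(z)$ (this also follows from the identity $F_\rho^0=P_{\sigma\setminus\rho,*}(z)$ established in the proof of Proposition~\ref{prop:vertices}, intersected over $\rho\in\sigma(1)\setminus\tau(1)$). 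The same reasoning shows $P_{\tau,*}(z)$ is a face of $P_{\sigma',*}(z)$, completing the verification. I expect the main obstacle to be the reverse inclusion in the displayed identity—controlling the ``foreign'' constraints $v*u_\rho\le z_\rho$ with $\rho\in\sigma(1)\setminus\tau(1)$ using only the locally checkable pseudo-cubical data—which is precisely where the reduction to vertices and Proposition~\ref{prop:charcub} are essential.
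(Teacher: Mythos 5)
Your proof is correct and follows essentially the same route as the paper: both reduce to the common cone $\tau=\sigma\cap\sigma'$ and use Proposition~\ref{prop:vertices} to recognize the intersection as the face $F_{\sigma(1)\setminus\tau(1),\emptyset}=\conv(w_\pi\mid\pi\preceq\tau)$ of each polytope. Your extra steps---the explicit identification of the intersection with $P_{\tau,*}(z)$ via Proposition~\ref{prop:charcub} at the vertices, and the careful reduction from arbitrary members to the maximal polytopes---are sound elaborations of details the paper leaves implicit rather than a different argument.
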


\begin{proof}
Recall that
\[
C_{\Sigma,*}(z)=\bigcup_{\sigma\in\Sigma}\widehat P_{\sigma,*}(z)
\]
where $\widehat P_{\sigma,*}(z)$ is the polytopal complex consisting of $P_{\sigma,*}(z)$ and its faces. In order to prove that a collection of polytopes and their faces form a polytopal complex, it suffices to check that the polytopes meet along common faces. Consider two polytopes $P_{\sigma_1,*}(z)$ and $P_{\sigma_2,*}(z)$ associated to cones $\sigma_1,\sigma_2\in\Sigma$. Let $\tau=\sigma_1\cap\sigma_2\in\Sigma$, and notice that $P_{\sigma_1,*}(z)\cap P_{\sigma_2,*}(z)\subseteq N_{\tau,\R}.$ For $i=1,2$, define $S_0^i=\sigma_i(1)\setminus\tau(1)\subseteq\sigma_i(1)$ and notice that 
\[
P_{\sigma_i,*}(z)\cap N_{\tau,\R} = F_{S_i,\emptyset}\preceq P_{\sigma_i,*}(z).
\]
By Proposition~\ref{prop:vertices},
\[
F_{S_i,\emptyset}=\conv(w_\pi\;|\;\pi\preceq\tau), 
\]
from which it follows that 
\[
P_{\sigma_1,*}(z)\cap P_{\sigma_2,*}(z)=\conv(w_\pi\;|\;\pi\preceq\tau)\preceq P_{\sigma_i}(z,*),
\]
showing that the intersection is a face of both $P_{\sigma_1,*}(z)$ and $P_{\sigma_2,*}(z)$.
\end{proof}

As mentioned above, the combinatorial description of $P_{\sigma,*}(z)$ in Proposition~\ref{prop:vertices} may be highly redundant; however, if we restrict to the cubical setting, that redundancy goes away. The next result proves this, while also giving a justification for the term ``cubical.''

\begin{proposition}\label{prop:cube}
If $z\in\cC(\Sigma,*)$ is cubical and $\sigma\in\Sigma(k)$, then the polytope $P_{\sigma,*}(z)$ is combinatorially equivalent to a $k$-cube.
\end{proposition}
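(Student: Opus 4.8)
The plan is to use the vertex and face descriptions of Proposition~\ref{prop:vertices} to exhibit an explicit isomorphism between the face lattice of $P_{\sigma,*}(z)$ and the face lattice of the standard $k$-cube; recall that two polytopes are combinatorially equivalent precisely when their face lattices are isomorphic as posets, so producing such an isomorphism suffices. Writing $k=|\sigma(1)|$ (using that $\Sigma$ is simplicial), model the $k$-cube as $[0,1]^{\sigma(1)}$, whose nonempty faces are indexed by disjoint pairs $(S_0,S_1)$ of subsets of $\sigma(1)$: the face indexed by $(S_0,S_1)$ fixes coordinate $\rho$ to $0$ for $\rho\in S_0$, to $1$ for $\rho\in S_1$, and leaves the remaining coordinates free. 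Containment of cube faces reads $F_{S_0,S_1}\subseteq F_{S_0',S_1'}$ iff $S_0\supseteq S_0'$ and $S_1\supseteq S_1'$. The goal is to match this with the faces $F_{S_0,S_1}$ of Proposition~\ref{prop:vertices}.

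The crux of the argument---and the only place the strict cubical hypothesis is needed---is to show that the $2^k$ vectors $\{w_\tau\mid\tau\preceq\sigma\}$ are pairwise distinct, so that $\tau\mapsto w_\tau$ is a bijection from the subsets of $\sigma(1)$ onto the vertex set $W$. Given distinct faces $\tau,\tau'\preceq\sigma$, there is a ray lying in exactly one of them, say $\rho\in\tau(1)\setminus\tau'(1)$. By definition of the $w$-vectors we have $u_\rho* w_\tau=z_\rho$, while Proposition~\ref{prop:charcub} together with the cubical (strict) hypothesis gives $u_\rho* w_{\tau'}<z_\rho$; hence $w_\tau\neq w_{\tau'}$. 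This is exactly the statement that the redundancy noted after Proposition~\ref{prop:vertices} disappears in the cubical case.

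With distinctness in hand, transport everything to the Boolean lattice $2^{\sigma(1)}$ via $\tau\mapsto\tau(1)$. The face description in Proposition~\ref{prop:vertices}(2) says that $F_{S_0,S_1}$ contains exactly the vertices $w_\tau$ with $S_1\subseteq\tau(1)\subseteq\sigma(1)\setminus S_0$, so its vertex set corresponds to the interval $[S_1,\,\sigma(1)\setminus S_0]$ in $2^{\sigma(1)}$, which is nonempty because $S_0\cap S_1=\emptyset$. Since $P_{\sigma,*}(z)$ is a polytope, each face is the convex hull of the vertices it contains and faces are ordered by inclusion of their vertex sets; moreover a nonempty interval in a Boolean lattice determines its endpoints $S_1$ and $\sigma(1)\setminus S_0$. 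Therefore $(S_0,S_1)\mapsto F_{S_0,S_1}$ is a bijection onto the set of all faces (this being the complete list by Proposition~\ref{prop:vertices}(2)), and $F_{S_0,S_1}\subseteq F_{S_0',S_1'}$ holds iff the corresponding intervals are nested, i.e. iff $S_1'\subseteq S_1$ and $S_0'\subseteq S_0$. This is precisely the containment relation among cube faces, giving the desired face-lattice isomorphism.

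I expect the vertex-distinctness step to be the genuine obstacle: everything afterward is bookkeeping with intervals in the Boolean lattice together with the standard fact that the face poset of a polytope is ordered by vertex-set inclusion, whereas distinctness is where the gap between the cubical and the merely pseudo-cubical hypotheses is felt and where Proposition~\ref{prop:charcub} does the real work.
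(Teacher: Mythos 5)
Your proof is correct and follows essentially the same route as the paper's: both reduce the claim to the vertex and face description of Proposition~\ref{prop:vertices} and observe that the cubical hypothesis forces the $2^k$ vectors $w_\tau$ to be pairwise distinct, after which the two face lattices match. The only cosmetic difference is that the paper deduces distinctness directly from $w_\tau\in\tau^\circ$ together with the disjointness of relative interiors of distinct faces, whereas you deduce it from the strict inequalities of Proposition~\ref{prop:charcub}; your explicit bookkeeping with Boolean-lattice intervals is just a spelled-out version of the paper's one-line comparison of the two combinatorial descriptions.
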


\begin{proof}
We must show that the face lattice of $P_{\sigma,*}(z)$ is isomorphic to the face lattice of the unit cube $[0,1]^k\subseteq\R^k$. Notice that the faces $[0,1]^k$ are of the form
\[
\widetilde F_{S_0,S_1}=[0,1]^k\cap\{x_\rho=0\;|\;\rho\in S_0\}\cap\{x_\rho=1\;|\;\rho\in S_1\}
\]
where $S_0,S_1\subseteq\{1,\dots,k\}$ are disjoint subsets. In particular, there are $2^k$ vertices given by
\[
\widetilde W=\Big\{\widetilde w_\tau=\sum_{\rho\in \tau}e_\rho\;|\;\tau\subseteq \{1,\dots,k\}\Big\},
\]
and the face lattice of $[0,1]^k$ is determined by the vertex-face containments:
\[
\widetilde W\cap \widetilde F_{S_0,S_1}=\{\widetilde w_\tau\;|\;S_0\subseteq\{1,\dots,k\}\setminus\tau,\;S_1\subseteq\tau\}.
\]

Comparing the above description with the combinatorial description of $P_{\sigma,*}(z)$ given in Proposition~\ref{prop:vertices}, we see that the two descriptions are equivalent as long as the vertices $w_\tau\in P_{\sigma,*}(z)$ are all distinct. By the cubical hypothesis, we know that $w_\tau\in\tau^\circ$ for all $\tau\preceq\sigma$. Along with the observation that $\tau_1^\circ\cap\tau_2^\circ=\emptyset$ for all $\tau_1\neq\tau_2$, we conclude that $w_{\tau_1}\neq w_{\tau_2}$ for all $\tau_1\neq\tau_2$, completing the proof.
\end{proof}

\subsection{Triangulating normal complexes}

Our next aim is to construct a triangulation of the normal complex $C_{\Sigma,*}(z)$ for all pseudo-cubical values $z\in\overline\cC(\Sigma,*)$. To describe the triangulation, we first require some additional notation. Let $\sigma\in\Sigma(k)$, define $L(\sigma)$ to be the set of labeling bijections $f:\{1,\dots,k\}\rightarrow \sigma(1)$. For each $f\in L(\sigma)$ and $0\leq j\leq k$, let $\sigma(f,j)\preceq\sigma$ be the face of $\sigma$ with rays indexed by $\{f(i)\;|\;i\leq j\}$. Define polytopes
\[
\Delta(\sigma,f)=\conv(w_{\sigma(f,0)},\dots,w_{\sigma(f,k)}),
\]
where we note that the first vector is just the origins: $w_{\sigma(f,0)}=0$. In the next example, we depict how these polytopes fit together in the generic cubical setting.

\begin{example}
Let $N_\R=\R^3$, let $u_1,u_2,u_3$ be the standard basis vectors and let $\sigma$ be the first octant. Let $*$ be the dot product and set $z_1=z_2=z_3=1$. Then $P_{\sigma,*}(z)$ is the unit cube in $\R^3$ and for any face $\tau\preceq\sigma$, we have $w_\tau=\sum_{\rho_i\in \tau(1)}u_i$. Each labeling function $f\in L(\sigma)$ determines a simplex, and these simplices (along with their faces) triangulate the unit cube as depicted below. Note that the origin is in the lower left-hand corner of this image and the vector $(1,1,1)$ is in the upper right-hand corner.

\begin{center}
\begin{tikzpicture}
\node at (0,0) {\includegraphics[width=.25\textwidth]{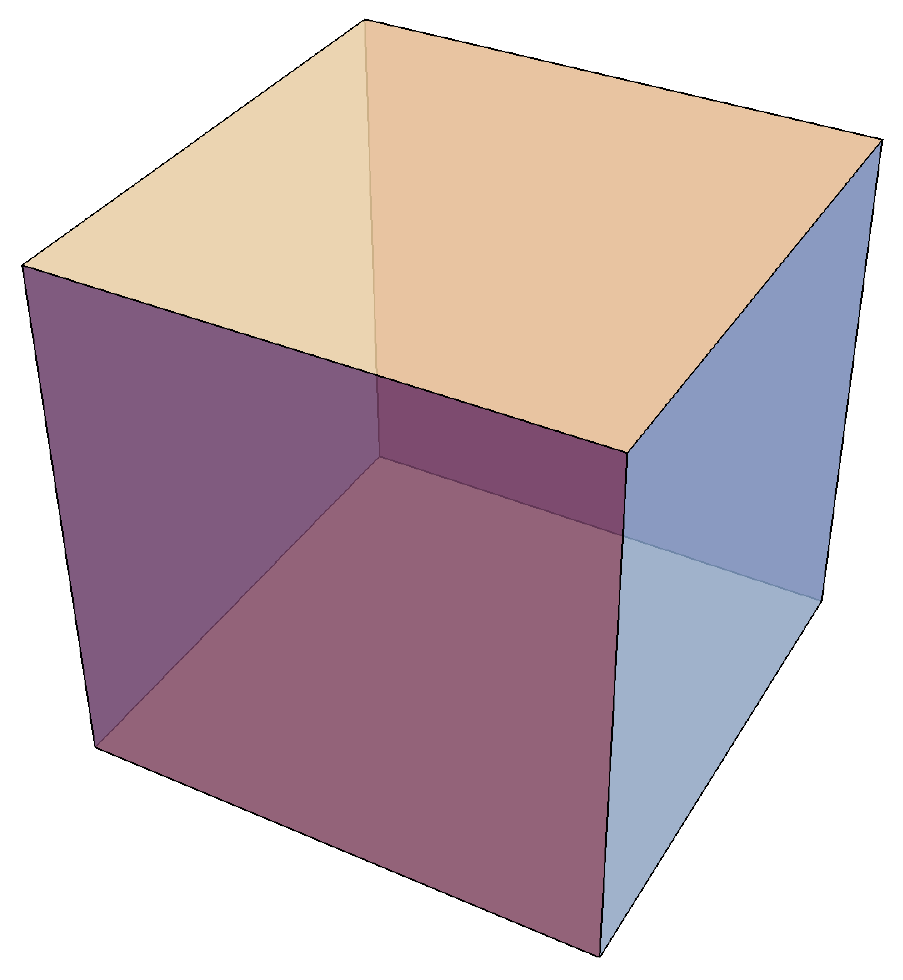}};
\draw[->] (3,0) -- (4,0);
\node at (7,0) {\includegraphics[width=.3\textwidth]{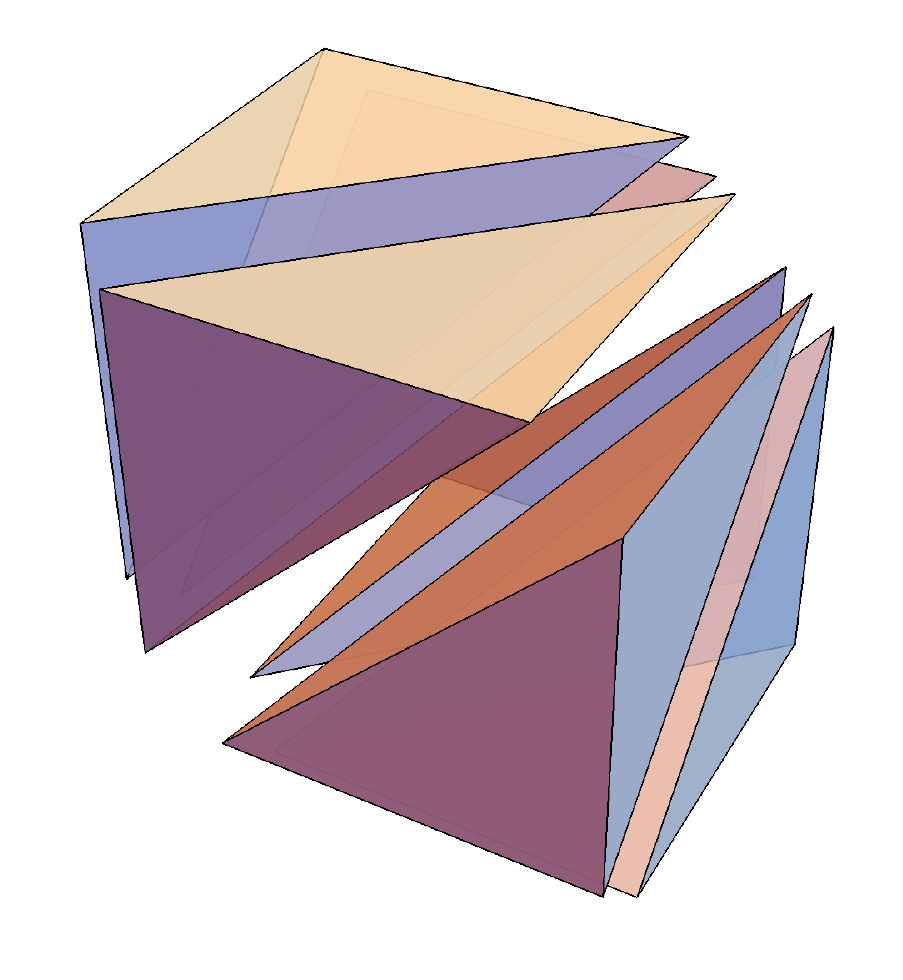}};
\end{tikzpicture}
\end{center}
\end{example}

While the previous example illustrated how the polytopes $\Delta(\sigma,f)$ fit together in the generic cubical setting, the situation can become much more degenerate in the pseudo-cubical setting, when some of the vertices of $P_{\sigma,*}(z)$ are allowed to coincide. We give one particularly degenerate illustration in the next example.

\begin{example}
Let $N_\R=\R^3$, let $u_1=(1,0,0)$, $u_2=(1,1,0)$, and $u_3=(1,1,1)$, and set $\sigma=\cone(u_1,u_2,u_3)$. Let $*$ be the dot product and set $z_1=1$, $z_2=2$, and $z_3=3$. Then $P_{\sigma,*}(z)=\conv(0,u_1,u_2,u_3)$, which we have depicted below.

\begin{center}
\includegraphics[width=.25\textwidth]{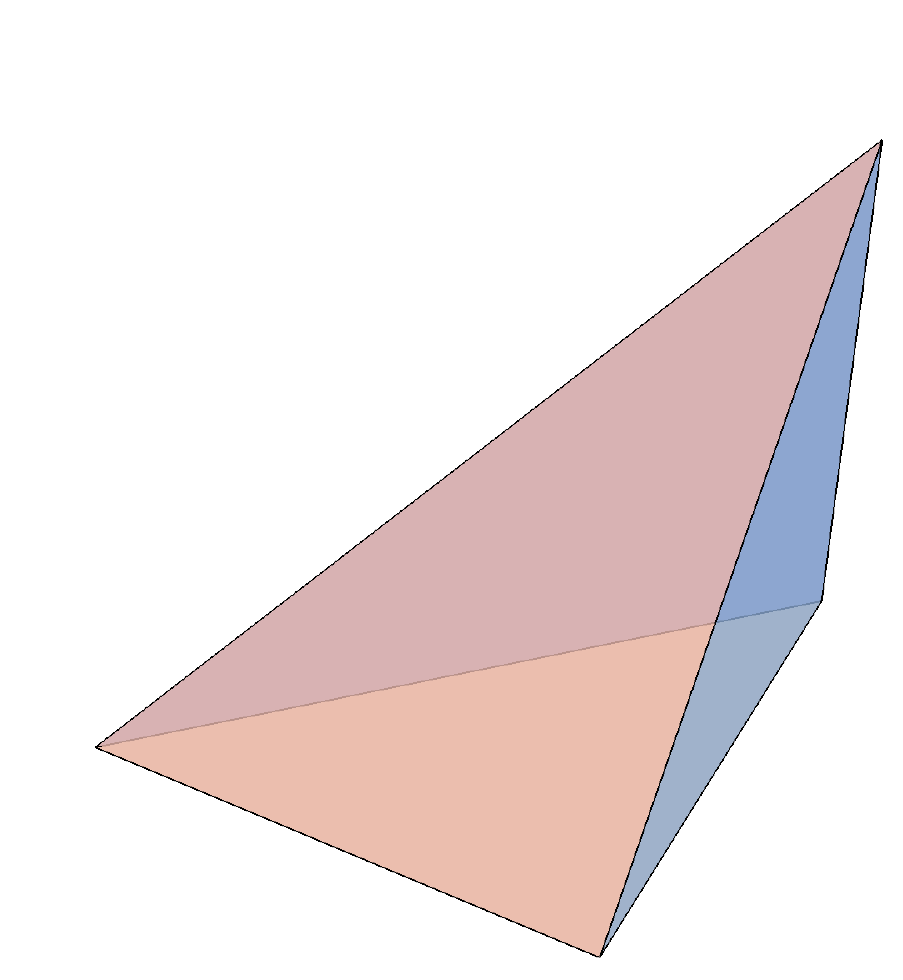}
\end{center}
It can be checked that, for any face $\tau\preceq\sigma$, we have
\[
w_\tau=\begin{cases}
u_1 & \tau(1)=\{\rho_1\},\\
u_2 & \rho_2\in\tau(1)\text{ and }\rho_3\notin\tau(1),\\
u_3 & \rho_3\in\tau(1).
\end{cases}
\]
It follows that $\Delta(\sigma,f)=P_{\sigma,*}(z)$ if $f(i)=\rho_i$ for each $i$, and for every other labeling function, $\Delta(\sigma,f)$ is a proper face of $P_{\sigma,*}(z)$. Even though there is a lot of redundancy in this pseudo-cubical setting, the simplices $\Delta(\sigma,f)$ (along with their faces) still triangulate $P_{\sigma,*}(z)$.
\end{example}

In each of the previous two examples, we saw that the simplices $\Delta(\sigma,f)$ and their faces triangulate the polytope $P_{\sigma,*}(z)$---we now aim to prove this in general. For each $\sigma\in\Sigma$ and $f\in L(\sigma)$, let $\widehat\Delta(\sigma,f)$ denote the polytopal complex consisting of $\Delta(\sigma,f)$ and its faces. The next result will be key to computing volumes of normal complexes in the next section.

\begin{proposition}\label{prop:triangulate}
For any $z\in\overline\cC(\Sigma,*)$, the collection
\[
\bigcup_{\sigma\in\Sigma(d)\atop f\in L(\sigma)}\widehat\Delta(\sigma,f)
\]
is a triangulation of $C_{\Sigma,*}(z)$. Furthermore, the function
\[
f\in L(\sigma)\mapsto\Delta(\sigma,f)\in\{\text{polytopes in }N_{\sigma,\R}\}
\]
is injective when restricted to the preimage of $d$-dimensional polytopes.
\end{proposition}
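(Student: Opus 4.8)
The plan is to prove the two assertions in turn, reducing the triangulation claim to a single maximal cone and then inducting on its dimension. First I would note that every vertex of $\Delta(\sigma,f)$ is a vertex of $P_{\sigma,*}(z)$ by Proposition~\ref{prop:vertices}, so by convexity $\Delta(\sigma,f)\subseteq P_{\sigma,*}(z)$, and it suffices to show, for each $\sigma\in\Sigma(d)$, that the simplices $\{\Delta(\sigma,f)\mid f\in L(\sigma)\}$ and their faces triangulate the single polytope $P_{\sigma,*}(z)$. The global statement then follows from Proposition~\ref{prop:polycomplex} together with the fact that the construction $f\mapsto\Delta(\sigma,f)$ is \emph{local}: the simplices lying in a shared face $P_{\tau,*}(z)=P_{\sigma_1,*}(z)\cap P_{\sigma_2,*}(z)$ of two maximal cones are exactly the $\Delta(\tau,g)$, $g\in L(\tau)$, and hence do not depend on the ambient cone. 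This compatibility on shared faces will fall out of the inductive step below.

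For a single cone I would argue by induction on $k=\dim\sigma$, treating first the cubical case $z\in\cC(\Sigma,*)$. The engine is the star (pyramid) decomposition of $P_{\sigma,*}(z)$ from the vertex $w_\sigma$. Grouping labelings by their last value $f(k)=\rho$ and using $w_{\sigma(f,0)}=0$ and $\sigma(f,k)=\sigma$, one obtains the pyramid identity
\[
\bigcup_{f\in L(\sigma),\,f(k)=\rho}\Delta(\sigma,f)=\mathrm{pyr}\Big(w_\sigma,\ \bigcup_{f'\in L(\sigma\setminus\rho)}\Delta(\sigma\setminus\rho,f')\Big),
\]
since forming the join with the fixed apex $w_\sigma$ commutes with unions. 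By the induction hypothesis applied to the facet $F_\rho^0=P_{\sigma\setminus\rho,*}(z)$ (the equality $F_\rho^0=P_{\sigma\setminus\rho,*}(z)$ is established inside the proof of Proposition~\ref{prop:vertices}), the inner union is a triangulation of $F_\rho^0$, so each group assembles into a triangulated pyramid. Taking the union over $\rho$ and using that $w_\sigma$ lies on every hyperplane $\{v*u_\rho=z_\rho\}$, hence on every facet $F_\rho^1$ and on no facet $F_\rho^0$, the facets of $P_{\sigma,*}(z)$ not containing $w_\sigma$ are precisely the $F_\rho^0$; the standard vertex-cone decomposition then gives $\bigcup_\rho\mathrm{pyr}(w_\sigma,F_\rho^0)=P_{\sigma,*}(z)$. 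In the cubical case the apex $w_\sigma\in\sigma^\circ$ lies off the affine hull $N_{\sigma\setminus\rho,\R}$ of each base, so every pyramid is nondegenerate; that the pyramids meet face-to-face reduces, via the inductive hypothesis, to the agreement of the triangulations of $F_\rho^0$ and $F_{\rho'}^0$ along their common face $F_\rho^0\cap F_{\rho'}^0=P_{\sigma\setminus\{\rho,\rho'\},*}(z)$. This is exactly the locality property, and it simultaneously establishes the compatibility across cones needed in the first paragraph.

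To pass from cubical to pseudo-cubical $z\in\overline\cC(\Sigma,*)$ I would use that $\cC(\Sigma,*)=\overline\cC(\Sigma,*)^\circ$ (Proposition~\ref{prop:cubecone}), so $z$ is a limit of cubical values $z^{(m)}$, and that each vertex $w_\tau$, hence each simplex $\Delta(\sigma,f)$ and each polytope $P_{\sigma,*}(z)$, depends continuously (indeed linearly) on $z$. The covering statement is robust and needs no limiting argument: the pyramid identity and the vertex-cone decomposition hold verbatim for pseudo-cubical $z$ (allowing degenerate pyramids when $w_\sigma$ slides onto some $N_{\sigma\setminus\rho,\R}$), since the decomposition only requires that the $F_\rho^0$ include all facets missing $w_\sigma$, which remains true because $w_\sigma\in F_\rho^1$ always. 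The genuinely delicate point, and the main obstacle, is the face-to-face property once vertices are permitted to coincide and simplices to degenerate. Here I would take Hausdorff limits along $z^{(m)}\to z$: full-dimensionality is detected by nonvanishing of the determinant of a simplex's vertex matrix, a condition open and continuous in $z$, so full-dimensional simplices converge to full-dimensional ones and their pairwise-disjoint interiors stay disjoint in the limit; and the common face $\Delta(\sigma,f)(z^{(m)})\cap\Delta(\sigma,g)(z^{(m)})$ at each level is spanned by a subset of the converging vertices that stabilizes, whose limit is simultaneously a face of both limit simplices. The degenerate $\Delta(\sigma,f)$ are then absorbed as faces, consistent with the redundancy already noted after Proposition~\ref{prop:vertices}.

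Finally, for the injectivity statement I would work cone-by-cone. If $\Delta(\sigma,f)$ is $d$-dimensional then its vertices $0=w_{\sigma(f,0)},w_{\sigma(f,1)},\dots,w_{\sigma(f,k)}=w_\sigma$ are affinely independent, and since $w_{\sigma(f,i)}\in N_{\sigma(f,i),\R}$, a dimension count forces $\mathrm{span}(w_{\sigma(f,0)},\dots,w_{\sigma(f,j)})=N_{\sigma(f,j),\R}$ for every $j$. Thus the simplex determines the complete flag $0\subset N_{\sigma(f,1),\R}\subset\cdots\subset N_{\sigma(f,k),\R}$ of coordinate subspaces, which is equivalent to the increasing chain of faces $\sigma(f,1)\prec\cdots\prec\sigma(f,k)$ and hence to the bijection $f$ itself. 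This recovers $f$ from $\Delta(\sigma,f)$ and proves injectivity on the preimage of the $d$-dimensional polytopes.
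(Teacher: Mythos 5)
Your strategy is genuinely different from the paper's: you triangulate each $P_{\sigma,*}(z)$ by a star decomposition from the single apex $w_\sigma$, induct on $\dim\sigma$, and then pass from cubical to pseudo-cubical values by a Hausdorff-limit argument. The paper instead performs an iterated sequence of \emph{pulling subdivisions} of $\widehat P_{\sigma,*}(z)$, first at $0$, then at the vertices $w_\tau$ with $\tau\in\sigma(1)$, then $\sigma(2)$, and so on; the formal properties of pulling subdivisions (support is preserved, pulled vertices become and remain apexes) yield the triangulation \emph{directly at any pseudo-cubical value}, with degeneracies causing no harm. This difference matters, because the degenerate case is exactly where your argument has a gap. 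Your limiting argument correctly shows that disjointness of interiors survives the limit and that $\lim_m\bigl(\Delta(\sigma,f)(z^{(m)})\cap\Delta(\sigma,g)(z^{(m)})\bigr)$ is a common face of the two limit simplices; but what must be shown is that $\Delta(\sigma,f)(z)\cap\Delta(\sigma,g)(z)$ \emph{equals} that common face. Intersections of convex bodies are not continuous under Hausdorff convergence: a point of $\Delta(\sigma,f)(z)\cap\Delta(\sigma,g)(z)$ is a limit of points $p_m\in\Delta(\sigma,f)(z^{(m)})$ and $q_m\in\Delta(\sigma,g)(z^{(m)})$ with $p_m\neq q_m$, so the intersection can strictly contain the limit of the intersections (e.g.\ two simplices meeting along proper subsegments of edges in the limit). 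Since the face-to-face property in the presence of coinciding vertices is the entire difficulty of the pseudo-cubical case, this step needs a real argument, not a continuity appeal.

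The injectivity argument also has a gap. Your dimension count correctly shows that, \emph{given the ordering of the vertices by $f$}, one has $\mathrm{span}(w_{\sigma(f,0)},\dots,w_{\sigma(f,j)})=N_{\sigma(f,j),\R}$ for all $j$. But the conclusion ``thus the simplex determines the complete flag'' does not follow: the simplex only hands you the unordered vertex set, and the flag is defined through the ordering, which is precisely what must be recovered. If $\Delta(\sigma,f)=\Delta(\sigma,g)$, the vertex sets agree up to an unknown permutation, and nothing in your argument rules out two different orderings each producing a chain of coordinate subspaces. The paper closes exactly this loop using the faces $F_{\emptyset,S}$: it shows $\dim(F_{\emptyset,S})\leq d-|S|$ with equality forcing $F_{\emptyset,S}\subseteq F_\rho^1$ if and only if $\rho\in S$, so that $f(1)$ is recovered as the unique ray $\rho$ with all nonzero vertices lying on the hyperplane $\{v* u_\rho=z_\rho\}$, and then $f(2),\dots,f(d)$ are recovered inductively. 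Some such input (the incidence of the vertices with the facets $F_\rho^1$, or the positivity of the top coordinate of each $w_{\sigma(f,j)}$ coming from pseudo-cubicality) is needed to pin down the ordering; as written, your injectivity proof asserts the key step rather than proving it.
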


\begin{proof}
Fix $\sigma\in\Sigma(d)$ and consider the polytope $P_{\sigma,*}(z)$. We prove that the collection
\begin{equation}\label{eq:triangulation}
\bigcup_{f\in L(\sigma)}\widehat\Delta(\sigma,f)
\end{equation}
is a triangulation of $\widehat P_{\sigma,*}(z)$. To do so, we argue that \eqref{eq:triangulation} results from a sequence of \emph{pulling subdivisions}---a procedure that we now recall. 

If $C$ is a polytopal complex and $v\in C$ is a vertex, then the \textbf{pulling subdivision of $C$ at $v$}, denoted $\mathrm{pull}_vC$, is the polytopal complex obtained by replacing every polytope $P\in C$ that contains $v$ with the collection of pyramids $\conv(v,F)$ for all proper faces $F\prec P$. For example, the pulling subdivision of a polygon (and its faces) at a vertex is the triangulation obtained by connecting that vertex to every other vertex of the polygon. A few important properties of pulling subdivisions that can be readily checked from the definition are:
\begin{enumerate}
\item The polytopal complexes $C$ and $\mathrm{pull}_vC$ have the same support;
\item The vertex $v$ is an \emph{apex} of $\mathrm{pull}_vC$, meaning that $v$ is adjacent to every vertex in every polytope of $\mathrm{pull}_vC$ that contains $v$;
\item If $v$ is an apex of $C$, then $\mathrm{pull}_vC=C$;
\item If $w$ is an apex of $C$, then it is also an apex of $\mathrm{pull}_vC$.
\end{enumerate}
It follows from these properties that sequentially performing a pulling subdivisions at every vertex of a polytopal complex results in a polytopal complex for which every vertex is an apex; in other words, it results in a triangulation.

We now claim that \eqref{eq:triangulation} is obtained by an iterated sequence of pulling subdivisions of $\widehat P_{\sigma,*}(z)$, where we first subdivide at the vertex $w_{\{0\}}=0$, then at the vertices $\{w_\tau\;|\;\tau\in\sigma(1)\}$ (in any order), then at the vertices $\{w_\tau\;|\;\tau\in\sigma(2)\}$ (in any order), and so forth. To prove this, let $\widehat P_{\sigma,*}(z)_k$ denote the polytopal complex resulting from the first $k$ steps of this process; we claim that
\begin{equation}\label{eq:trialg}
\widehat P_{\sigma,*}(z)_k=\bigcup_{\pi\in\sigma(k)\atop f\in L(\pi)}\widehat P(\pi,f)
\end{equation}
where
\[
P(\pi,f)=\conv(w_{\pi(f,0)},\dots,w_{\pi(f,k-1)},F_{\emptyset,\pi(1)}),
\]
and the faces $F_{\emptyset,\pi(1)}\preceq P_{\sigma,*}(z)$ are those described in Proposition~\ref{prop:vertices}. Upon observing that $P(\sigma,f)=\Delta(\sigma,f)$, we see that the triangulation \eqref{eq:triangulation} is the $k=d$ case of \eqref{eq:trialg}.

We prove \eqref{eq:trialg} by induction on $k$. For the base case $k=0$, it suffices to notice that 
\[
P_{\sigma,*}(z)=F_{\emptyset,\emptyset}=\conv(F_{\emptyset,\emptyset}).
\]
To prove the induction step, assume that \eqref{eq:trialg} holds for some $k$. By definition, $\widehat P_{\sigma,*}(z)_{k+1}$ is the pulling subdivision of $\widehat P_{\sigma,*}(z)_{k}$ at $\{w_\tau\;|\;\tau\in\sigma(k)\}$, so using the induction hypothesis, we can compute this in terms of the right-hand side of \eqref{eq:trialg}. Fix $\tau\in\sigma(k)$. To compute $\mathrm{pull}_{w_\tau}\widehat P_{\sigma,*}(z)_k$, we first identify which polytopes $P(\pi,f)$ in the right-hand side of \eqref{eq:trialg} contain $w_\tau$ as a vertex. There are two possiblilities: either $w_\tau=w_{\pi(f,i)}$ for some $\pi$, $f$, and $i$, or $\pi=\tau$, in which case $w_\tau\in F_{\emptyset,\pi(1)}$. In the first case, $w_\tau$ is already an apex of $\widehat P(\pi,f)$, so $\mathrm{pull}_{w_\tau}\widehat P(\pi,f)=\widehat P(\pi,f)$. Thus, it remains to compute the pulling subdivision in the second case:  $\mathrm{pull}_{w_\tau}\widehat P(\tau,f)$.

To compute $\mathrm{pull}_{w_\tau}\widehat P(\tau,f)$, notice that $w_\tau\in F_{\emptyset,\tau(1)}$ and, by Proposition~\ref{prop:vertices}, every face of $F_{\emptyset,\tau(1)}$ that does not contain $w_\tau$ is contained in some face of the form $F_{\emptyset,\tau(1)\cup\{\rho\}}$ for some $\rho\in\sigma(1)\setminus\tau(1)$. It follows that every face of $P(\tau,f)$ that does not contain $w_\tau$ is contained in some face of the form
\[
\conv(w_{\tau(f,0)},\dots,w_{\tau(f,k-1)},F_{\emptyset,\tau(1)\cup\{\rho\}}).
\]
Noting that $\tau=\tau(f,k)$, it then follows from the definition of the pulling subdivision that
\[
\mathrm{pull}_{w_\tau}\widehat P(\tau,f)=\bigcup_{\rho\in\sigma(1)\setminus\tau(1)}\widehat \conv(w_{\tau(f,0)},\dots,w_{\tau(f,k-1)},w_{\tau(f,k)},F_{\emptyset,\tau(1)\cup\{\rho\}}).
\]
Varying over all $f\in L(\tau)$ and $\tau\in\sigma(k)$, it then follows that
\[
\widehat P_{\sigma,*}(z)_{k+1}=\bigcup_{\pi\in\sigma(k+1)\atop f\in L(\pi)}\widehat P(\pi,f),
\]
completing the induction step.

To prove the final statement in the proposition, the key observation we require is that the face $F_{\emptyset,S}$ has dimension at most $d-|S|$, and whenever $\dim(F_{\emptyset,S})=d-|S|$, we have
\begin{equation}\label{eq:facecontain}
F_{\emptyset,S}\subseteq F_{\emptyset,\{\rho\}}=F_\rho^1\;\;\;\text{ if and only if }\;\;\;\rho\in S.
\end{equation}
In other words, if $\dim(F_{\emptyset,S})=d-|S|$, then $F_{\emptyset,S}$ uniquely determines the set of rays in $S$. Property \eqref{eq:facecontain} follows from the fact---discussed in the proof of Proposition~\ref{prop:vertices}---that the facets of $P_{\sigma,*}(z)$ are a subset of the faces of the form $F_{\rho}^0$ and $F_{\rho}^1$. 

Now suppose that $\Delta(\sigma,f)$ is $d$-dimensional; we must prove that $f$ is uniquely determined by $\Delta(\sigma,f)$. Using that $\{w_{\sigma(f,1)},\dots,w_{\sigma(f,d)}\}$ are linearly independent and contained in $F_{\emptyset,\sigma(f,1)}$, we see that $\dim(F_{\emptyset,\sigma(f,1)})=d-1$. By \eqref{eq:facecontain}, this implies that the face spanned by the nonzero vectors in $\Delta(\sigma,f)$ uniquely determines $\sigma(f,1)$, and thus determines $f(1)$. Next, suppose we have used $\Delta(\sigma,f)$ to uniquely determine $f(1),\dots,f(k-1)$; we must show that we can then uniquely determine $f(k)$. Using that $\{w_{\sigma(f,k)},\dots,w_{\sigma(f,d)}\}$ are linearly independent and contained in $F_{\emptyset,\sigma(f,k)(1)}$, we see that $\dim(F_{\emptyset,\sigma(f,k)(1)})=d-k$. By \eqref{eq:facecontain}, this implies that the face spanned by $\{w_{\sigma(f,k)},\dots,w_{\sigma(f,d)}\}$ uniquely determines $\sigma(f,k)$, thereby determining $f(k)$. This completes the induction step, finishing the proof.
\end{proof}

\section{Volume computations}\label{sec:computingvolumes}

The main result of this section is the derivation of an explicit formula for weighted volumes of normal complexes. Throughout this section, let $\Sigma\subseteq N_\R$ denote a simplicial $d$-fan, and let $*\in\Inn(N_\R)$ be an inner product.

\subsection{Normalizing volume}

In this subsection, we discuss a preparatory result that allows us to compute normalized volumes using determinants. Let $\sigma\in\Sigma{(k)}$ and consider the vector space $N_{\sigma,\R}$. Notice that any volume function $\big\{\text{polytopes in }N_{\sigma,\R}\big\}\rightarrow\R_{\geq 0}$ is uniquely determined by its value on the simplex $\Delta_\sigma=\conv(\{0\}\cup\{u_\rho\;|\;\rho\in\sigma(1)\})$, and any two volume functions differ by a scalar multiple. By restricting the inner product $*\in\Inn(N_\R)$, we obtain an inner product $*\in\Inn(N_{\sigma,\R})$, and this inner product allows us to define the \emph{Euclidean (simplicial) volume function}
\[
\vol_{\sigma}:\big\{\text{polytopes in }N_{\sigma,\R}\big\}\rightarrow\R_{\geq 0},
\]
which is normalized so that the simplex associated to any orthonormal basis (with respect to $*$) has unit volume. A linear algebra exercise shows that the Euclidean volume of the simplex $\Delta_\sigma$ is given by the formula
\begin{equation}\label{eq:simplexeuclideanvolume}
\vol_{\sigma}(\Delta_\sigma)=\sqrt{\det(G_{\sigma})}
\end{equation}
where $G_\sigma$ is the Gram matrix
\[
G_\sigma=(u_{\rho}* u_\eta)_{\rho,\eta\in\sigma(1)}.
\]
In regards to computing volumes of normal complexes, we require the volume function 
\[
\Vol_{\sigma}:\big\{\text{polytopes in }N_{\sigma,\R}\big\}\rightarrow\R_{\geq 0},
\]
which is normalized so that the volume of a fundamental simplex of the lattice $M_\sigma\subseteq N_{\sigma,\R}$ has unit volume. The next result allows us to compute the volume function $\Vol_{\sigma}(-)$ in terms of the Euclidean volume function $\vol_{\sigma}(-)$.

\begin{lemma}\label{lemma.normalizedvolume}
For any polytope $P\subseteq N_{\sigma,\R}$, we have
\[
\Vol_{\sigma}(P)=\sqrt{\det(G_{\sigma})}\vol_{\sigma}(P).
\]
\end{lemma}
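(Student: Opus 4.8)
The plan is to exploit that $N_{\sigma,\R}$ is $k$-dimensional, so that—as noted in the paragraph preceding the lemma—any two volume functions on its polytopes agree up to a positive scalar. Consequently it suffices to pin down the single ratio $\Vol_\sigma(P)/\vol_\sigma(P)$, and this ratio can be read off by evaluating both functions on one convenient polytope. Since $\Vol_\sigma$ is normalized so that a fundamental simplex of $M_\sigma$ has unit volume, I would take that polytope to be a fundamental simplex $\Delta_{M_\sigma}$ of the lattice $M_\sigma\subseteq N_{\sigma,\R}$, which reduces the lemma to the single computation $\vol_\sigma(\Delta_{M_\sigma})=1/\sqrt{\det G_\sigma}$; indeed, this gives $\Vol_\sigma(P)=\vol_\sigma(P)/\vol_\sigma(\Delta_{M_\sigma})=\sqrt{\det G_\sigma}\,\vol_\sigma(P)$.

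The key step is to produce an explicit lattice basis of $M_\sigma$ transported into $N_{\sigma,\R}$. Because $\sigma$ is simplicial, the generators $\{u_\rho\;|\;\rho\in\sigma(1)\}$ are linearly independent, so they form a $\Z$-basis of $N_\sigma=\mathrm{span}_\Z(u_\rho)$. Let $\{w_\rho\}\subseteq N_{\sigma,\R}$ be the corresponding dual basis of $M_\sigma$ after identifying $M_{\sigma,\R}$ with $N_{\sigma,\R}$ via $*$, characterized by $w_\rho* u_\eta=\delta_{\rho\eta}$, so that $\Delta_{M_\sigma}=\conv(\{0\}\cup\{w_\rho\})$. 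Writing $w_\rho=\sum_\eta A_{\rho\eta}u_\eta$ and pairing against the $u_\eta$ yields $A\,G_\sigma=I$, hence $A=G_\sigma^{-1}$. A short computation then gives the Gram matrix of the dual basis:
\[
(w_\rho* w_\eta)_{\rho,\eta}=A\,G_\sigma\,A^{T}=G_\sigma^{-1}G_\sigma\,(G_\sigma^{-1})^{T}=G_\sigma^{-1},
\]
using that $G_\sigma$ is symmetric.

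Applying the volume formula \eqref{eq:simplexeuclideanvolume}—which by the same linear algebra exercise computes $\vol_\sigma$ of the simplex spanned by any set of vectors as the square root of the determinant of their Gram matrix—to the basis $\{w_\rho\}$ gives
\[
\vol_\sigma(\Delta_{M_\sigma})=\sqrt{\det(G_\sigma^{-1})}=\frac{1}{\sqrt{\det G_\sigma}},
\]
and substituting this into the reduction above completes the proof.

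The main obstacle—really the only nontrivial point—is the dual-basis Gram matrix computation together with correctly transporting $M_\sigma$ into $N_{\sigma,\R}$ through the identification induced by $*$; once the relation $A=G_\sigma^{-1}$ is in hand, everything else is bookkeeping. I would also take care to confirm that the normalization conventions are consistent, but since both $\Vol_\sigma$ and $\vol_\sigma$ are \emph{simplicial} volumes, the factorial normalization cancels in their ratio and causes no difficulty.
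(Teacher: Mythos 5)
Your proof is correct and follows essentially the same route as the paper: both arguments pin down the scalar ratio between $\Vol_\sigma$ and $\vol_\sigma$ by evaluating on a single reference simplex built from the dual basis of $\{u_\rho\}$, with the relation to $G_\sigma$ doing all the work. The only cosmetic difference is that you evaluate at the fundamental simplex of $M_\sigma$ and compute its Euclidean volume via the dual Gram matrix $G_\sigma^{-1}$, whereas the paper evaluates at $\Delta_\sigma$ and gets $\Vol_\sigma(\Delta_\sigma)=\det(G_\sigma)$ from the determinant of the change-of-basis map $T_\sigma=G_\sigma$; the two computations are mirror images of one another.
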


\begin{proof}
Let $\{v_\eta\;|\;\eta\in\sigma\}\subseteq M_\sigma$ be the dual basis of $\{u_\rho\;|\;\rho\in\sigma(1)\}\subseteq N_\sigma$ determined by the property that
\[
u_\rho * v_\eta =\delta_{\rho,\eta},
\]
where $\delta_{\rho,\eta}$ is the Kronecker delta function. Using the identification of $M_{\sigma,\R}$ and $N_{\sigma,\R}$ given by the inner product $*$, write the change of basis transformation as $T_\sigma=(a_{\rho,\eta})_{\rho,\eta\in\sigma(1)}$ where
\[
u_\rho=\sum_{\eta\in\sigma(1)} a_{\rho,\eta} v_\eta.
\]
Notice that
\[
G_\sigma=(u_\rho * u_\eta)_{\rho,\eta\in\sigma(1)}=(a_{\rho,\eta})_{\rho,\eta\in\sigma(1)}=T_\sigma,
\]
where the second equality follows from replacing each $u_\rho$ with its expression in terms of $v_\eta$. Define 
\[
\Delta^\sigma=\conv(\{0\}\cup\{v_\eta\;|\;\eta\in\sigma\}),
\]
so that
\[
T_\sigma(\Delta^\sigma)=\Delta_{\sigma}.
\]
Using that linear transformations rescale volumes by the absolute value of their determinant, we see that
\begin{align*}
\Vol_{\sigma}(\Delta_\sigma)&=|\det(T_\sigma)|\Vol_{\sigma}(\Delta^{\sigma})\\
&=\det(G_\sigma)\Vol_{\sigma}(\Delta^{\sigma})\\
&=\det(G_\sigma).
\end{align*}
The second equality follows from the above observation that $G_\sigma=T_\sigma$ and the fact that Gram determinants are always nonnegative, and the third equality follows from the fact that $\Delta^{\sigma}$ is a unit simplex in $M_\sigma$. Therefore, combining these computations with \eqref{eq:simplexeuclideanvolume}, we see that
\[
\Vol_{\sigma}(\Delta_\sigma)=\sqrt{\det(G_{\sigma})}\vol_{\sigma}(\Delta_\sigma),
\]
proving that the scaling factor between these volume functions is $\sqrt{\det(G_{\sigma})}$.
\end{proof}

\subsection{Volumes of normal complexes}

We now present a formula for volumes of normal complexes. Recall that, for any pseudo-cubical value $z\in\overline\cC(\Sigma,*)$, the volume of the normal complex $C_{\Sigma,*}(z)$ weighted by $\omega$ is defined by
\[
\Vol(C_{\Sigma,*}(z);\omega)=\sum_{\sigma\in\Sigma{(d)}}\omega(\sigma)\Vol_{\sigma}(P_{\sigma,*}(z)).
\]
The next result computes an explicit formula for these volumes.

\begin{theorem}\label{thm:volumecomputation}
For any $z\in\overline\cC(\Sigma,*)$ and $\sigma\in\Sigma(k)$ we have
\[
\Vol_{\sigma}(P_{\sigma,*}(z))=\det(G_\sigma)\sum_{f\in L(\sigma)}\prod_{j=1}^k\frac{\det(G_{\sigma(f,j),f(j)})}{\det(G_{\sigma(f,j)})}
\]
where the notation is defined as follows:
\begin{itemize}
\item for $\sigma\in\Sigma(k)$, the set $L(\sigma)$ is the set of bijections $f:\{1,\dots,k\}\rightarrow\sigma(1)$;
\item for $f\in L(\sigma)$ and $1\leq j\leq k$, the cone $\sigma(f,j)\preceq\sigma$ has rays indexed by $\{{f(i)}\;|\;i\leq j\}$;
\item the matrix $G_\sigma$ is defined by $G_\sigma=(u_\rho*u_\eta)_{\rho,\eta\in\sigma(1)}$;
\item the matrix $G_{\sigma,\rho}(z)$ is obtained by replacing the $\rho$th column of $G_\sigma$ with $z_\sigma=(z_\eta)_{\eta\in\sigma(1)}$.
\end{itemize}
\end{theorem}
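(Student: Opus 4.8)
The plan is to compute the volume cone-by-cone using the explicit triangulation from Proposition~\ref{prop:triangulate}. Fix $\sigma\in\Sigma(k)$ and work inside $N_{\sigma,\R}$, where $P_{\sigma,*}(z)$ is $k$-dimensional; the same pulling-subdivision argument as in Proposition~\ref{prop:triangulate} (with $d$ replaced by $k$) shows that $\bigcup_{f\in L(\sigma)}\widehat\Delta(\sigma,f)$ triangulates $\widehat P_{\sigma,*}(z)$, and that $f\mapsto\Delta(\sigma,f)$ is injective on the preimage of $k$-dimensional simplices. The maximal simplices of a triangulation have disjoint interiors and cover $P_{\sigma,*}(z)$, each $k$-dimensional $\Delta(\sigma,f)$ arises from a unique $f$, and the degenerate simplices contribute zero volume. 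Hence it suffices to establish, for every $f\in L(\sigma)$, the factorized identity
\[
\Vol_\sigma(\Delta(\sigma,f))=\det(G_\sigma)\prod_{j=1}^k\frac{\det(G_{\sigma(f,j),f(j)}(z))}{\det(G_{\sigma(f,j)})}
\]
and then sum over $f\in L(\sigma)$. Throughout I abbreviate $\tau_j=\sigma(f,j)$, so that $\tau_0\prec\tau_1\prec\cdots\prec\tau_k=\sigma$ with $\tau_j(1)=\{f(1),\dots,f(j)\}$ and $w_{\tau_0}=0$.

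The key geometric input is an orthogonal decomposition of $\Delta(\sigma,f)$. Set $d_j=w_{\tau_j}-w_{\tau_{j-1}}$. Since $\tau_{j-1}=\tau_j\setminus f(j)$, observation~(2) in the proof of Proposition~\ref{prop:charcub} shows that $d_j$ is normal to $N_{\tau_{j-1},\R}$; as $d_i\in N_{\tau_i,\R}\subseteq N_{\tau_{j-1},\R}$ for $i<j$, the increments $d_1,\dots,d_k$ are mutually orthogonal with respect to $*$. Because $w_{\tau_j}=\sum_{i\le j}d_i$, the passage from the $d_i$ to the vertices $w_{\tau_i}$ is unitriangular, so the Euclidean simplicial volume satisfies $\vol_\sigma(\Delta(\sigma,f))=\prod_{j=1}^k\|d_j\|$, where $\|\cdot\|$ is induced by $*$.

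It then remains to compute each length $\|d_j\|$. Writing $w_{\tau_j}=\sum_{\rho\in\tau_j(1)}c_\rho u_\rho$, the defining equations $w_{\tau_j}*u_\eta=z_\eta$ (for $\eta\in\tau_j(1)$) form a linear system in the $c_\rho$ with coefficient matrix $G_{\tau_j}$ and right-hand side $z_{\tau_j}=(z_\eta)_{\eta\in\tau_j(1)}$, so Cramer's rule gives $\alpha_j:=c_{f(j)}=\det(G_{\tau_j,f(j)}(z))/\det(G_{\tau_j})$; since $u_{f(j)}$ does not appear in $w_{\tau_{j-1}}$, this $\alpha_j$ is also the $u_{f(j)}$-coefficient of $d_j$. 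Decomposing $d_j=\alpha_j u_{f(j)}+p_j$ with $p_j\in N_{\tau_{j-1},\R}$, and letting $n_j$ be the unit normal to $N_{\tau_{j-1},\R}$ inside $N_{\tau_j,\R}$ pointing toward $u_{f(j)}$, orthogonality gives $d_j=(d_j*n_j)n_j$ and $d_j*n_j=\alpha_j(u_{f(j)}*n_j)=\alpha_j h_j$, where $h_j=u_{f(j)}*n_j$ is the distance from $u_{f(j)}$ to $N_{\tau_{j-1},\R}$. The standard Gram-determinant height formula yields $h_j=\sqrt{\det(G_{\tau_j})/\det(G_{\tau_{j-1}})}$, so $\|d_j\|=|\alpha_j|\,h_j$. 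The pseudo-cubical hypothesis forces $w_{\tau_j}\in\tau_j$, hence $\alpha_j\ge 0$, allowing us to drop the absolute value. Multiplying over $j$, the heights telescope to $\prod_j h_j=\sqrt{\det(G_\sigma)}$ (using $\det(G_{\tau_0})=1$), giving $\vol_\sigma(\Delta(\sigma,f))=\sqrt{\det(G_\sigma)}\prod_j\det(G_{\tau_j,f(j)}(z))/\det(G_{\tau_j})$; finally Lemma~\ref{lemma.normalizedvolume} rescales $\vol_\sigma$ to $\Vol_\sigma$ by the factor $\sqrt{\det(G_\sigma)}$, producing the claimed $\det(G_\sigma)$ prefactor and completing the per-simplex identity.

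I expect the main obstacle to be the orthogonal-increment computation of $\|d_j\|$: specifically, correctly combining the Cramer's-rule coefficient $\alpha_j$ with the Gram height $h_j$ and verifying that the product telescopes, together with the sign bookkeeping that lets pseudo-cubicality remove the absolute values, so that the signed Gram ratios appearing in the statement coincide with the intrinsically nonnegative simplicial volumes.
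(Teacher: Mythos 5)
Your proof is correct and follows the paper's overall skeleton --- triangulate $P_{\sigma,*}(z)$ via Proposition~\ref{prop:triangulate}, reduce to a per-simplex identity for $\Delta(\sigma,f)$, extract the coefficients $a_{\sigma(f,j),f(j)}$ by Cramer's rule, and rescale by Lemma~\ref{lemma.normalizedvolume} --- but the middle step, evaluating $\vol_\sigma(\Delta(\sigma,f))$, is done by a genuinely different computation. The paper introduces the linear map $T_f$ sending the reference simplex $\Delta_\sigma=\conv(\{0\}\cup\{u_\rho\})$ to $\Delta(\sigma,f)$, observes that the coefficient matrix $\big(a_{\sigma(f,j),\rho}\big)$ is triangular with respect to the ordering induced by $f$, and reads off $|\det(T_f)|=\prod_j a_{\sigma(f,j),f(j)}$ directly, with the factor $\sqrt{\det(G_\sigma)}$ entering once through $\vol_\sigma(\Delta_\sigma)$. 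You instead decompose the simplex by the increments $d_j=w_{\sigma(f,j)}-w_{\sigma(f,j-1)}$, whose mutual orthogonality is indeed exactly observation~(2) in the proof of Proposition~\ref{prop:charcub}, and recover the same product via the Gram height formula and the telescoping $\prod_j\sqrt{\det(G_{\sigma(f,j)})/\det(G_{\sigma(f,j-1)})}=\sqrt{\det(G_\sigma)}$. Your route costs an extra standard input (the base-times-height identity for Gram determinants) and slightly more bookkeeping, but it buys a more geometric picture: it exhibits $\Delta(\sigma,f)$ as an orthogonal ``staircase'' of the $w$-vertices and makes transparent both why degenerate simplices contribute zero and why the Gram ratios in the statement are intrinsically the squared heights of the flag of faces. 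Both arguments handle the sign issue identically, using pseudo-cubicality to guarantee $a_{\sigma(f,j),f(j)}\geq 0$.
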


\begin{proof}
By Proposition~\ref{prop:triangulate}, we can write
\[
\Vol_{\sigma}(P_{\sigma,*}(z))=\sum_{f\in L(\sigma)}\Vol_{\sigma}(\Delta(\sigma,f)),
\]
where
\[
\Delta(\sigma,f)=\conv\big(w_{\sigma(f,0)},\dots,w_{\sigma(f,k)}\big).
\]
Therefore, it suffices to prove that 
\[
\Vol_{\sigma}(\Delta(\sigma,f))=\det(G_\sigma)\prod_{j=1}^k\frac{\det(G_{\sigma(f,j),f(j)})}{\det(G_{\sigma(f,j)})}.
\]

It follows from Lemma~\ref{lemma.normalizedvolume} that
\[
\Vol_{\sigma}(\Delta(\sigma,f))=\sqrt{\det(G_\sigma)}\vol_{\sigma}(\Delta(\sigma,f)).
\]
In order to compute the Euclidean volume of $\Delta(\sigma,f)$, we start by writing each $w_\tau$ as a linear combination of the vectors $\{u_\rho\;|\;\rho\in\tau(1)\}$:
\[
w_\tau=\sum_{\rho\in\tau(1)}a_{\tau,\rho} u_\rho.
\]
Define the matrix
\[
T_f=\big(a_{\sigma(f,j),\rho} \big)_{1\leq j\leq k\atop \rho\in\sigma(1)},
\] 
so that $T_f(\Delta_\sigma)=\Delta(\sigma,f)$, where $\Delta_\sigma=\conv(\{0\}\cup\{u_\rho\;|\;\rho\in\sigma(1)\})$. Then
\begin{align*}
\vol_{\sigma}(\Delta(\sigma,f))&=|\det(T_f)|\vol_{\sigma}(\Delta_\sigma)\\
&=|\deg(T_f)|\sqrt{\det(G_\sigma)},
\end{align*}
where the second equality follows from \eqref{eq:simplexeuclideanvolume}. By definition, notice that $a_{\sigma(f,j),\rho}=0$ for all $\rho\notin\sigma(f,j)(1)$. It follows that, up to a sign, $\det(T_f)$ is the product of the entries 
\[
\{a_{\sigma(f,j),f(j)}\;|\;j=1,\dots,k\}. 
\]
All of these entries are nonnegative by the pseudo-cubical assumption, so
\[
|\det(T_f)|=|a_{\sigma(f,1),f(1)}\cdots a_{\sigma(f,k),f(k)}|=a_{\sigma(f,1),f(1)}\cdots a_{\sigma(f,k),f(k)}.
\]
Combining the above observations, we have proved that
\[
\Vol_{\sigma}(\Delta(\sigma,f))=\det(G_\sigma)\prod_{j=1}^ka_{\sigma(f,j),f(j)}.
\]
It remains to compute each $a_{\sigma(f,j),f(j)}$.

Recall that $w_{\sigma(f,j)}$ is defined by $j$ linear equations
\[
w_{\sigma(f,j)}*u_\rho=z_\rho\;\;\;\text{ with }\;\;\;\rho\in \sigma(f,j)(1)
\]
Writing
\[
w_{\sigma(f,j)}=\sum_{\rho\in\sigma(f,j)(1)} a_{\sigma(f,j),\rho} u_{\rho},
\]
these linear equations can be encoded in a matrix equation
\[
G_{\sigma(f,j)}\cdot (a_{\sigma(f,j),\rho})_{\rho\in\sigma(f,j)(1)}=(z_\rho)_{\rho\in\sigma(f,j)(1)}.
\]
By Cramer's rule, it then follows that
\begin{equation}\label{eq:positivity0}
a_{\sigma(f,j),\rho}=\frac{\det(G_{\sigma(f,j),\rho}(z))}{\det(G_{\sigma(f,j)})},
\end{equation}
and we conclude that
\[
\Vol_{\sigma}(\Delta(\sigma,f))=\det(G_\sigma)\prod_{j=1}^k\frac{\det(G_{\sigma(f,j),f(j)}(z))}{\det(G_{\sigma(f,j)})},
\]
completing the proof.
\end{proof}

\section{Square-free expressions}

In this section, we derive a formula for products of divisors in Chow rings of simplicial fans as linear combinations of monomials that are square free in the generators and whose coefficients are closely related to the volume computations of the previous section. 

Let $\Sigma\subseteq N_\R$ denote a simplicial $d$-fan, and for any $z\in\R^{\Sigma(1)}$, define
\[
D(z)=\sum_{\rho\in\Sigma(1)}z_\rho X_\rho\in A^1(\Sigma).
\]
Our main result of this section is the following.\footnote{Theorem~\ref{thm:squarefree} as written here is stronger than a result that was written in a preliminary draft of this paper, and the authors thank Chris Eur for an enlightening conversation that led to this strengthening.}

\begin{theorem}\label{thm:squarefree}
For any inner product $*\in\Inn(N_\R)$ and values $z_1,\dots,z_k\in\R^{\Sigma(1)}$, we have
\[
D(z_1)\cdots D(z_k)=\sum_{\sigma\in\Sigma(k)}\Big(\det(G_\sigma)\sum_{f\in L(\sigma)}\prod_{j=1}^k\frac{\det(G_{\sigma(f,j),f(j)}(z_j))}{\det(G_{\sigma(f,j)})}\Big)X_\sigma\in A^\bullet(\Sigma),
\]
where all notation is as in Theorem~\ref{thm:volumecomputation}. Moreover, the coefficients are nonnegative if $z_1,\dots,z_k\in\overline\cC(\Sigma,*)$ and positive if $z_1,\dots,z_k\in\cC(\Sigma,*)$. 
\end{theorem}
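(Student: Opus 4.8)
The plan is to prove the identity by induction on $k$, using the linear relations $\J$ to trade squares for square-free monomials, and then to extract positivity directly from the resulting closed form. The base case $k=1$ is immediate: for $\sigma\in\Sigma(1)$ with $\sigma(1)=\{\rho\}$, the one-by-one matrix $G_{\sigma,\rho}(z_1)$ has determinant $(z_1)_\rho$, so the right-hand side collapses to $\sum_{\rho\in\Sigma(1)}(z_1)_\rho X_\rho=D(z_1)$. (The empty case $k=0$ also works, with the conventions that $X_{\{0\}}=1$ and the empty Gram determinant equals $1$.)

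For the inductive step, assume the formula holds in degree $k-1$ and write $D(z_1)\cdots D(z_{k-1})=\sum_{\tau\in\Sigma(k-1)}c_\tau X_\tau$ with $c_\tau$ the claimed coefficients. The key maneuver is to multiply by $D(z_k)$ one monomial at a time, using a relation chosen to cancel exactly the square terms. Fix $\tau\in\Sigma(k-1)$ and let $w_\tau\in N_{\tau,\R}$ be the $w$-vector of $z_k$, i.e.\ the unique vector with $w_\tau*u_\rho=(z_k)_\rho$ for all $\rho\in\tau(1)$. Using $*$ to identify $M_\R$ with $N_\R$, the element $r=\sum_{\rho\in\Sigma(1)}(w_\tau*u_\rho)X_\rho$ is the image of a generator of $\J$, hence $r=0$ in $A^\bullet(\Sigma)$, so
\[
D(z_k)X_\tau=(D(z_k)-r)X_\tau=\sum_{\rho\in\Sigma(1)}\big((z_k)_\rho-w_\tau*u_\rho\big)X_\rho X_\tau.
\]
By the defining property of $w_\tau$, every coefficient with $\rho\in\tau(1)$ vanishes---precisely cancelling the square monomials $X_\rho^2X_{\tau\setminus\rho}$---while for $\rho\notin\tau(1)$ we have $X_\rho X_\tau=X_\sigma$ when $\sigma=\cone(\tau(1)\cup\{\rho\})\in\Sigma(k)$ and $X_\rho X_\tau=0$ (by $\I$) otherwise. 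Collecting by $\sigma$, the coefficient of $X_\sigma$ in $D(z_1)\cdots D(z_k)$ is
\[
c_\sigma=\sum_{\rho\in\sigma(1)}c_{\sigma\setminus\rho}\big((z_k)_\rho-w_{\sigma\setminus\rho}*u_\rho\big),
\]
where $w_{\sigma\setminus\rho}$ is the $w$-vector of the facet $\sigma\setminus\rho$ with respect to $z_k$.

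It remains to check that the claimed closed form obeys the same recursion. Setting $a_{\pi,\rho}(z)=\det(G_{\pi,\rho}(z))/\det(G_\pi)$---which, by Cramer's rule as in the proof of Theorem~\ref{thm:volumecomputation}, are the coordinates of $w_\pi$ in the basis $\{u_\eta:\eta\in\pi(1)\}$---and grouping the sum over $f\in L(\sigma)$ according to the last ray $f(k)=\rho$, the $k-1$ remaining factors range over $L(\sigma\setminus\rho)$ and reassemble into $c_{\sigma\setminus\rho}/\det(G_{\sigma\setminus\rho})$; using $\det(G_\sigma)a_{\sigma,\rho}(z_k)=\det(G_{\sigma,\rho}(z_k))$ this yields $c_\sigma=\sum_{\rho\in\sigma(1)}\big(\det(G_{\sigma,\rho}(z_k))/\det(G_{\sigma\setminus\rho})\big)c_{\sigma\setminus\rho}$. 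Matching the two recursions reduces to the single identity
\[
(z_k)_\rho-w_{\sigma\setminus\rho}*u_\rho=\frac{\det(G_{\sigma,\rho}(z_k))}{\det(G_{\sigma\setminus\rho})}.
\]
Writing $w_{\sigma\setminus\rho}=\sum_{\eta}a_\eta u_\eta$ so that $G_{\sigma\setminus\rho}\mathbf a=\mathbf z'$ (the restriction of $z_k$ to $(\sigma\setminus\rho)(1)$), the left-hand side equals $(z_k)_\rho-\mathbf g^{\mathsf T}G_{\sigma\setminus\rho}^{-1}\mathbf z'$, where $\mathbf g$ is the off-diagonal part of the $\rho$-column of $G_\sigma$; this is exactly the Schur-complement expansion of the right-hand side, completing the induction.

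Finally, positivity follows from the closed form. Each $\det(G_\sigma)>0$ since the $u_\rho$ are linearly independent, and each factor $a_{\sigma(f,j),f(j)}(z_j)$ is the coordinate of $u_{f(j)}$ in the $w$-vector $w_{\sigma(f,j)}$ of $z_j$. If $z_j\in\overline\cC(\Sigma,*)$ then $w_{\sigma(f,j)}\in\sigma(f,j)$, so this coordinate is nonnegative; if $z_j\in\cC(\Sigma,*)$ then $w_{\sigma(f,j)}\in\sigma(f,j)^\circ$, so it is strictly positive (exactly as in the sign analysis in the proof of Theorem~\ref{thm:volumecomputation}, noting that $\sigma(f,j)$ is a face of some maximal cone). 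Since $L(\sigma)$ is nonempty, the coefficient of $X_\sigma$ is a positive multiple of a sum of nonnegative (resp.\ strictly positive) products, giving the two claims. The main obstacle is the inductive step---in particular, recognizing that subtracting the $\J$-relation attached to the $w$-vector $w_\tau(z_k)$ is exactly what annihilates the square monomials; once this is seen, the identification with the determinant formula is a routine Schur-complement computation.
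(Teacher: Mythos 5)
Your proof is correct and follows essentially the same route as the paper's: induction on $k$, using the linear relations in $\J$ to convert $D(z_k)X_\tau$ into a square-free combination whose coefficients are identified with the Cramer's-rule quotients $\det(G_{\sigma,\rho}(z_k))/\det(G_{\sigma\setminus\rho})$, with positivity read off from the coordinates of the $w$-vectors exactly as in the paper. The only (cosmetic) difference is in packaging: the paper solves the $\J$-system for each $X_{g(i)}$, $g(i)\in\tau(1)$, and expands a determinant along its last column, whereas you subtract the single relation attached to $w_\tau(z_k)$---which kills all square terms at once---and verify the resulting identity by a Schur-complement computation; both are the same linear algebra.
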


\begin{proof}
We prove the formula by induction on $k$, the base case being $k=0$, in which both sides of the equation are $1$. Assume that $k\geq 1$ and that the result is valid for $k-1$. Then
\begin{equation}\label{eq:sqfreeindhyp}
D(z_1)\cdots D(z_k)=\sum_{\tau\in\Sigma(k-1)}\Big(\det(G_\tau)\sum_{g\in L(\tau)}\prod_{j=1}^{k-1}\frac{\det(G_{\tau(g,j),g(j)}(z_j))}{\det(G_{\tau(g,j)})}\Big)X_\tau\sum_{\rho\in\Sigma(1)}z_{k,\rho}X_\rho.
\end{equation}
Given $\tau\in \Sigma(k-1)$ and $g\in L(\tau)$, the definition of the ideal $\J$ leads to the following system of $k-1$ linear equations:
\[
\sum_{i=1}^{k-1}u_{g(j)}*u_{g(i)}X_{g(i)}=-\sum_{\rho\notin\tau(1)}u_{g(j)}*u_\rho X_\rho.
\]
Solving this system using Cramer's rule, we have, for all $i=1,\dots,k-1$,
\[
X_{g(i)}=\frac{-1}{\det(G_\tau)}\sum_{\rho\notin\tau(1)}\det(G_{\tau}\stackrel{i}{\leftarrow}(u_{g(j)}*u_\rho)_j)X_\rho,
\]
where the rows and columns in $G_\tau$ are ordered by the labeling function $g\in L(\tau)$, and $G_{\tau}\stackrel{i}{\leftarrow}(u_{g(j)}*u_\rho)_j$ is the matrix obtained from $G_\tau$ by replacing the $i$th column with $(u_{g(j)}*u_\rho)_j$. It then follows that
\begin{align}\label{eq:sqfreework}
\nonumber X_\tau \sum_{\rho\in\Sigma(1)}z_{k,\rho} X_\rho&=X_\tau\sum_{\rho\notin\tau}X_\rho\Big(z_{k,\rho}-\sum_{i=1}^{k-1}\frac{z_{k,g(i)}\det(G_{\tau}\stackrel{i}{\leftarrow}(u_{g(j)}*u_\rho)_j)}{\det(G_\tau)}\Big)\\
\nonumber&=\sum_{\sigma\in\Sigma(k)\atop \tau\prec\sigma}X_\sigma\Big(z_{k,\rho}-\sum_{i=1}^{k-1}\frac{z_{k,g(i)}\det(G_{\tau}\stackrel{i}{\leftarrow}(u_{g(j)}*u_{\sigma\setminus\tau})_j)}{\det(G_\tau)}\Big)\\
&=\sum_{\sigma\in\Sigma(k)\atop \tau\prec\sigma}X_\sigma\frac{\det(G_{\sigma,\sigma\setminus\tau}(z_k))}{\det(G_\tau)},
\end{align}
where the final equality follows from expanding the numerator in the final expression along the last column. Defining $f\in L(\sigma)$ from $g\in L(\tau)$ by
\[
f(j)=\begin{cases}
g(j) &\text{if }j>k,\\
\sigma\setminus\tau &\text{if }j=k,
\end{cases}
\]
and substituting \eqref{eq:sqfreework} into \eqref{eq:sqfreeindhyp}, we then conclude that
\[
D(z_1)\cdots D(z_k)=\sum_{\sigma\in\Sigma(k)}\Big(\det(G_\sigma)\sum_{f\in L(\sigma)}\prod_{j=1}^k\frac{\det(G_{\sigma(f,j),f(j)}(z_j))}{\det(G_{\sigma(f,j)})}\Big)X_\sigma,
\]
completing the induction step.

To prove the positivity statements, it is enough to argue that
\begin{equation}\label{eq:positivity}
\frac{\det(G_{\sigma,\rho}(z))}{\det(G_{\sigma})}
\end{equation}
is nonnegative when $z\in\overline\cC(\Sigma,*)$ and positive when $z\in\cC(\Sigma,*)$. This follows from the definition of cubical along with the observation, explained in the proof of Theorem~\ref{thm:volumecomputation} (see Equation~\eqref{eq:positivity0}), that the quantity in \eqref{eq:positivity} is equal to the coefficient of $u_\rho$ in the unique expression of $w_\sigma$ as a linear combination in $\{u_\eta \mid \eta\in\sigma(1)\}$.
\end{proof}

\section{Tropical fans and volume polynomials}

In this section, we connect the volume computations of normal complexes to the square-free expression of products in Chow rings, leading to a proof of our main result. The key preliminary fact we require is the following.

\begin{proposition}\label{prop:degreemaps}
Let $\Sigma$ be a simplicial $d$-fan in $N_\R$ and let $\omega:\Sigma(d)\rightarrow\R_{>0}$ be a weight function. Then $(\Sigma,\omega)$ is a tropical fan if and only if there is a well-defined linear degree map
\[
\deg_{\Sigma,\omega}:A^d(\Sigma)\rightarrow\R
\]
satisfying $\deg_{\Sigma,\omega}(X_\sigma)=\omega(\sigma)$ for every $\sigma\in\Sigma(d)$.
\end{proposition}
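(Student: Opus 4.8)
The plan is to translate the statement into a question about the linear relations among the top square-free generators of $A^\bullet(\Sigma)$, and then to show that all such relations are governed by the codimension-one balancing relations.

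First I would record that $A^d(\Sigma)$ is spanned by $\{X_\sigma\mid\sigma\in\Sigma(d)\}$. This is immediate from Theorem~\ref{thm:squarefree}: since $A^\bullet(\Sigma)$ is generated in degree one, $A^d(\Sigma)$ is spanned by the products $X_{\rho_1}\cdots X_{\rho_d}=D(e_{\rho_1})\cdots D(e_{\rho_d})$, each of which the square-free formula rewrites as a linear combination of the $X_\sigma$. (A short computation with the formula, expanding each replaced column along the unit vectors $e_{\rho_j}$ and telescoping, shows in fact that $D(e_{\rho_1})\cdots D(e_{\rho_d})=X_\sigma$ whenever $\{\rho_1,\dots,\rho_d\}$ are the rays of a cone $\sigma\in\Sigma(d)$.) Consequently a linear map $\deg_{\Sigma,\omega}$ with $\deg_{\Sigma,\omega}(X_\sigma)=\omega(\sigma)$ is unique if it exists, and it exists precisely when the assignment $X_\sigma\mapsto\omega(\sigma)$ is consistent with every linear relation $\sum_\sigma c_\sigma X_\sigma=0$ holding in $A^d(\Sigma)$.

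Next I would isolate the fundamental relations. For a wall $\tau\in\Sigma(d-1)$ and a covector $v\in M_\R$ annihilating $N_{\tau,\R}$, the element $x_\tau\ell_v\in\J$, with $\ell_v=\sum_\rho\langle v,u_\rho\rangle x_\rho$, reduces modulo $\I$ to
\[
\sum_{\sigma\in\Sigma(d),\,\tau\prec\sigma}\langle v,u_{\sigma\setminus\tau}\rangle X_\sigma=0\in A^d(\Sigma),
\]
because the terms $x_\tau x_\rho$ with $\rho\in\tau(1)$ vanish by the choice $v\in N_{\tau,\R}^\perp$ and the terms with $\tau\cup\{\rho\}\notin\Sigma$ vanish by $\I$. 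The assignment $X_\sigma\mapsto\omega(\sigma)$ respects all of these relations if and only if $\big\langle v,\sum_{\tau\prec\sigma}\omega(\sigma)u_{\sigma\setminus\tau}\big\rangle=0$ for every such $\tau$ and $v$, that is, if and only if $\sum_{\tau\prec\sigma}\omega(\sigma)u_{\sigma\setminus\tau}\in(N_{\tau,\R}^\perp)^\perp=N_{\tau,\R}$ for all $\tau\in\Sigma(d-1)$, which is exactly the balancing condition \eqref{eq:weighted}. In particular, if $\deg_{\Sigma,\omega}$ exists then it is a linear functional on $A^d(\Sigma)$ and hence respects these relations, so $(\Sigma,\omega)$ is tropical; this gives the forward implication immediately.

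The remaining, and genuinely substantive, implication is that balancing suffices, and it rests on the claim that the wall relations above generate \emph{every} linear relation among the $X_\sigma$. Granting this, balancing guarantees that $X_\sigma\mapsto\omega(\sigma)$ respects all relations, so $\deg_{\Sigma,\omega}$ is well defined. I expect this claim to be the main obstacle. My plan to establish it is a straightening argument: working in $R_d/\I$, whose monomial basis consists of the degree-$d$ monomials supported on cones, I would construct a linear retraction $r$ to $\R^{\Sigma(d)}$ modulo the span $\mathcal{R}$ of the wall relations, by downward induction on the dimension of the support. A monomial supported on a cone $\tau$ with $\dim\tau<d$ is necessarily non-square-free, and choosing $v$ that pairs to $1$ with a repeated ray of $\tau$ and to $0$ with the remaining rays of $\tau(1)$ rewrites it, via $\ell_v=0$, as a combination of monomials whose supports have strictly larger dimension; iterating lands in the span of the $x_\sigma$. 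The delicate point is confluence modulo $\mathcal{R}$—independence of the chosen ray and covector—which I would settle not by tracking choices but by verifying directly that the resulting map $r$ annihilates every generator $m\ell_v$ of $\J$ and vanishes on $\I$, so that it descends to a left inverse of the surjection $\R^{\Sigma(d)}/\mathcal{R}\twoheadrightarrow A^d(\Sigma)$ and forces $\ker\big(\R^{\Sigma(d)}\to A^d(\Sigma)\big)=\mathcal{R}$. Alternatively, this identification of the relations is the simplicial-fan shadow of the Fulton--Sturmfels duality between $A^d(\Sigma)^\vee$ and the group of Minkowski weights, which could be invoked in place of the straightening computation.
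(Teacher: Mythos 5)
Your proposal is correct and follows essentially the same route as the paper: use Theorem~\ref{thm:squarefree} to see that $A^d(\Sigma)$ is spanned by the $X_\sigma$, identify the wall relations $\sum_{\sigma\succ\tau}\langle v,u_{\sigma\setminus\tau}\rangle X_\sigma=0$ for $\tau\in\Sigma(d-1)$ and $v\in N_{\tau,\R}^\perp$, and observe that compatibility of $X_\sigma\mapsto\omega(\sigma)$ with these relations is exactly the balancing condition \eqref{eq:weighted}. The one substantive step you flag---that the wall relations generate \emph{all} relations among the $X_\sigma$---is precisely the step the paper disposes of with ``some moments reflecting'' and a pointer to \cite[Proposition~5.6]{AHK}, so your straightening (or Fulton--Sturmfels) sketch is filling in the same gap rather than taking a different path.
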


\begin{proof}
In the unimodular setting, this result is a special case of \cite[Proposition~5.6]{AHK}, and the proof given there generalizes to the simplicial setting. For the reader's convenience, we outline the ideas here.

Define $Z^d(\Sigma)$ to be the vector subspace of $\R[x_\rho\mid\rho\in\Sigma(1)]$ generated by monomials of the form $x_\sigma$ with $\sigma\in\Sigma(d)$. By Theorem~\ref{thm:squarefree}, every element of $A^d(\Sigma)$ can be written as a linear combination of monomials of the form $X_\sigma$ with $\sigma\in\Sigma(d)$, and it follows that
\[
A^d(\Sigma)=\frac{Z^d(\Sigma)}{(\I+\J)\cap Z^d(\Sigma)}.
\]
Define the linear map
\begin{align*}
\deg_{\Sigma,\omega}:Z^d(\Sigma)&\rightarrow\R\\
x_\sigma&\mapsto\omega(\sigma).
\end{align*}
Then $\deg_{\Sigma,\omega}$ descends to the desired tropical degree map on $A^d(\Sigma)$ if and only if it vanishes on all elements of $(\I+\J)\cap Z^d(\Sigma)$. Some moments reflecting should convince the reader that the subspace $(\I+\J)\cap Z^d(\Sigma)$ is generated by polynomials of the form
\[
x_\tau\sum_{\sigma\in\Sigma(d)\atop \tau\prec\sigma}\langle v,u_{\sigma\setminus\tau}\rangle x_{\sigma\setminus\tau}
\]
where $\tau\in\Sigma(d-1)$ and $v\in (N_\R^\tau)^\perp\subseteq M_\R$. Thus, the tropical degree map exists if and only if, for every $\tau\in\Sigma(d-1)$, we have
\begin{equation}\label{eq:tropdegreemapexist}
\sum_{\sigma\in\Sigma(d)\atop \tau\prec\sigma}\langle v,\omega(\sigma)u_{\sigma\setminus\tau}\rangle=0\;\;\;\text{ for all }\;\;\;v\in (N_{\tau,\R})^\perp.
\end{equation}
Notice that \eqref{eq:tropdegreemapexist} is satisfied for all $\tau\in\Sigma(d-1)$ if and only if the tropical balancing condition is satisfied:
\[
\sum_{\sigma\in\Sigma(d)\atop \tau\prec\sigma}\omega(\sigma)u_{\sigma\setminus\tau}\in N_{\tau,\R}\;\;\;\text{ for all }\;\;\;\tau\in\Sigma(d-1).\qedhere
\]
\end{proof}

We can now prove the main result of this paper. Recall that the volume function for a simplicial tropical $d$-fan $(\Sigma,\omega)$ is defined by
\begin{align*}
\Vol_{\Sigma,\omega}:A^1(\Sigma)&\rightarrow\R\\
D&\mapsto\deg_{\Sigma,\omega}(D^d).
\end{align*}

\begin{theorem}\label{thm:main}
If $(\Sigma,\omega)$ is a simplicial tropical $d$-fan in $N_\R$ and $*\in\Inn(N_\R)$ is an inner product, then for any $D=\sum z_\rho X_\rho\in A^1(\Sigma)$, we have
\[
\Vol_{\Sigma,\omega}(D)=\sum_{\sigma\in\Sigma(d)}\omega(\sigma)\det(G_\sigma)\sum_{f\in L(\sigma)}\prod_{j=1}^k\frac{\det(G_{\sigma(f,j),f(j)}(z))}{\det(G_{\sigma(f,j)})},
\]
where all notation is as in Theorem~\ref{thm:volumecomputation}. In particular, if $z\in\cC(\Sigma,*)$ is pseudo-cubical, then
\[
\Vol_{\Sigma,\omega}(D)=\Vol(C_{\Sigma,*}(z);\omega).
\]
\end{theorem}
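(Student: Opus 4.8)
The plan is to assemble Theorem~\ref{thm:main} directly from three results already in hand: the square-free expansion of Theorem~\ref{thm:squarefree}, the existence of the tropical degree map from Proposition~\ref{prop:degreemaps}, and the volume formula of Theorem~\ref{thm:volumecomputation}. No new geometric input is required; the work is simply to compose these cleanly and to observe that the $*$-dependence appearing on the right cancels against the $*$-independence of the degree map.

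First I would specialize Theorem~\ref{thm:squarefree} to the case $z_1=\cdots=z_d=z$ with $k=d$, obtaining the identity
\[
D^d=\sum_{\sigma\in\Sigma(d)}\Big(\det(G_\sigma)\sum_{f\in L(\sigma)}\prod_{j=1}^d\frac{\det(G_{\sigma(f,j),f(j)}(z))}{\det(G_{\sigma(f,j)})}\Big)X_\sigma
\]
in $A^d(\Sigma)$. Since $(\Sigma,\omega)$ is a tropical fan, Proposition~\ref{prop:degreemaps} guarantees that the linear degree map $\deg_{\Sigma,\omega}$ is well-defined on $A^d(\Sigma)$ and satisfies $\deg_{\Sigma,\omega}(X_\sigma)=\omega(\sigma)$ for every $\sigma\in\Sigma(d)$. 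Applying $\deg_{\Sigma,\omega}$ to both sides and using linearity then yields the first displayed formula of the theorem.

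For the second statement I would assume $z\in\overline\cC(\Sigma,*)$ and invoke Theorem~\ref{thm:volumecomputation}, which identifies, for each $\sigma\in\Sigma(d)$, the bracketed quantity above as $\Vol_\sigma(P_{\sigma,*}(z))$. Substituting this identification into the first formula and comparing with the definition
\[
\Vol(C_{\Sigma,*}(z);\omega)=\sum_{\sigma\in\Sigma(d)}\omega(\sigma)\Vol_\sigma(P_{\sigma,*}(z))
\]
gives the desired equality $\Vol_{\Sigma,\omega}(D)=\Vol(C_{\Sigma,*}(z);\omega)$.

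I do not expect a genuine technical obstacle, since the difficulty has been front-loaded into the earlier results; the one point worth emphasizing is conceptual. The square-free expansion and the volume formula both depend on the inner product $*$ and on the chosen representative $z$ of $D$, yet $\deg_{\Sigma,\omega}(D^d)$ depends only on $D$. The subtle step is therefore not a computation but the \emph{well-definedness} of the degree map applied to this particular $*$-dependent representative of $D^d$, which is exactly the content of Proposition~\ref{prop:degreemaps} and rests on the tropical balancing condition. I would close by noting that it is the balancing hypothesis alone that forces the $*$- and $z$-dependent right-hand side to collapse to a quantity intrinsic to $D$, as this is the ``remarkable'' independence highlighted immediately after the statement.
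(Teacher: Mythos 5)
Your proposal is correct and follows essentially the same route as the paper: apply Theorem~\ref{thm:squarefree} with $z_1=\cdots=z_d=z$ to expand $D^d$ square-freely, apply the tropical degree map (whose existence is Proposition~\ref{prop:degreemaps}) term by term, and then invoke Theorem~\ref{thm:volumecomputation} to identify the resulting coefficients with $\Vol_\sigma(P_{\sigma,*}(z))$ when $z$ is pseudo-cubical. Your closing remark about the balancing condition being the source of the $*$-independence matches the discussion surrounding the theorem in the paper.
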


\begin{proof}
We have
\begin{align*}
\Vol_{\Sigma,\omega}(D)&=\deg_{\Sigma,\omega}(D^d)\\
&=\deg_{\Sigma,\omega}\Bigg(\sum_{\sigma\in\Sigma(d)}\Big(\det(G_\sigma)\sum_{f\in L(\sigma)}\prod_{j=1}^d\frac{\det(G_{\sigma(f,j),f(j)}(z))}{\det(G_{\sigma(f,j)})}\Big)X_\sigma\Bigg)\\
&=\sum_{\sigma\in\Sigma(d)}\omega(\sigma)\det(G_\sigma)\sum_{f\in L(\sigma)}\prod_{j=1}^k\frac{\det(G_{\sigma(f,j),f(j)}(z))}{\det(G_{\sigma(f,j)})},
\end{align*}
where the first equality is the definition of the volume function, the second is an application of Theorem~\ref{thm:squarefree}, and the third is the definition of the tropical degree function. The second statement in the theorem is an immediate application of Theorem~\ref{thm:volumecomputation}.
\end{proof}

\section{Example: Bergman fans of matroids}\label{sec:matroids}

In this final section, we present a rich class of examples of balanced fans arising from matroid theory, called Bergman fans. Our main result is that every Bergman fan of a matroid with arbitrary building set admits an open set of inner products for which the cubical cone is nonempty. Theorem~\ref{thm:main} then provides a geometric interpretation for the volume polynomials of all matroids with respect to arbitrary building sets.

\subsection{Matroids, building sets, and Bergman fans}

A \textbf{matroid} $\sM=(E,\cL)$ consists of a finite set $E$, called the \textbf{ground set}, and a collection of subsets $\cL\subseteq 2^E$, called \textbf{flats}, which satisfy the following two conditions:
\begin{enumerate}
\item if $F_1,F_2$ are flats, then $F_1\cap F_2$ is a flat, and
\item if $F$ is a flat, then every element of $E\setminus F$ is contained in exactly one flat that is minimal among the flats that strictly contain $F$.
\end{enumerate}
For notational simplicity, we assume throughout that all matroids are \textbf{loopless}, meaning that the empty set is a flat. Let $\cL^*$ denote the proper flats of $\sM$---those flats that are neither $\emptyset$ nor $E$.

Given a matroid $\sM=(E,\cL)$, the set $\cL$ is partially ordered by set inclusion. Furthermore, given any subset $S\subseteq E$, it follows from Property (1) that there is a minimal flat containing $S$, called the \textbf{closure} of $S$ and denoted $\cl(S)\in\cL$. Defining the \emph{join} ($\vee$) of two flats to be the closure of their union and the \emph{meet} ($\wedge$) of two flats to be their intersection, it follows from the definitions that the flats $\cL$ form a lattice, called the \textbf{lattice of flats} of $\sM$.

A subset $I\subseteq E$ is called \textbf{independent} if $\cl(I_1)\subset\cl(I_2)$ for any $I_1\subset I_2\subseteq I$. The \textbf{rank} of a subset $S\subseteq E$, denoted $\rk(S)$, is the size of its largest independent subset. The rank of $\sM$ is defined as the rank of $E$. An alternative characterization of the rank of flats is given by lengths of flags. A \textbf{flag} in $\sM$ is an increasing sequence of flats:
\[
\cF=(\emptyset\subset F_1\subset F_2\subset\cdots\subset F_\ell)
\]
The number of nonempty flats in a flag $\cF$ is called the \textbf{length} of the flag, denote $\ell(\cF)$. It can be checked from the above definitions that every maximal flag of flats contained in a flat $F$ has length equal to $\rk(F)$. 

A \textbf{building set on $\sM$} is a subset $\cG\subseteq\cL\setminus\{\emptyset\}$ such that, for any flat $F\in\cL\setminus\{\emptyset\}$ and $\max \cG_{\subseteq F}=\{G_1,\dots,G_k\}$, we have an isomorphism of posets
\begin{align*}
\prod_{i=1}^k[\emptyset,G_i]&\cong [\emptyset, F]\\
(F_1,\dots,F_k)&\mapsto F_1\vee\cdots\vee F_k.
\end{align*}
We assume that all building sets contain $E$ and we set $\cG^*=\cG\cap\cL^*=\cG\setminus\{E\}$. Given a building set $\cG$, a subset $\cN\subseteq\cG$ is called \textbf{nested} if, for any set of pairwise incomparable flats $G_1,\dots,G_\ell\in \cN$ with $\ell\geq 2$, we have $G_1\vee\cdots\vee G_\ell\notin\cG$. Let $\Delta_{\sM,\cG}$ denote the collection of nested sets of $\sM$ with respect to $\cG$, and let $\Delta_{\sM,\cG}^*$ denote the collection of nested sets that do not contain $E$. Since subsets of nested sets are nested, both $\Delta_{\sM,\cG}$ and $\Delta_{\sM,\cG}^*$ naturally have the structure of simplicial complexes. The set $\cL\setminus\emptyset$ is a building set for any matroid $\sM$, which we denote $\cG_{\max}$. With respect to $\cG_{\max}$, it follows from the above definitions that a set of flats is nested if and only if it forms a flag.

Consider the free abelian group $\Z^E$ with basis indexed by $E$. For each subset $S\subseteq E$, define the vector $v_S=\sum_{e\in S}v_e\in\Z^E$. Set $N=\Z^E/\Z v_E$ and for each subset $S\subseteq E$, define $u_S=[v_S]$. The \textbf{Bergman fan of $\sM$ with respect to $\cG$}, denoted $\Sigma_{\sM,\cG}$, is the fan in $N_\R$ with one cone $\sigma_\cN$ indexed by each nested set $\cN\in \Delta_{\sM,\cG}^*$:
\[
\sigma_\cN=\cone(u_G\;|\;G\in \cN).
\]

\begin{example}
Consider the rank $3$ matroid $\sM$ on $E=\{0,1,2,3\}$ with the following lattice of flats (set brackets and commas have been omitted for notational simplicity).
\begin{center}
\begin{tikzpicture}[scale = .5]
\draw[-] (0, 4.7) -- (-4.7, 3.2);
\draw[-] (0, 4.7) -- (-1.9, 3.2);
\draw[-] (0, 4.7) -- (1.9, 3.2);
\draw[-] (0, 4.7) -- (4.7, 3.2);
\node at (0,5.3) {$0123$};
\node at (-5, 2.8) {$01$};
\node at (-2, 2.8) {$02$};
\node at (2, 2.8) {$03$};
\node at (5, 2.8) {$123$};
\draw[-] (-5, 2.3) -- (-5, 0);
\draw[-] (-5, 2.3) -- (-2, 0);
\draw[-] (-2, 2.3) -- (-5, 0);
\draw[-] (-2, 2.3) -- (2, 0);
\draw[-] (2, 2.3) -- (5, 0);
\draw[-] (2, 2.3) -- (-5, 0);
\draw[-] (5, 2.3) -- (5, 0);
\draw[-] (5, 2.3) -- (2, 0);
\draw[-] (5, 2.3) -- (-2, 0);
\node at (-5, -0.4) {$0$};
\node at (-2, -0.4) {$1$};
\node at (2, -0.4) {$2$};
\node at (5, -0.4) {$3$};
\draw[-] (-4.8, -0.8) -- (0, -2.4);
\draw[-] (-1.9, -0.8) -- (0, -2.4);
\draw[-] (1.9, -0.8) -- (0, -2.4);
\draw[-] (4.8, -0.8) -- (0, -2.4);
\node at (0, -2.9) {$\emptyset$};
\end{tikzpicture}
\end{center}
The Bergman fan $\Sigma_{\sM,\cG_{\max}}$ of $\sM$ with respect to the maximal building set $\cG_{\max}$ is depicted in Example~\ref{ex:balancedfan}. The other possible building sets arise from removing some subset of the decomposable flats $\{01,02,03\}$, and the Bergman fans with respect to these building sets are obtained by removing the corresponding subset of the rays $\{\rho_{01},\rho_{02},\rho_{03}\}$ from $\Sigma_{\sM,\cG_{\max}}$.
\end{example}

Bergman fans of matroids with building sets have been studied quite extensively (see, for example, \cite{ArdilaKlivans,FK,FY,FM,FS}). The Bergman fan $\Sigma_{\sM,\cG}$ is unimodular \cite[Proposition~2]{FY}. In addition, given any building set $\cG$, the fan $\Sigma_{\sM,\cG_{\max}}$ can be obtained from $\Sigma_{\sM,\cG}$ by a sequence of stellar subdivisions \cite[Proposition~4.2]{FM}. This fact has two important consequences that are central to our current discussion.
\begin{enumerate}
\item Since $\Sigma_{\sM,\cG_{\max}}$ is pure of dimension $r=\rk(\sM)-1$ (every maximal flag has length $r$), it follows that $\Sigma_{\sM,\cG}$ is pure of dimension $r$ for any building set $\cG$.
\item Since $\Sigma_{\sM,\cG_{\max}}$ is balanced (this follows from the second axiom in the definition of matroids, see \cite[Proposition~3.10]{Huh} for a proof), it then follows from \cite[Lemma~2.11(b)]{AllermanRau} that $\Sigma_{\sM,\cG}$ is also balanced for any building set $\cG$.
\end{enumerate}
Thus, our developments of normal complexes of balanced fans apply in the setting of matroids and Bergman fans. Let
\[
\Vol_{\sM,\cG}\;\;\;\text{ and }\;\;\; C_{\sM,\cG,*}(z),
\]
denote the volume polynomial and the normal complex associated to the Bergman fan $\Sigma_{\sM,\cG}$, where $*\in\Inn(N_\R)$ is any inner product and $z\in\overline\cC(\Sigma_{\sM,\cG},*)$. Theorem~\ref{thm:main} implies that the volume polynomial is computed by
\begin{equation}\label{eq:bergman1}
\Vol_{\sM,\cG}(z)=\sum_{\sigma\in\Sigma_{\sM,\cG}{(r)}}\det(G_\sigma)\sum_{f\in L(\sigma)}\prod_{j=1}^r\frac{\det(G_{\sigma(f,j),f(j)})}{\det(G_{\sigma(f,j)})},
\end{equation}
and that, for any pseudo-cubical value $z\in\overline\cC(\Sigma_{\sM,\cG},*)$, we have
\begin{equation}\label{eq:bergman2}
\Vol_{\sM,\cG}(z)=\Vol(C_{\sM,\cG*}(z)).
\end{equation}

Bergman fans exhibit a great deal of structure, and this structure was recently exploited by Adiprasito, Huh, and Katz \cite{AHK} (for maximal $\cG$) and Ardila, Denham, and Huh \cite{ADH} (for arbitrary $\cG$) to show that matroid Chow rings $A^\bullet(\sM,\cG)=A^\bullet(\Sigma_{\sM,\cG})$ satisfy the K\"ahler package, meaning that they behave in many ways similarly to Chow rings of smooth, projective varieties. In fact, because matroid Chow rings satisfy Poincar\'e duality---which is just one piece of the K\"ahler package---it follows that the volume polynomial $\Vol_{\sM,\cG}(z)$ determines the entire Chow ring $A^\bullet(\sM,\cG)$ \cite[Lemma~13.4.7]{Toric}. In the setting of maximal building sets, volume polynomials have been previously studied, and there are at least two combinatorial formulas for volume polynomials of matroids with respect to $\cG_{\max}$ \cite{Eur,BES,RD}. Equation \eqref{eq:bergman1} provides a continuous family of new formulas for volume polynomials of matroids with arbitrary building sets, one for each choice of inner product.

The initial aim of this work was to introduce volume-theoretic tools into the study of volume polynomials of matroids; in other words, to put the ``volume'' back in ``volume polynomials'' of matroids. In principle, this is accomplished by Equation~\eqref{eq:bergman2}; however, it is not obvious that the hypothesis of \eqref{eq:bergman2} can ever be satisfied. In other words, the cubical hypothesis is a rather restrictive constraint on the choice of $*$ and $z$, and it's not clear that cubical values ever exist. We resolve this issue with the next result.

\begin{proposition}\label{prop:existscubical}
If $\sM=(E,\cL)$ is a matroid and $\cG$ is a building set, then there exists a nonempty open set $U\subseteq\Inn(N_\R)$ such that, for any $*\in U$, we have $\cC(\Sigma_{\sM,\cG},*)\neq\emptyset$. More specifically, if we label the ground set $E=\{e_0,\dots,e_n\}$ and let $*$ be the standard dot product with respect to the basis $u_{e_1},\dots,u_{e_n}\in N_\R$, then there exists a cubical value $z\in\cC(\Sigma_{\sM,\cG},*)$.
\end{proposition}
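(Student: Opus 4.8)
The plan is to reduce the cubical condition to a combinatorial positivity statement, make an explicit choice of $z$, and verify positivity by exploiting the block structure of the Gram matrices of the cones. First I would unwind the definition: the cones of $\Sigma_{\sM,\cG}$ are the simplicial cones $\sigma_\cN$ indexed by nested sets $\cN$, and since the fan is pure (established above) every cone is a face of a maximal one, so $z$ is cubical precisely when $w_{\sigma_\cN}\in\sigma_\cN^\circ$ for every nested $\cN$. Writing $w_{\sigma_\cN}=\sum_{G\in\cN}\lambda_G u_G$, this is equivalent to $\lambda_G>0$ for all $G\in\cN$, and by Cramer's rule (Equation~\eqref{eq:positivity0}, together with $\det(G_{\sigma_\cN})>0$) to $\det(G_{\sigma_\cN,G}(z))>0$ for all $G\in\cN$. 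It therefore suffices to produce $z$ with $z_G>0$ making each such coefficient strictly positive.

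Next I would set up coordinates. After choosing the labeling, identify $N_\R$ with $\R^{E\setminus\{e_0\}}$ so that each $u_{e_i}$ is a standard basis vector; then the $e$-coordinate of $u_G$ is $[e\in G]-[e_0\in G]$, whence
\[
u_G* u_H=|G\cap H|-[e_0\in H]\,|G|-[e_0\in G]\,|H|+[e_0\in G]\,[e_0\in H]\,|E|.
\]
In particular $\|u_G\|^2=|G|$ if $e_0\notin G$ and $\|u_G\|^2=|E|-|G|$ if $e_0\in G$. The choice I would make is $z_G=\|u_G\|$; the motivation is the two-dimensional case, where for a cone with generators $u_F,u_{F'}$ the positivity constraints amount to the ratio $z_F/z_{F'}$ lying in an explicit open interval, and $z_G=\|u_G\|$ places the ratio $\sqrt{\|u_F\|^2/\|u_{F'}\|^2}$ strictly inside that interval for free by positive-definiteness of the Gram matrix.

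The engine of the general argument is an orthogonal block decomposition governed by $e_0$. Fix a nested set and split $\cN=\cN_0\sqcup\cN_1$ according to whether $e_0\notin G$ or $e_0\in G$; the formula above shows that the cross terms $u_G* u_H$ with $G\in\cN_0$, $H\in\cN_1$ are $\le 0$, and vanish when $G\subseteq H$. For the maximal building set every nested set is a chain $F_1\subsetneq\cdots\subsetneq F_k$ and the $e_0$-containing flats form a terminal segment, so all cross terms vanish and $G_{\sigma_\cN}$ is orthogonally block diagonal: the $\cN_0$-block is the ``min matrix'' $(|F_{\min(i,j)}|)$ and the $\cN_1$-block is the ``max matrix'' $(|E|-|F_{\max(i,j)}|)$. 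Solving each block by telescoping consecutive differences, the coefficients emerge as differences of terms of the form $1/(\sqrt{x}+\sqrt{y})$, which are strictly positive exactly because $t\mapsto\sqrt{t}$ is strictly concave and increasing. This settles the maximal building set with $z_G=\|u_G\|$.

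The main obstacle is the general building set, where nested sets are forests rather than chains, so the two parts $\cN_0,\cN_1$ need not themselves be chains and neither the clean min/max structure nor the orthogonality between $\cN_0$ and $\cN_1$ survives in general. Here I would argue by induction on $|\cN|$ using the incremental form of the cubical condition from Proposition~\ref{prop:charcub} --- namely $u_H* w_{\sigma_{\cN'}}<\|u_H\|$ whenever $\cN'\cup\{H\}$ is nested --- and exploit the forest structure of nested sets together with the poset-product decomposition in the definition of a building set to reduce an incomparable configuration of flats to independent chain computations governed by the same square-root concavity. Controlling the (nonpositive) cross terms so that they never destroy positivity of the coefficients for an arbitrary nested set is the crux, and is where I expect the bulk of the work to lie.
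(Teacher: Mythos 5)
Your reduction of the cubical condition to strict positivity of the coefficients $\lambda_G$ in $w_{\sigma_\cN}=\sum_{G\in\cN}\lambda_G u_G$ for every nested set $\cN$ is correct and is exactly the reduction the paper makes, and your computation of the Gram data (the coordinates of $u_G$, the formula for $u_G*u_H$, and $\|u_G\|^2=|G|$ or $|G^c|$) is accurate. Your treatment of the maximal building set also checks out: for a chain the $e_0$-free and $e_0$-containing parts are orthogonal, the blocks are min/max matrices, and telescoping with $z_G=\|u_G\|$ gives differences of terms $1/(\sqrt{x}+\sqrt{y})$ that are strictly positive. This is a genuinely different mechanism from the paper's: the paper takes $z_G=|G|-m^{-|G^c|}$ (resp.\ $|G^c|-m^{-|G|}$) for $m\gg 0$, where the unperturbed value $z_G=|G|$ would land $w_\cN$ on the boundary of the cone and the small perturbation pushes it strictly inside, whereas your square-root choice gets strict positivity from strict concavity.

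The gap is the one you name yourself: the general building set. For arbitrary $\cG$ a nested set is a forest, the parts $\cN_0$ and $\cN_1$ are no longer orthogonal (incomparable $G\in\cN_0$, $H\in\cN_1$ give $u_G*u_H=-|G|<0$), and the clean block-telescoping argument does not apply. This case is not a routine extension --- it is the substantive content of the proposition and occupies essentially all of the paper's proof, which sets up the full linear system for a forest, uses the fact that incomparable elements of a nested set are disjoint, and performs a delicate elimination (summing the equations over all $H$ with $H_+=G_+$, subtracting consecutive equations along chains of the forest, and verifying that the resulting denominators such as $|G|-\sum_{H_+=G}|H|$ are positive integers precisely because the nested condition forbids the join from lying in $\cG$). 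Your sketch of ``induction on $|\cN|$ exploiting the forest structure'' does not carry this out, and it is moreover not established that $z_G=\|u_G\|$ even works for general nested sets once the nonpositive cross terms enter; a different choice of $z$ (such as the paper's perturbed cardinalities) may be needed to make the elimination close. As written, the proposal proves the proposition only for $\cG=\cG_{\max}$.
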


The proof of this proposition requires one important property of nested sets, which is that any two incomparable elements of a nested set are disjoint. This property can be checked from the definitions above, or a proof can be found in \cite[Section~2]{FK}.

\begin{proof}[Proof of Proposition~\ref{prop:existscubical}]
It follows from the definitions that the existence of a cubical value is an open condition on $\Inn(N_\R)$; therefore, the first statement in the proposition follows from the second. Label the ground set $E=\{e_0,\dots,e_n\}$ and let $*$ be the standard dot product with respect to the basis $u_{e_1},\dots,u_{e_n}\in N_\R$. By definition, note that
\[
u_{e_0}=-\sum_{i=1}^nu_{e_i}.
\]
Choose some $m\gg 0$ and for every $G\in\cG^*$, set
\[
z_G=\begin{cases}
|G|-m^{-|G^c|} &\text{if }e_0\notin G\\
|G^c|-m^{-|G|} &\text{if }e_0\in G.
\end{cases}
\]
We claim that $z\in\cC(\Sigma_{\sM,\cG},*)$. In order to verify this, we must prove that, for each nested set $\cN$, we have $w_\cN=w_{\sigma_\cN}\in\sigma_{\cN}^\circ.$ 

Fix a nested set $\cN$ and write
\[
w_\cN=\sum_{G\in\cN} a_{\cN,G}u_{G}.
\]
We must prove that $a_{\cN,G}>0$ for all $G\in\cN$. The coefficients $ a_{\cN,G}$ are determined by the linear equations
\[
w_\cN *u_{G}=z_{G}\;\;\;\text{ for all }\;\;\;G\in\cN.
\]
In order to write these linear equations more explicitly, notice that, for $G_1,G_2\in\cG^*$, we have
\[
u_{G_1} * u_{G_2}=\begin{cases}
|G_1\cap G_2| &\text{if }e_0\notin G_1\text{ and }e_0\notin G_2;\\
-|G_1\cap G_2^c| &\text{if }e_0\notin G_1\text{ and }e_0\in G_2;\\
|G_1^c\cap G_2^c| &\text{if }e_0\in G_1\text{ and }e_0\in G_2.
\end{cases}
\]

We now fix notation that will be useful in the argument. Let $\cN_0\subseteq\cN$ be the subset of flats containing $e_0$. Since incomparable elements of $\cN$ are disjoint, $\cN_0$ is totally ordered; let $G_0$ denote the minimal element of $\cN_0$. For each $G\in\cN$, let $\widehat G$ be the minimal flat in $\cN_0\cup\{E\}$ that contains $G$. Using this notation, the linear equations defining $w_\cN$ become
\begin{equation}\label{eq:linear1}
\sum_{F\in\cN\atop F\subset G}|F| a_{\cN,F}+|G|\sum_{F\in\cN\atop G\subseteq F\subset \widehat G} a_{\cN,F}-|G|\sum_{F\in\cN\atop G_0\subseteq F\subset\widehat G} a_{\cN,F}=|G|-m^{-|G^c|} \;\;\;\text{ if }\;\;\; G\notin\cN_0
\end{equation}
and
\begin{equation}\label{eq:linear2}
|G^c|\sum_{F\in\cN\atop G_0\subseteq F\subset G} a_{\cN,F}+\sum_{F\in\cN\atop G\subseteq F}|F^c| a_{\cN,F}-\sum_{F\in\cN\setminus\cN_0 \atop G\subset \widehat F}|F| a_{\cN,F}=|G^c|-m^{-|G|} \;\;\;\text{ if }\;\;\; G\in\cN_0.
\end{equation}

For any $G\in\cN$, let $G_+$ denote the minimal element of $\cN\cup\{E\}$ strictly containing $G$. Consider some $G\in\cN_0$ with $G_+\neq E$. If we subtract Equation~\eqref{eq:linear2} for $G_+$ from Equation~\eqref{eq:linear2} for $G$, we obtain the equation
\begin{equation}\label{eq:linear3}
(|G^c|-|G_+^c|)\sum_{F\in\cN\atop G_0\subseteq F\subseteq G} a_{\cN,F}-\sum_{F\in\cN\setminus\cN_0\atop \widehat F=G_+}|F|a_{\cN,F}=|G^c|-|G_+^c|-m^{-|G|}+m^{-|G_+|}.
\end{equation}
Notice that every $F\in\cN\setminus\cN_0$ with $\widehat F=G_+$ is a subset of a unique $H\in\cN\setminus\cN_0$ with $H_+=G_+$. Therefore, summing Equation~\eqref{eq:linear1} for all $H\in\cN\setminus\cN_0$ with $H_+=G_+$, we obtain the equation
\begin{equation}\label{eq:linear4}
\sum_{F\in\cN\setminus\cN_0\atop \widehat F=G_+}|F|a_{\cN,F}-\sum_{H\in\cN\setminus\cN_0\atop H_+=G_+}|H|\sum_{F\in\cN\atop G_0\subseteq F\subseteq G}a_{\cN,F}=\sum_{H\in\cN\setminus\cN_0\atop H_+=G_+}(|H|-m^{-|H^c|}).
\end{equation}
Substituting \eqref{eq:linear4} into \eqref{eq:linear3} and simplifying, it follows that, for any $G\in\cN_0$, we have
\begin{equation}\label{eq:linear5}
\sum_{F\in\cN\atop G_0\subseteq F\subseteq G} a_{\cN,F}=
\begin{cases}
1-\frac{m^{-|G|}-m^{-|G_+|}-\sum_{H\in\cN\setminus\cN_0\atop H_+=G_+}m^{-|H^c|}}{|G^c|-|(G_+)^c|-\sum_{H\in\cN\setminus\cN_0\atop H_+=G_+}|H|} &\text{ if }G_+\neq E\\
1-\frac{m^{-|G|}}{|G^c|} &\text{ if }G_+= E,
\end{cases}
\end{equation}
where the second equation simply follows from \eqref{eq:linear2} applied to the unique maximal element $G\in\cN_0$ with $G_+=E$.

Since $\cN$ is a nested set, it follows that the denominator in \eqref{eq:linear5} is always a positive integer; more specifically, this follows from the fact that $\{G\}\cup\{H\in\cN\setminus\cN_0\mid H_+=G_+\}$ is a collection of pairwise incomparable elements of $\cN$ that are all subsets of $G_+$, so they must be pairwise disjoint subsets of $G_+$, and their union cannot be all of $G_+$ or else their join would be equal to $G_+\in\cG$, contradicting the nested condition. For $m\gg 0$, notice that the leading term in the quotient in \eqref{eq:linear5}  is $-m^{-|G|}$, from which it follows that the right-hand side of \eqref{eq:linear5} is increasing with respect to $G$. Thus, taking successive differences to solve for each $a_{\cN,G}$, we conclude that $ a_{\cN,G}>0$ for all $G\in\cN_0$ with $G\neq G_0$. For $G=G_0$, notice that the right-hand side of \eqref{eq:linear5} is positive for $m\gg 0$ simply because the quotient is very small, implying that $a_{\cN,G_0}>0$. Thus, we conclude that $a_{\cN,G}>0$ for all $G\in\cN_0$.

Suppose now that $G\notin\cN_0$. Then taking Equation~\eqref{eq:linear1} for $G$ and subtracting from it Equation~\eqref{eq:linear1} for all $H\in\cN$ with $H_+=G$, we obtain the equation
\[
\Big(|G|-\sum_{H\in\cN\atop H_+=G}|H|\Big)\Big(\sum_{F\in\cN\atop G\subseteq F\subset\widehat G} a_{\cN,F}-\sum_{F\in\cN\atop G_0\subseteq F\subset \widehat G}a_{\cN,F}\Big)=|G|-m^{-|G^c|}-\sum_{H\in\cN\atop H_+=G}\big(|H|-m^{-|H^c|}\big).
\]
 Simplifying, we may write
 \begin{equation}\label{eq:linear6}
 \sum_{F\in\cN\atop G\subseteq F\subset\widehat G} a_{\cN,F}=\Big(1+\sum_{F\in\cN\atop G_0\subseteq F\subset \widehat G}a_{\cN,F}\Big)-\frac{m^{-|G^c|}-\sum_{H\in\cN\atop H_+=G}m^{-|H^c|}}{|G|-\sum_{H\in\cN\atop H_+=G}|H|}
 \end{equation}
 
As in the previous case, the denominator in the second term of the right-hand side of Equation~\eqref{eq:linear6} is positive because $\cN$ is a nested set.  For $m\gg 0$, notice that $-m^{-|G^c|}$ is the leading term of the quotient in Equation~\eqref{eq:linear6}, from which it follows that the right-hand side of \eqref{eq:linear4} is decreasing with respect to $G$. Therefore, taking successive differences to solve for each $a_{\cN,G}$, we see that $a_{\cN,G}>0$ for all $G\notin\cN$ with $G_+\neq\widehat G$. In the case that $G_+=\widehat G$, then $ a_{\cN,G}$ is the only term in the left-hand side of \eqref{eq:linear6}, and the fact that $a_{\cN,F}>0$ for all $F\in\cN_0$, which is what we already argued above, then implies that the right-hand side of \eqref{eq:linear6} is positive for $m\gg 0$. Thus, we conclude that $a_{\cN,G}>0$ for all $G\notin\cN_0$, finishing the proof of the proposition.
\end{proof}

\bibliographystyle{alpha}

\newcommand{\etalchar}[1]{$^{#1}$}

\end{document}